\crefname{assumption}{Assumption}{Assumptions}
\definecolor{verylightgray}{rgb}{0.85,0.85,0.85}
\definecolor{boxred}{rgb}{0.7,0,0}
\newcommand*{\arXiv}[1]{\bgroup\color{blue}\href{https://arxiv.org/abs/#1}{arXiv:#1}\egroup}
\newcommand*{\doi}[1]{\bgroup\color{blue}\href{https://doi.org/#1}{doi:#1}\egroup}
\newcommand*{\email}[1]{\bgroup\color{blue}\href{mailto:#1}{#1}\egroup}
\renewcommand*{\url}[1]{\bgroup\color{blue}\href{#1}{#1}\egroup}
\setlist[enumerate]{nosep}
\setlist[itemize]{nosep}
\renewcommand{\qedsymbol}{$\blacksquare$}
\renewenvironment{proof}[1][\proofname]{\noindent{\bfseries\sffamily #1.} }{\hfill\qedsymbol\medskip}
\let\oldtitle\title
\renewcommand{\title}[1]{\oldtitle{#1}\newcommand{\theshorttitle}{#1}}
\newcommand{\shorttitle}[1]{\renewcommand{\theshorttitle}{#1}}
\let\oldauthor\author
\renewcommand{\author}[1]{\oldauthor{#1}\newcommand{\theshortauthor}{#1}}
\newcommand{\shortauthor}[1]{\renewcommand{\theshortauthor}{#1}}
\newcommand{\theabstract}[1]{\par\bgroup\noindent\textbf{\textsf{Abstract.}} #1\egroup}
\newcommand{\thekeywords}[1]{\par\smallskip\bgroup\noindent\textbf{\textsf{Keywords.}}\newcommand{\and}{ $\bullet$ } #1\egroup}
\newcommand{\themsc}[1]{\par\smallskip\bgroup\noindent\textbf{\textsf{2020 Mathematics Subject Classification.}}\newcommand{\and}{ $\bullet$ } #1\egroup}
\newcommand*{\affilref}[1]{\ref{affiliation#1}}
\newcommand*{\affiliation}[3]{
	\footnotetext[#1]{\label{affiliation#2} #3}
}
\numberwithin{equation}{section}
\numberwithin{figure}{section}
\numberwithin{table}{section}
\newtheorem{theorem}{\sffamily Theorem}[section]
\newtheorem{corollary}[theorem]{\sffamily Corollary}
\newtheorem{proposition}[theorem]{\sffamily Proposition}
\newtheorem{lemma}[theorem]{\sffamily Lemma}
\theoremstyle{definition}
\newtheorem{definition}[theorem]{\sffamily Definition}
\newtheorem{remark}[theorem]{\sffamily Remark}
\newtheorem{example}[theorem]{\sffamily Example}
\newtheorem{conjecture}[theorem]{\sffamily Conjecture}
\newtheorem{assumption}[theorem]{\sffamily Assumption}
\newtheorem{notation}[theorem]{\sffamily Notation}
\crefname{conjecture}{Conjecture}{Conjectures}
\crefname{question}{Question}{Questions}
\newcommand*{\defeq}{\coloneqq}
\newcommand*{\defterm}{\textbf}
\DeclareMathOperator{\range}{\mathrm{ran}}
\DeclareMathOperator{\supp}{\mathrm{supp}}
\DeclareMathOperator{\spn}{\mathrm{span}}
\renewcommand{\emptyset}{\varnothing}
\renewcommand{\geq}{\geqslant}
\renewcommand{\leq}{\leqslant}
\newcommand*{\Gammalim}{\mathop{\ensuremath{\Gamma}\text{-}\mathrm{lim}}}
\newcommand*{\Naturals}{\mathbb{N}}
\newcommand*{\one}{\mathds{1}}
\newcommand*{\quark}{\setbox0\hbox{$x$}\hbox to\wd0{\hss$\cdot$\hss}}
\newcommand*{\vdotsquark}{\setbox0\hbox{$xx$}\hbox to\wd0{\hss$\vdots$\hss}}
\newcommand*{\Reals}{\mathbb{R}}
\newcommand*{\eReals}{\overline{\Reals}}
\newcommand*{\bP}{\mathbb{P}}
\newcommand*{\bE}{\mathbb{E}}
\newcommand*{\rd}{\mathrm{d}}
\newcommand*{\Torus}{\mathbb{T}}
\newcommand*{\ud}{\, \rd}
\newcommand*{\cJ}{\mathcal{J}}
\newcommand*{\logden}{\mathfrak{q}}
\newcommand*{\Id}{\textup{Id}}
\newcommand*{\Ball}[2]{\bgroup \color{green} B_{#2}(#1) \egroup}
\newcommand*{\cBall}[2]{B_{#2}(#1)}
\newcommand*{\BallExtra}[3]{\bgroup \color{green} B_{#2}^{#3}(#1) \egroup}
\newcommand*{\cBallExtra}[3]{\bar{B}_{#2}^{#3}(#1)}
\newcommand*{\cylBall}[3]{B_{#2}^{#3}(#1)}
\newcommand*{\rv}[1]{\boldsymbol{#1}}
\newcommand*{\prob}[1]{\mathcal{P}(#1)}
\newcommand*{\Cauchy}{\mathcal{C}}
\newcommand{\Borel}[1]{\mathcal{B}(#1)}
\newcommand{\absval}[1]{\lvert #1 \rvert}
\newcommand{\norm}[1]{\lVert #1 \rVert}
\newcommand{\set}[2]{\{ #1 \mid #2 \}}
\newcommand{\bignorm}[1]{\bigl\Vert #1 \bigr\Vert}
\newcommand{\Absval}[1]{\left\vert #1 \right\vert}
\newcommand{\Norm}[1]{\left\Vert #1 \right\Vert}
\newcommand{\Set}[2]{\left\{ #1 \,\middle\vert\, #2 \right\}}
\newcommand{\todo}[1]{\bgroup\color{red}TODO: #1\egroup}
\newcommand{\ba}[1]{\bgroup\color{olive}Birzhan: #1\egroup}
\newcommand{\tjs}[1]{\bgroup\color{cyan}Tim: #1\egroup}
\newcommand{\hcl}[1]{\bgroup\color{brown}Han: #1\egroup}
\newcommand{\ik}[1]{\bgroup\color{violet}Ilja: #1\egroup}
\newcommand{\change}[1]{\bgroup\color{olive}#1\egroup}
\DeclareAcronym{BIP}{short=BIP, long=Bayesian inverse problem}
\DeclareAcronym{DFG}{short=DFG, long=Deutsche Forschungs\-gemein\-schaft}
\DeclareAcronym{MAP}{short=MAP, long=maximum a posteriori}
\DeclareAcronym{OM}{short=OM, long=Onsager--Machlup}
\renewcommand{\change}[1]{#1}
\title{\texorpdfstring{$\boldsymbol{\mathsf{\Gamma}}$}{Gamma}-convergence of Onsager--Machlup functionals}
\shorttitle{$\mathsf{\Gamma}$-convergence of Onsager--Machlup functionals: Part II}
\author{%
	Birzhan~Ayanbayev\textsuperscript{\affilref{Warwick}}%
	\and
	Ilja~Klebanov\textsuperscript{\affilref{FUB}}%
	\and
	Han Cheng Lie\textsuperscript{\affilref{Potsdam}}%
	\and
	T.~J.~Sullivan\textsuperscript{\affilref{Warwick}}%
}
\begin{document}

\maketitle

\affiliation{1}{Warwick}{Mathematics Institute and School of Engineering, University of Warwick, Coventry, CV4 7AL, United Kingdom (\email{birzhan.ayanbayev@warwick.ac.uk}, \email{t.j.sullivan@warwick.ac.uk})}
\affiliation{2}{FUB}{Freie Universit{\"a}t Berlin, Arnimallee 6, 14195 Berlin, Germany (\email{klebanov@zedat.fu-berlin.de})}
\affiliation{3}{Potsdam}{Institut f\"ur Mathematik, Universit\"at Potsdam, Campus Golm, Haus 9, Karl-Liebknecht-Stra{\ss}e 24--25, Potsdam OT Golm 14476, Germany (\email{hanlie@uni-potsdam.de})}

\begin{abstract}
	\theabstract{We derive Onsager--Machlup functionals for countable product measures on weighted $\ell^{p}$ subspaces of the sequence space $\Reals^{\Naturals}$.
Each measure in the product is a shifted and scaled copy of a reference probability measure on $\Reals$ that admits a sufficiently regular Lebesgue density.
We study the equicoercivity and $\Gamma$-convergence of sequences of Onsager--Machlup functionals associated to convergent sequences of measures within this class.
We use these results to establish analogous results for probability measures on separable Banach or Hilbert spaces, including Gaussian, Cauchy, and Besov measures with summability parameter $1 \leq p \leq 2$.
Together with Part~I of this paper, this provides a basis for analysis of the convergence of maximum a posteriori estimators in Bayesian inverse problems and most likely paths in transition path theory.
}
	\thekeywords{Bayesian inverse problems%
\and%
$\Gamma$-convergence%
\and%
maximum a posteriori estimation%
\and%
Onsager--Machlup functional%
\and%
small ball probabilities%
\and%
transition path theory
	\themsc{49Q20
\and
60B11
\and
49J45
\and
49K40
\and
62F15
}
\end{abstract}

\section{Introduction}
\label{section:Introduction}

A \ac{MAP} estimator is an important feature of a \ac{BIP} because of its interpretation as a \emph{mode} of the posterior distribution, i.e.\ as a point in parameter space $X$ to which the posterior assigns the most mass, relative to other points.
This interpretation is only heuristic, because even in the straightforward case that the parameter space has finite dimension and the posterior admits a Lebesgue density, every point will have measure zero.
To make the interpretation rigorous, one can consider --- for a given probability measure $\mu$ on $X$ --- the behaviour of ratios of small ball probabilities $\tfrac{ \mu ( \cBall{x_{1}}{r} ) }{ \mu ( \cBall{x_{2}}{r} ) }$ for infinitesimally small $r$ and for any two parameters $x_{1}, x_{2} \in X$.
Intuitively, if $x_{2}$ is a mode of $\mu$, then, for any $x_{1}$, the limit superior of this ratio must be less than or equal to 1.

In Part~I of this paper \citep{AyanbayevKlebanovLieSullivan2021_I}, we called any $x_{2}$ that satisfies the limit superior inequality in the previous paragraph a \emph{global weak mode} of $\mu$, and showed that, under certain assumptions, a point is a global weak mode if and only if it minimises an \ac{OM} functional $I_{\mu} \colon X \to \eReals$ of $\mu$.
In practice, the full posterior is not accessible and must be approximated, and we also analysed the convergence behaviour of the modes associated to an arbitrary collection $\set{ \mu^{(n)} }{ n \in \Naturals \cup \{ \infty \} }$ of measures defined on a metric space $X$, where $\mu^{(\infty)}$ plays the role of the full posterior and $(\mu^{(n)})_{n\in\Naturals}$ plays the role of a sequence of approximate posteriors.
Our findings were as follows:
\begin{enumerate}[label = (\alph*)]
	\item
	\label{item:Correspondence_OMminimisers_modes}
	If (extended) \ac{OM} functionals $I_{\mu^{(n)}} \colon X \to \eReals$ exist for each $n \in \Naturals \cup \{ \infty \}$ and $(I_{\mu^{(n)}})_{n \in \Naturals}$ is an equicoercive sequence with $\Gammalim_{n \to \infty} I_{\mu^{(n)}} = I_{\mu^{(\infty)}}$, then minimisers of $I_{\mu^{(n)}}$ converge (up to taking subsequences) to a minimiser of $I_{\mu^{(\infty)}}$ \citep[Section~4]{AyanbayevKlebanovLieSullivan2021_I}. 
	\item 
	Since modes of $\mu^{(n)}$ are minimisers of their \ac{OM} functionals, it follows that modes converge (up to taking subsequences) to a mode of $\mu^{(\infty)}$ \citep[Section~4]{AyanbayevKlebanovLieSullivan2021_I}. 
	\item
	\label{item:posterior_properties_follow_from_prior_properties}
	Suppose that the measures $\mu^{(n)}$, $n \in \Naturals \cup \{ \infty \}$, are posteriors given by Radon--Nikodym derivatives (cf.\ \citealt{Stuart2010})
	\[
		\frac{\rd \mu^{(n)}}{\rd \mu_{0}^{(n)}}
		\propto
		\exp(-\Phi^{(n)}),
	\]
	where $\Phi^{(n)} \colon X \to \Reals$ are the potentials (negative log-likelihoods) and $\mu_{0}^{(n)}$ are the priors, $n \in \Naturals \cup \{ \infty \}$.
	Under rather weak assumptions on the $\Phi^{(n)}$, if the conditions in \ref{item:Correspondence_OMminimisers_modes} hold for the priors, then they also hold for the posteriors.
	In particular, the existence of the \ac{OM} functionals $I_{\mu^{(n)}}$ for the posteriors follows from the existence of the \ac{OM} functionals for the priors \citep[Section~6]{AyanbayevKlebanovLieSullivan2021_I}. 
\end{enumerate}

In principle, establishing $\Gamma$-convergence and equicoercivity would require explicit formulae for the \ac{OM} functionals of the posteriors, and such formulae can be difficult to obtain.
Fortunately, by \ref{item:posterior_properties_follow_from_prior_properties}, we only need to prove $\Gamma$-convergence and equicoercivity for the \ac{OM} functionals of the priors and continuous convergence of the potentials.
Indeed, for some commonly-used priors, the \ac{OM} functionals of the priors have a simple form and the requisite $\Gamma$-convergence and equicoercivity calculations can be performed more-or-less explicitly.

In Part~I of this paper \citep{AyanbayevKlebanovLieSullivan2021_I}, we determined \ac{OM} functionals and proved \ref{item:Correspondence_OMminimisers_modes} for possibly degenerate Gaussian measures, as well as for Besov-$1$ measures.
In this paper, we aim to do the same for a rather large class of countable product measures defined on weighted sequence spaces.
This class of measures consists of countable products of scaled and shifted copies of a reference probability measure $\mu_{0}$ on $\Reals$, where $\mu_{0}$ admits a sufficiently regular Lebesgue density.
The class includes Gaussian measures, Cauchy measures, and Besov-$p$ measures for $1 \leq p \leq 2$.
The precise description of this class is given in \Cref{assump:basic_assumptions_for_product_measures}.

The first main contribution of this paper, \Cref{thm:OM_for_product_measures}, shows the existence of and derives an explicit formula for \ac{OM} functionals of measures in this class under another technical assumption.
The second main contribution is to prove equicoercivity and $\Gamma$-convergence of \ac{OM} functionals associated to a convergent sequence in this class, where convergence is meant in the sense of convergence of the scale and shift sequences, and convergence of the Lebesgue densities of the reference probability measures:
see \Cref{thm:Equicoercivity_for_product_measures,thm:Gamma_convergence_for_product_measures}.
As concrete examples, we consider Besov-$p$ measures for $1 \leq p \leq 2$, and Cauchy measures.
Since Bayesian inference is often performed on infinite-dimensional separable Banach or Hilbert spaces, we also translate the results from the weighted sequence space setting to the separable Banach or Hilbert space setting.

The main challenge in this work is proving the existence of the extended \ac{OM} functionals.
In this paper, we consider two approaches for this.
The first approach, which we call the \emph{continuity approach}, considers shifted measures $\mu_{h}(\quark) \defeq \mu(\quark - h)$ and the corresponding Radon--Nikodym derivatives $r_{h}^{\mu} \defeq \frac{\rd \mu_{h}}{\rd \mu}$, whenever they exist.
The main idea of this approach, which has previously been used by \citet{HelinBurger2015} and \citet{AgapiouBurgerDashtiHelin2018}, is to consider the negative logarithm of the function $E\ni h\mapsto r_{-h}^{\mu}(u_{\ast})$, where $u_{\ast}$ is some suitable reference point, and  $E\subseteq X$ is a subset on which $r_{h}^{\mu}$ is continuous and may depend on the reference point $u_{\ast}$. 
We make some contributions to this approach.
Ultimately, we do not use it for the derivation of our main results, because proving continuity on a sufficiently large subset $E\subseteq X$ turns out to be more challenging than using a different approach.

The second approach, which we call the \emph{direct approach}, avoids considering continuity of $r_{-h}^{\mu}$, and directly addresses the limit of the ratio $\frac{ \mu ( \cBall{x_{1}}{r} ) }{ \mu ( \cBall{x_{2}}{r} ) }$ as $r \searrow 0$ to derive the \ac{OM} functional of $\mu$ on a sufficiently large subset $E\subseteq X$.
By removing the constraint on $E$ that $r_{-h}^{\mu}$ must be continuous on $E$, we can prove a formula for the \ac{OM} functional using this direct approach, for the class of probability measures mentioned above.

We emphasise, however, that in both approaches it is important to consider points in $X \setminus E$ with great care.
In the direct approach, we achieve this by proving a property $M(\mu,E)$ which guarantees that we do not miss any modes outside of $E$.

The structure of the paper is as follows.
In \Cref{sec:related} we discuss related work.
\Cref{sec:notation} introduces key notation and concepts, including the formal definition of the \ac{OM} functional.
In \Cref{sec:convergence_of_OM_functionals}, we present the main results of this paper, namely the derivation of \ac{OM} functionals of certain product measures on the sequence space $\Reals^{\Naturals}$ as well as the $\Gamma$-convergence and equicoercivity properties of sequences of such measures (and the images of such measures in Hilbert and Banach spaces).
In \Cref{sec:closing}, we summarise the results of the paper and suggest some directions for future work.
We collect auxiliary results in \Cref{sec:equivalence} and state technical proofs in \Cref{sec:technical}.

\section{Overview of related work}
\label{sec:related}

\Ac{OM} functionals have been extensively studied in the context of stochastic processes defined by stochastic differential equations;
see e.g.\ \cite[Chapter~7]{Ledoux1996} and the references therein.
However, $\Gamma$-convergence does not appear to have been considered in this context until the work of \citet{Pinski2012}.
In their work, $\Gamma$-convergence tools were used to study the minimisers of \ac{OM} functionals in the zero temperature limit.
\citet{Lu2017a} considered optimal Gaussian approximations of the law of a diffusion process with respect to the Kullback--Leibler divergence using $\Gamma$-convergence, and studied the relationship between the \ac{OM} functional and the so-called Freidlin--Wentzell rate functional.
Some examples of recent work that further investigate this relationship include \citep{Du2021,Li2021}.

\Ac{OM} functionals have only recently been studied in the context of \ac{BIP}s and their \ac{MAP} estimators, beginning with the seminal work of \citet{DashtiLawStuartVoss2013}, and continuing with \citep{HelinBurger2015,DunlopStuart2016,Clason2019GeneralizedMI}, for example.
The importance of the \ac{OM} functional in this context is that its minimisers are the modes (\ac{MAP} estimators) of the posterior measure.
However, these works establish \ac{OM} functionals only for very few measures and do not consider $\Gamma$-convergence, as they only study a single fixed posterior measure instead of a sequence of such measures.
As far as we are aware, the only application of $\Gamma$-convergence tools in the context of \ac{BIP}s appears to be the work of \citet{Lu2017b}, where, the goal is to find optimal Gaussian approximations of non-Gaussian probability measures on $\Reals^{d}$ with respect to the Kullback--Leiber divergence.
The $\Gamma$-limits of interest are specified in terms of increasing quantity of data or decreasing amplitude of noise in the data.
The $\Gamma$-limit is used to characterise frequentist consistency properties of the measure, including a Bernstein--von Mises result.
However, \citet{Lu2017b} do not mention \ac{OM} functionals.

\section{Preliminaries and notation}
\label{sec:notation}

Throughout this article, $X$ will denote a topological space, which in many cases will be a metric, normed, Banach or Hilbert space.
When thought of as a measurable space, $X$ will be equipped with its Borel $\sigma$-algebra $\Borel{X}$, which is generated by the collection of all open sets.
If $X$ is a metric space, then we write $\cBall{x}{r}$ for the open ball in $X$ of radius $r$ centred on $x$, in which case $\Borel{X}$ is generated by the collection of all open balls.
The most prominent spaces considered in this manuscript are the real sequence spaces $\ell^{p} \defeq \ell^{p}(\Naturals)$ of $p$\textsuperscript{th}-power summable sequences, $1 \leq p < \infty$, as well as the $\alpha$-weighted $\ell^{p}$ spaces defined by
\begin{equation}
	\label{eq:gamma_weighted_sequence_space}
	\ell^{p}_{\alpha}
	\defeq
	\Set{ x \in \Reals^{\Naturals} }{ ( x_{k} / \alpha_{k} )_{k \in \Naturals} \in \ell^{p} },
	\qquad
	\norm{ x }_{\ell^{p}_{\alpha}}
	\defeq
	\bignorm{ ( x_{k} / \alpha_{k} )_{k \in \Naturals}}_{\ell^{p}},
\end{equation}
where $\alpha=(\alpha_{k})_{k \in \Naturals} \in \Reals_{> 0}^{\Naturals}$.
The $\ell^{p}$ and $\alpha$-weighted $\ell^{p}$ spaces are separable Banach spaces.

In many cases, we will first define the measure $\mu$ on $(\Reals^{\Naturals}, \Borel{\Reals^{\Naturals}})$, where $\Reals^{\Naturals}$ is equipped with the product topology, show that $\mu(X) = 1$ for $X = \ell_{\alpha}^{p}$ for some $1 \leq p < \infty$ and $\alpha \in \Reals_{> 0}^{\Naturals}$, and then view $\mu$ as a measure on $(X,\Borel{X})$.
For this purpose, it is important to note that the Borel $\sigma$-algebra $\Borel{X}$ is contained in the Borel $\sigma$-algebra $\Borel{\Reals^{\Naturals}}$;
see \Cref{lemma:Sigma_algebras_are_fine}.

The set of all probability measures on $(X, \Borel{X})$ will be denoted $\prob{X}$.
We denote its elements by $\mu$, $\nu$, $\mu_{0}$, $\mu^{(n)}$, $n\in\Naturals \cup \{ \infty \}$, etc.
The topological support of a measure $\mu \in \prob{X}$ on a metric space $X$ is
\begin{equation}
	\label{eq:support_of_measure}
	\supp (\mu) \defeq \set{ x \in X }{ \text{for all $r > 0$, } \mu ( \cBall{x}{r} ) > 0 } ,
\end{equation}
which is always a closed subset of $X$.

We write $\eReals$ for the extended real line $\Reals \cup \{ \pm \infty \}$, i.e.\ the two-point compactification of $\Reals$, and $\eReals_{\geq 0}\defeq \Reals_{\geq 0}\cup \{\infty\}$.
We denote the absolute continuity of $\mu$ with respect to $\nu$ by $\mu \ll \nu$, their equivalence (i.e.\ mutual absolute continuity) by $\mu \sim \nu$, and their mutual singularity by $\mu \perp \nu$.

As motivated in \Cref{section:Introduction}, we now introduce the term  ``Onsager--Machlup functional'' of a measure $\mu$, the minimisers of which correspond exactly to global weak modes of $\mu$ under certain assumptions \citep[Proposition~4.1]{AyanbayevKlebanovLieSullivan2021_I}. 

\begin{definition}
	\label{defn:Onsager--Machlup}
	Let $X$ be a metric space and let $\mu \in \prob{X}$.
	We say that $I = I_{\mu} = I_{\mu, E} \colon E \to \Reals$, with $E \subseteq \supp(\mu) \subseteq X$, is an \defterm{Onsager--Machlup functional} (\ac{OM} functional) for $\mu$ if
	\begin{equation}
		\label{eq:Onsager--Machlup}
		\lim_{r \searrow 0} \frac{ \mu ( \cBall{x_{1}}{r} ) }{ \mu ( \cBall{x_{2}}{r} ) } = \exp ( I(x_{2}) - I(x_{1}) )
		\text{ for all $x_{1}, x_{2} \in E$.}
	\end{equation}
	We say that \defterm{property $M(\mu, E)$} is satisfied if, for some $x^{\star} \in E$,
	\begin{equation}
		\label{eq:liminf_small_ball_prob}
		x \in X \setminus E \implies \lim_{r \searrow 0} \frac{ \mu ( \cBall{x}{r} ) }{ \mu ( \cBall{x^{\star}}{r} ) } = 0 ,
	\end{equation}
	and in this situation we extend $I$ to a function $I \colon X \to \eReals$ with $I(x) \defeq + \infty$ for $x \in X \setminus E$.
\end{definition}

As we remark in Part~I of this paper \citep[Section~3]{AyanbayevKlebanovLieSullivan2021_I}, property $M(\mu,E)$ does not depend on the choice of $x^{\star}$ in \eqref{eq:liminf_small_ball_prob}.
The importance of property $M(\mu, E)$ is that it guarantees that we only need to look for global weak modes of $\mu$ within $E$ and may freely ignore points in $X \setminus E$.
This also justifies setting $I \defeq +\infty$ outside $E$.
However, in order for this property to hold, the subset $E$ on which an \ac{OM} functional can be defined needs to be chosen to be as large as possible.
On the other hand, any measure has an \ac{OM} functional on sufficiently small $E$ (such as a singleton set), and so there is a certain tension between existence of an \ac{OM} functional and the $M$-property.
We recall also that \ac{OM} functionals are at best unique up to the addition of real constants \citep[Remark 3.4]{AyanbayevKlebanovLieSullivan2021_I}.
Whenever we prove $\Gamma$-convergence and equicoercivity, we use the same version of the \ac{OM} functional.

The following terminology will be necessary for the continuity approach mentioned in \Cref{section:Introduction}.

\begin{definition}
	\label{def:quasi_invariance_bogachev}
	When $X$ is a linear topological space, $\mu \in \prob{X}$, and $h \in X$, we write $\mu_{h}$ for the \defterm{shifted measure}
	\begin{equation}
		\label{eq:shifted_measure}
		\mu_{h}(A) \defeq \mu(A - h) = \mu ( \set{ a - h }{ a \in A } )
		\quad
		\text{for each $A \in \Borel{X}$.}
	\end{equation}
	That is, $\mu_{h}$ is the push-forward of $\mu$ via the translation map $x \mapsto x + h$.
	The measure $\mu$ is called \defterm{quasi-invariant along $h$}, if, for all $t \in \Reals$, $\mu_{th} \sim \mu$.
	We define
	\begin{equation}
		\label{eq:Q_space}
		Q(\mu) \defeq \set{ h \in X }{ \mu \text{ is quasi-invariant along } h }.
	\end{equation}
	For $h\in Q(\mu)$, we define the \defterm{shift density} $r^\mu_h \defeq \frac{\rd \mu_{h}}{\rd \mu} \in L^1(\mu)$ as the Radon--Nikodym derivative of $\mu_{h}$ with respect to $\mu$, i.e.
	\begin{equation}
		\label{eq:r_mu_h_radon_nikodym_derivative_of_mu_h_wrt_mu}
		\mu_{h}(A) = \int_{A} r^\mu_h(x) \, \mu(\rd x)
		\quad
		\text{for each $A \in \Borel{X}$.}
	\end{equation}
\end{definition}

\change{
\begin{remark}
\label{remark:shift_properties_metric_independent}
Note that, in contrast to \ac{OM} functionals, the shift-quasi-invariance space $Q(\mu)$ and the shift density $r_{h}^{\mu}$ do not depend on a particular metric.
\end{remark}
}

\section{\ac{OM} functionals for product measures; equicoercivity and \texorpdfstring{$\boldsymbol{\mathsf{\Gamma}}$}{Gamma}-convergence}
\label{sec:convergence_of_OM_functionals}

Determining the shift-quasi-invariance space $Q(\mu)$, the shift density $r_{h}^{\mu}$ and the \ac{OM} functional $I_{\mu}$ for a general measure $\mu$ on an infinite-dimensional space is a challenging task, as is establishing $\Gamma$-convergence and equicoercivity for such \ac{OM} functionals.
In the following, we describe two approaches that apply to a class of shifted product measures $\mu = \bigotimes_{k\in\Naturals} \mu_{k}$, $\mu_{k}(\quark) \defeq \mu_{0}(\gamma_{k}^{-1} (\quark - m_{k}))$.
This class includes many of the classical prior measures that arise in the study of inverse problems, such as Gaussian, Besov, and Cauchy measures.
Their common structure is summarised by the following assumptions on $\mu$, where \ref{item:basic_assumption_product_full_measure}--\ref{item:basic_assumption_product_scaled_marginals} should be seen as common basic assumptions,
while \ref{item:basic_assumption_product_Shepp_condition}--\ref{item:basic_assumption_product_Besov_p_1_2} are technical assumptions that will be used individually in specific settings.

\begin{assumption}
	\label{assump:basic_assumptions_for_product_measures}
	We introduce the following assumptions on the countable product measure $\mu \defeq \bigotimes_{k\in\Naturals} \mu_{k} \in \prob{\Reals^{\Naturals}}$:
	\begin{enumerate}[label = (A\arabic*)]
		\item
		\label{item:basic_assumption_product_full_measure}
		\change{Support in $\ell_{\alpha}^{p}$:}
		$\mu(X) = 1$ where $(X,\norm{\quark}_{X}) = (\ell_{\alpha}^{p},\norm{\quark}_{\ell_{\alpha}^{p}})$ for some $\alpha \in \Reals_{>0}^{\Naturals}$ and $1\leq p < \infty$.
		Consider $\mu$ as a measure on the Banach space $X$.
		\item
		\change{Continuous, symmetric reference density:}
		\label{item:basic_assumption_product_mu_0}
		$\mu_{0} \in \prob{\Reals}$ is a probability measure on $(\Reals, \Borel{\Reals})$ with continuous and symmetric Lebesgue probability density $\rho$ such that $\rho|_{\Reals_{\geq 0}}$ is strictly monotonically decreasing.
		\item
		\change{Affine change of variables:}
		\label{item:basic_assumption_product_scaled_marginals}
		$\mu_{k}(A) \defeq \mu_{0}(\gamma_{k}^{-1} (A - m_{k}))$, $A\in\Borel{\Reals}$, where $\gamma \in \Reals_{>0}^{\Naturals}$, $m \in X$.
		\item
		\change{Finite Fisher information:}
		\label{item:basic_assumption_product_Shepp_condition}
		$\rho$ is Lebesgue-a.e.\ positive, locally absolutely continuous and $\int_{\Reals} (\rho'(u))^{2} / \rho(u) \, \rd u < \infty$.
		\item
		\change{Smooth reference density:}
		\label{item:basic_assumption_product_integrable_second_derivative}
		$\rho \in C^{2}(\Reals)$ and $\rho'' \in L^{1}(\Reals)$.
		\item
		\change{Besov measure:}
		\label{item:basic_assumption_product_Besov_p_1_2}
		$\mu = B^{s}_{p}$ is a Besov measure with $1\leq p \leq 2$ and $\alpha = \delta$.
		For a definition of $B^{s}_{p}$ and $\delta$, see \Cref{sec:Gamma_Besov}.
	\end{enumerate}
\end{assumption}

\begin{remark}
	While many product measures satisfy 		\ref{item:basic_assumption_product_integrable_second_derivative}, the Besov measure $\mu = B^{s}_{p}$ with $1\leq p < 2$ does not have a sufficiently smooth probability density $\rho$.
	This is why we treat this case separately, via	\ref{item:basic_assumption_product_Besov_p_1_2}.
	
	\change{Note also that, since the shift-quasi-invariance space $Q(\mu)$ and the shift density $r_{h}^{\mu}$ do not depend on the particular metric (cf.\ \cref{remark:shift_properties_metric_independent}), the corresponding results hold on all of $\Reals^{\Naturals}$ and do not require \ref{item:basic_assumption_product_full_measure}.}
\end{remark}

Many prior measures of interest, such as Gaussian, Cauchy and Besov measures, are often defined on Banach or Hilbert spaces $Z$ that are not subspaces of $\Reals^{\Naturals}$.
Thus, we introduce the following notation, which will allow us to translate the results from $\ell_{\alpha}^{p} \subseteq \Reals^{\Naturals}$ to $Z$:

\begin{notation}
	\label{notation:From_RN_to_Banach}
	Let $X = \ell_{\alpha}^{p}$ for some $1 \leq p < \infty$ and $\alpha \in \Reals_{> 0}^{\Naturals}$.
	Let $Z$ denote a separable Banach space with Schauder basis $\psi = (\psi_{k})_{k\in\Naturals}$ such that the synthesis operator
	\begin{align*}
		S_{\psi} & \colon X \to Z,
		&
		x = (x_{k})_{k \in \Naturals} & \mapsto \sum_{k \in \Naturals} x_{k}\psi_{k}, \\
		\intertext{and the coordinate operator}
		T_{\psi} & \colon Z \to \Reals^{\Naturals},
		&
		z = \sum_{k \in \Naturals} v_{k} \psi_{k} & \mapsto (v_{k})_{k\in\Naturals},
	\end{align*}
	are well defined and $S_{\psi}$ is a continuous embedding.
	Note that $T_{\psi} \circ S_{\psi} = \Id_{X}$.
	For a probability measure $\mu \in \prob{X}$, we denote by $\mu_{\psi} \defeq (S_{\psi})_{\#} \mu$ the push-forward of $\mu$ under $S_{\psi}$.
	If instead of $\mu \in \prob{X}$ we have $\mu \in \prob{\Reals^{\Naturals}}$ and $\mu(X) = 1$, then $\mu_{\psi}$ denotes the push-forward of the restriction of $\mu$ to $(X,\Borel{X})$.
\end{notation}

\change{
\begin{example}
	The standard example of the setup described by \Cref{notation:From_RN_to_Banach} is to consider $(\psi_{k})_{k \in \Naturals}$ to be the standard Fourier basis of the space $Z = L^{2} (\mathbb{T}^{d}; \Reals)$ of square-integrable periodic functions in $d$ variables.
	Taking $p = 2$ and $\alpha = (1, 1, \dots)$, the operators $T_{\psi}$ and $S_{\psi}$ are isometries --- they are the Fourier transform and its inverse, respectively.
	By way of contrast, taking $\alpha_{n} \sim n^{s}$ for $s > 0$ yields a Sobolev space as $Z$, and further taking $p \neq 2$ yields a Besov space.
\end{example}
}

Most of our results on $X = \ell_{\alpha}^{p}$ can be transferred to the Banach space $Z$ via $S_{\psi}$.
However, for the statements concerning \ac{OM} functionals, we will assume in addition that $S_{\psi}$ is an isometry, i.e.\ that $\norm{x}_{X}=\norm{S_{\psi}x}_Z$ for every $x \in X$.
This is because the definition of the \ac{OM} functional depends strongly on the metric, and because even equivalent norms can yield different \ac{OM} functionals \citep[Example~B.4]{AyanbayevKlebanovLieSullivan2021_I}. 

\begin{lemma}
	\label{lemma:From_RN_to_Banach}
	Suppose that \Cref{assump:basic_assumptions_for_product_measures} \ref{item:basic_assumption_product_full_measure}--\ref{item:basic_assumption_product_scaled_marginals} hold.
	If $S_{\psi}$ in \Cref{notation:From_RN_to_Banach} is an isometry, then
	\begin{equation}
		\label{equ:Ball_masses_from_RN_to_Banach}
		\mu_{\psi}(\cBall{h}{r})
		=
		\begin{cases}
			\mu(\cBall{T_{\psi}h}{r}) & \text{if } h \in \range S_{\psi} = S_{\psi}(\ell_{\alpha}^{p}),
			\\
			0 & \text{otherwise.}
		\end{cases}
	\end{equation}
	Hence, if $I_{\mu} \colon X \to \eReals$ is an \ac{OM} functional for $\mu$, then
	\begin{equation}
		\label{equ:OM_from_RN_to_Banach}
		I_{\mu_{\psi}} \colon Z \to \eReals,
		\qquad
		I_{\mu_{\psi}} (h)
		=
		\begin{cases}
			I_{\mu}(T_{\psi}h) & \text{if } h \in \range S_{\psi},
			\\
			+\infty & \text{otherwise,}
		\end{cases}
	\end{equation}
	defines an \ac{OM} functional for $\mu_{\psi}$.
	Similarly, if $I_{\mu_{\psi}} \colon X \to \eReals$ is an \ac{OM} functional for $\mu_{\psi}$, then $I_{\mu} \defeq I_{\mu_{\psi}} \circ S_{\psi} \colon X \to \eReals$ defines an \ac{OM} functional for $\mu$.
\end{lemma}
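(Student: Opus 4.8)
The plan is to transport every ingredient in the definition of an \ac{OM} functional (\Cref{defn:Onsager--Machlup}) --- the small-ball ratio limit on a subset $E$, the reference point $x^{\star}$, and property $M(\mu,E)$ --- back and forth along $S_{\psi}$. The only computational content is the triviality that an isometry maps balls onto balls; all of the care goes into the bookkeeping of $E$, $x^{\star}$, the supports, and the two ``otherwise'' branches in \eqref{equ:Ball_masses_from_RN_to_Banach}--\eqref{equ:OM_from_RN_to_Banach}.

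The first thing I would do is observe that, under the isometry hypothesis, $S_{\psi}$ is in fact \emph{surjective}. Being a linear isometry, its range $\range S_{\psi}$ is a complete, hence closed, subspace of $Z$; on the other hand $\psi_{k} = S_{\psi}(\mathbf{e}_{k})$ with $\mathbf{e}_{k} \in \ell_{\alpha}^{p}$ the $k$-th unit sequence (it is finitely supported), so $\range S_{\psi} \supseteq \spn\{\psi_{k} : k \in \Naturals\}$, which is dense in $Z$ because $\psi$ is a Schauder basis. Hence $\range S_{\psi} = Z$, $S_{\psi}$ is an isometric isomorphism with continuous inverse $T_{\psi}$, and the ``otherwise'' cases in the statement are vacuous. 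By \Cref{lemma:Sigma_algebras_are_fine}, $\mu$ is a genuine Borel probability measure on $X$ (restricted from $\Reals^{\Naturals}$ if necessary), and $\mu_{\psi} = (S_{\psi})_{\#}\mu$ is a Borel probability measure on $Z$ with $\mu_{\psi}(A) = \mu(S_{\psi}^{-1}(A))$ for $A \in \Borel{Z}$.

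Next I would prove the ball-mass identity \eqref{equ:Ball_masses_from_RN_to_Banach}: for $h \in Z$, writing $y \defeq T_{\psi}h$ so that $h = S_{\psi}y$, the isometry gives $\norm{S_{\psi}x - h}_{Z} = \norm{S_{\psi}(x-y)}_{Z} = \norm{x-y}_{X}$ for all $x \in X$, hence $S_{\psi}^{-1}(\cBall{h}{r}) = \cBall{T_{\psi}h}{r}$, and applying $\mu$ yields the claim; I would also note that $S_{\psi}$ is a homeomorphism, so $\supp\mu_{\psi} = S_{\psi}(\supp\mu)$. Now, given an \ac{OM} functional $I_{\mu} = I_{\mu,E}$ for $\mu$ (finite on $E \subseteq \supp\mu$, with property $M(\mu,E)$ via some $x^{\star} \in E$, and $\equiv +\infty$ off $E$), I set $E' \defeq S_{\psi}(E) \subseteq \supp\mu_{\psi}$ and $I_{\mu_{\psi}} \defeq I_{\mu}\circ T_{\psi}$, which is exactly the right-hand side of \eqref{equ:OM_from_RN_to_Banach} and is finite precisely on $E'$. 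For $h_{1},h_{2} \in E'$ one has $T_{\psi}h_{i} \in E$, so combining the ball-mass identity with \eqref{eq:Onsager--Machlup} for $\mu$ gives $\lim_{r\searrow 0}\mu_{\psi}(\cBall{h_{1}}{r})/\mu_{\psi}(\cBall{h_{2}}{r}) = \exp(I_{\mu_{\psi}}(h_{2}) - I_{\mu_{\psi}}(h_{1}))$; and for $h \in Z \setminus E'$ one has $T_{\psi}h \in X \setminus E$, so with $h^{\star} \defeq S_{\psi}x^{\star}$ the ball-mass identity and \eqref{eq:liminf_small_ball_prob} for $\mu$ give $\lim_{r\searrow 0}\mu_{\psi}(\cBall{h}{r})/\mu_{\psi}(\cBall{h^{\star}}{r}) = 0$, i.e.\ property $M(\mu_{\psi},E')$. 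Thus $I_{\mu_{\psi}}$ is an \ac{OM} functional for $\mu_{\psi}$.

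The converse direction is the same argument run backwards: given an \ac{OM} functional $I_{\mu_{\psi}} = I_{\mu_{\psi},E''}$ for $\mu_{\psi}$, put $I_{\mu} \defeq I_{\mu_{\psi}}\circ S_{\psi}$ and $E \defeq S_{\psi}^{-1}(E'')$; for $x_{i} \in E$ one has $S_{\psi}x_{i} \in E''$ and $\mu(\cBall{x_{i}}{r}) = \mu_{\psi}(\cBall{S_{\psi}x_{i}}{r})$ by \eqref{equ:Ball_masses_from_RN_to_Banach}, so the ratio limit for $\mu$ descends from that for $\mu_{\psi}$, and property $M(\mu,E)$ descends from $M(\mu_{\psi},E'')$ by substituting the reference point $S_{\psi}^{-1}(h^{\star})$. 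I expect the only non-routine step to be the surjectivity observation: it is what turns $T_{\psi}$ into a genuine two-sided inverse, collapses the case distinctions, and makes the transport of \Cref{defn:Onsager--Machlup} along $S_{\psi}$ completely symmetric; once that is in place, everything else is routine checking.
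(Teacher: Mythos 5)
Your proof is correct, and its core is the same as the paper's: the isometry identifies $S_{\psi}^{-1}(\cBall{h}{r})$ with $\cBall{T_{\psi}h}{r}$, so ball masses, the ratio limit \eqref{eq:Onsager--Machlup}, and property $M$ all transport back and forth along $S_{\psi}$ with $E_{\psi} = S_{\psi}(E)$. The one place you genuinely diverge is the ``otherwise'' branches: you observe that under the isometry hypothesis $\range S_{\psi}$ is complete (hence closed) \emph{and} contains the span of the $\psi_{k}$ (which is dense since $\psi$ is a Schauder basis of $Z$), so $S_{\psi}$ is surjective and both ``otherwise'' cases are vacuous. That argument is sound under the stated hypotheses and is in fact a small strengthening of the lemma. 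The paper instead keeps the second branch live and disposes of $h \notin \range S_{\psi}$ using only closedness of the range: such an $h$ has a ball $\cBall{h}{r_{0}}$ disjoint from $\range S_{\psi}$, whence $\mu_{\psi}(\cBall{h}{r}) = 0$ for $r < r_{0}$. The paper's variant is the one that survives if the isometry assumption is weakened to an embedding with merely closed range; yours buys a cleaner statement (a genuine two-sided inverse $T_{\psi}$ and a fully symmetric transport) at no extra cost here. Either way the lemma as stated is established.
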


\begin{proof}
	If $S_{\psi}$ from \Cref{notation:From_RN_to_Banach} is an isometry, then for any $h \in \range S_{\psi}$,
	\[
	\mu_{\psi}(\cBall{h}{r})
	=
	\mu(S_{\psi}^{-1}(\cBall{h}{r}))
	=
	\mu(\cBall{T_{\psi}h}{r}).
	\]
	Note that $\range S_{\psi}$ is complete and therefore closed in $Z$.
	Hence, for $h \notin \range S_{\psi} = S_{\psi}(\ell_{\alpha}^{p})$, there exists $r_{0} > 0$ such that $\cBall{h}{r_{0}} \cap \range S_{\psi} = \emptyset$.
	Thus, for any $0 < r < r_{0}$,
	\[
	\mu_{\psi}(\cBall{h}{r})
	=
	\mu(S_{\psi}^{-1}(\cBall{h}{r}))
	=
	\mu(\emptyset)
	=
	0,
	\]
	proving \eqref{equ:Ball_masses_from_RN_to_Banach}.
	The second-last and last statements follow from \Cref{defn:Onsager--Machlup} by choosing $E_{\psi} \defeq \{ S_{\psi}x \mid x\in X,\ I_{\mu}(x) < \infty \}$ and proving property $M(\mu_{\psi},E_{\psi})$ via \eqref{equ:Ball_masses_from_RN_to_Banach}, and by choosing $E \defeq \{ x\in X \mid I_{\mu_{\psi}}(S_{\psi}x) < \infty \}$ and proving property $M(\mu,E)$ via \eqref{equ:OM_from_RN_to_Banach} respectively.
\end{proof}

The two approaches that we consider for establishing \ac{OM} functionals consist of the \emph{continuity approach}, which we present in \cref{ssec:continuity_approach}, and the \emph{direct approach}, which we present in \cref{ssec:direct_approach}.
In the literature on \ac{MAP} estimators, the continuity approach appears to have been first proposed by \citet{HelinBurger2015}.
The approach connects the \ac{OM} functional for $\mu$ with the continuity of the shift density $r_{h}^{\mu}$ from \Cref{def:quasi_invariance_bogachev}.
In contrast, the direct approach considers the ratio of small ball probabilities directly, and does not require continuity of the shift density $r_{h}^{\mu}$.

\subsection{Continuity approach}
\label{ssec:continuity_approach}

We present some results that are related to the approach from \citep{HelinBurger2015}, i.e.\ the approach of using continuity of the shift density $r_{h}^{\mu}$.
The results  \Cref{lem:continuous_on_support_representative_lemma,cor:continuous_on_support_representative_lemma_for_balls} do not require the product structure of the measure as formulated in \Cref{assump:basic_assumptions_for_product_measures}.
\Cref{thm:shift_quasi_invariance_space_for_product_measures} derives the shift-quasi-invariance spaces $Q(\mu)$ and shift densities\footnote{We wish to highlight the case of Besov-$p$ measures: In previous work \citep{AgapiouBurgerDashtiHelin2018}, formulas for $Q(\mu)$ and $r_{h}^{\mu}$ could only be derived for $p=1$ by a considerable amount of work, while our results include the cases $1\leq p < \infty$ and the proof is a rather simple application of \Cref{thm:kakutanis_theorem,thm:InvariantSpaceShepp}.} $r_{h}^{\mu}$ specifically for product measures fulfilling \Cref{assump:basic_assumptions_for_product_measures} \ref{item:basic_assumption_product_mu_0}--\ref{item:basic_assumption_product_Shepp_condition}. 
\change{These assumptions refer to the continuity and symmetry of the reference density $\rho$, the affine transformation relationship between the $\mu_{k}$ and $\mu_{0}$, and the finite Fisher information condition.}
One of the key disadvantages of this approach is that it requires the existence of representatives of shift densities or logarithmic derivatives that are continuous on sets of full measure, see e.g.\ \citep[Assumption (A1)]{HelinBurger2015}.
This is the reason why we do not use either \Cref{lem:continuous_on_support_representative_lemma} or \Cref{cor:continuous_on_support_representative_lemma_for_balls} to derive \ac{OM} functionals.

\begin{lemma}
	\label{lem:continuous_on_support_representative_lemma}
	Let $X$ be a vector space with a metric
	and $\mu \in \prob{X}$.
	Let $A\in\Borel{X}$ be a bounded neighbourhood of the origin.
	Let $\mu(F)=1$ for some $F\in \Borel{X}$, and $h \in Q(\mu)$.
	Assume that the shift density $r^\mu_h$ has a representative $\tilde{r}^\mu_h$ (i.e.\ $r^\mu_h-\tilde{r}^\mu_h=0$ in $L^1(\mu)$) such that $\tilde{r}^\mu_h |_{F}\colon F \to \Reals_{\geq 0}$ is continuous\footnote{This is a much weaker assumption than continuity of $\tilde{r}^\mu_h$ on $F$, which would mean that $\tilde{r}^\mu_h$ is continuous at each point of $F$ as a function on $X$.
		See also \citep[Lemma~4.6]{LieSullivan2018} for a result that only requires local continuity.
	}.
	Then, for all $x \in F\cap\supp(\mu)$, the limit below exists and
	\begin{equation}
		\label{eq:continuous_on_support_representative_lemma}
		\lim_{\varepsilon \searrow 0}\frac{\mu_h(\varepsilon A+x)}{\mu(\varepsilon A+x)}=\tilde{r}^\mu_h(x).
	\end{equation}
\end{lemma}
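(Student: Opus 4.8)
The plan is to exploit the fact that the Radon--Nikodym derivative $r_h^\mu$ integrates $\mu_h$ against $\mu$, and to localise this identity on the shrinking sets $\varepsilon A + x$. First I would write, for every $\varepsilon > 0$ with $\mu(\varepsilon A + x) > 0$,
\[
	\frac{\mu_h(\varepsilon A + x)}{\mu(\varepsilon A + x)}
	=
	\frac{1}{\mu(\varepsilon A + x)} \int_{\varepsilon A + x} r_h^\mu(y) \, \mu(\rd y)
	=
	\frac{1}{\mu(\varepsilon A + x)} \int_{\varepsilon A + x} \tilde r_h^\mu(y) \, \mu(\rd y),
\]
where the first equality is \eqref{eq:r_mu_h_radon_nikodym_derivative_of_mu_h_wrt_mu} and the second holds because $r_h^\mu = \tilde r_h^\mu$ $\mu$-a.e. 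Note that $\mu(\varepsilon A + x) > 0$ for all $\varepsilon > 0$ precisely because $x \in \supp(\mu)$ and $A$ is a neighbourhood of the origin (so $\varepsilon A + x$ contains a ball around $x$). Thus the right-hand side is a genuine average of $\tilde r_h^\mu$ over a set shrinking to $\{x\}$, and the statement is a Lebesgue-differentiation-type claim with respect to the averaging family $\{\varepsilon A + x\}_{\varepsilon > 0}$.

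The second step is to reduce the average to a value near $\tilde r_h^\mu(x)$ using the continuity hypothesis. Fix $\eta > 0$. Since $\tilde r_h^\mu|_F$ is continuous at $x \in F$, there is a $\delta > 0$ such that $|\tilde r_h^\mu(y) - \tilde r_h^\mu(x)| < \eta$ for all $y \in F$ with $d(y,x) < \delta$. Because $A$ is bounded, there is $D > 0$ with $A \subseteq \cBall{0}{D}$, hence $\varepsilon A + x \subseteq \cBall{x}{\varepsilon D}$; choosing $\varepsilon < \delta / D$ forces $\varepsilon A + x \subseteq \cBall{x}{\delta}$. Now split the averaging integral over $(\varepsilon A + x) \cap F$ and its complement. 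On $(\varepsilon A + x) \cap F$ the integrand is within $\eta$ of $\tilde r_h^\mu(x)$, and this set carries full $\mu$-mass of $\varepsilon A + x$ since $\mu(F) = 1$; the complement is $\mu$-null. Therefore
\[
	\Absval{\frac{1}{\mu(\varepsilon A + x)} \int_{\varepsilon A + x} \tilde r_h^\mu \, \rd\mu - \tilde r_h^\mu(x)}
	\leq
	\frac{1}{\mu(\varepsilon A + x)} \int_{(\varepsilon A + x) \cap F} \bigabsval{\tilde r_h^\mu(y) - \tilde r_h^\mu(x)} \, \mu(\rd y)
	\leq
	\eta
\]
for all sufficiently small $\varepsilon$, which is exactly the claimed limit \eqref{eq:continuous_on_support_representative_lemma}.

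The only genuinely delicate point is the measurability and finiteness bookkeeping around the null set: one must check that $(\varepsilon A + x) \cap F \in \Borel{X}$ (immediate, as $A$, hence $\varepsilon A + x$, is Borel and $F \in \Borel{X}$), that $\mu((\varepsilon A + x) \setminus F) = 0$ so that replacing $\varepsilon A + x$ by $(\varepsilon A + x) \cap F$ in the integral changes nothing, and that $\tilde r_h^\mu$ is $\mu$-integrable on $\varepsilon A + x$ — which follows from $r_h^\mu \in L^1(\mu)$ together with $r_h^\mu = \tilde r_h^\mu$ $\mu$-a.e. I would also remark explicitly that continuity of $\tilde r_h^\mu|_F$ as a function \emph{on the subspace $F$} is all that is used — the footnote's warning — since we only ever evaluate $\tilde r_h^\mu$ at points of $F$ and compare with $\tilde r_h^\mu(x)$ for $x \in F$. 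No further structure on $\mu$ (product form, etc.) is needed, consistent with the remark preceding the lemma. The boundedness of $A$ is essential and is used exactly once, to guarantee $\varepsilon A + x$ shrinks into any prescribed ball about $x$.
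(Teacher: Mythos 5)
Your proposal is correct and follows essentially the same route as the paper's proof: the same Radon--Nikodym identity, the same restriction of the integral to $(\varepsilon A + x)\cap F$ using $\mu(F)=1$, the same use of $x\in\supp(\mu)$ plus $A$ being a neighbourhood of $0$ to ensure $\mu(\varepsilon A + x)>0$, and the same combination of boundedness of $A$ with continuity of $\tilde r^\mu_h|_F$ at $x$. The only cosmetic difference is that the paper sandwiches the ratio between $\inf$ and $\sup$ of $\tilde r^\mu_h$ over $(\varepsilon A + x)\cap F$ whereas you bound the deviation of the average from $\tilde r^\mu_h(x)$ directly; these are equivalent.
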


\begin{proof}
	Let $x \in X$ and $\varepsilon > 0$ be arbitrary.
	By definition of the shift density $r_{h}^{\mu}$,
	\[
	\mu_h(\varepsilon A+x)
	=
	\int_{\varepsilon A+x} r_{h}^{\mu}\, \rd \mu
	=
	\int_{\varepsilon A+x} \tilde{r}_{h}^{\mu}\, \rd \mu
	=
	\int_{(\varepsilon A+x)\cap F} \tilde{r}_{h}^{\mu}\, \rd \mu.
	\]
	By the hypotheses on $A$ and $x$, $\mu(\varepsilon A+x)>0$ for every $\varepsilon > 0$, and thus
	\[
	\inf_{y \in (\varepsilon A+x)\cap F} \tilde{r}_{h}^{\mu}(y)
	\leq
	\frac{\mu_h(\varepsilon A+x)}{\mu(\varepsilon A+x)}
	\leq
	\sup_{y \in (\varepsilon A+x)\cap F} \tilde{r}_{h}^{\mu}(y).
	\]
	Next, we will use the continuity of $\tilde{r}_{h}^{\mu} |_{F}$ on $F$ to show that as $\varepsilon\searrow 0$, the upper and lower bounds coincide.
	This will yield \eqref{eq:continuous_on_support_representative_lemma}.
	Let $x\in F$ and $\eta >0$.
	By continuity of $\tilde{r}_{h}^{\mu} |_{F}$, there exists $\delta > 0$ such that, for all $y \in \cBall{x}{\delta} \cap F$,
	\begin{equation}
		\label{eq:Continuity_of_r_in_proof}
		\absval{\tilde{r}_{h}^{\mu} (x) - \tilde{r}_{h}^{\mu} (y)}
		<
		\eta.
	\end{equation}
	Since $A$ is bounded, there exists $\varepsilon_{0} > 0$ such that, for all $0 < \varepsilon < \varepsilon_{0}$, $x + \varepsilon A \subseteq B_{\delta}(x)$.
	Hence, \eqref{eq:Continuity_of_r_in_proof} holds for all $0 < \varepsilon < \varepsilon_{0}$ and $y \in (\varepsilon A + x) \cap F$.
	Since $\eta > 0$ is arbitrary, this finishes the proof.
\end{proof}

\Cref{lem:continuous_on_support_representative_lemma} generalises \citep[Lemma~2.3]{AgapiouBurgerDashtiHelin2018} in two ways: it does not require symmetry or convexity of $A$, and it requires the continuity of the restriction of $\tilde{r}_{h}^{\mu}$ to some set of full measure $F$, instead of continuity of $\tilde{r}_{h}^{\mu}$ on the whole space $X$.
Continuity on $X$ was also assumed by \citet[Lemma~2]{HelinBurger2015}.
On the other hand, \citet[Lemma~2.3]{AgapiouBurgerDashtiHelin2018} do not assume $A$ to be a bounded neighbourhood of the origin.
However, the \change{fraction of small ball probabilities} on the left-hand side of \eqref{eq:continuous_on_support_representative_lemma} may be ill defined even if $\supp (\mu) = X$ and $A$ is symmetric and convex.
For example, if $\mu$ is an absolutely continuous measure on $(\Reals^{2},\Borel{\Reals^{2}})$ and $A = \{ 0 \} \times [-1,1]$ is a line segment, then $\mu(\varepsilon A+x) = 0$ for every $x$ and $\varepsilon$.
If $A$ is a bounded neighbourhood of the origin, then the expression on the left-hand side of \eqref{eq:continuous_on_support_representative_lemma} is well defined if and only if $x \in \supp (\mu)$.
In this case, we obtain the following result.

\begin{corollary}
	\label{cor:continuous_on_support_representative_lemma_for_balls}
	Let $X$ be a vector space with a metric, $\mu \in \prob{X}$ and $F\in\mathcal{B}(X)$ be a set of full measure.
	Assume that, for some $h \in Q(\mu)$, the shift density $r^\mu_h$ has a representative $\tilde{r}^\mu_h$ such that $\tilde{r}^\mu_h |_{F}\colon F \to \Reals_{\geq 0}$ is continuous.
	Then, for all $x_{\ast} \in F \cap \supp (\mu)$,
	\begin{equation}
		\label{eq:continuous_on_support_representative_formula_for_balls}
		\lim_{\varepsilon \searrow 0}\frac{\mu(\cBall{x_{\ast}+h}{\varepsilon})}{\mu(\cBall{x_{\ast}}{\varepsilon})}
		=
		\tilde{r}^\mu_{-h}(x_{\ast}).
	\end{equation}
	Assume that the above condition holds for any $h \in Q(\mu)$, let $x_{\ast} \in F\cap\supp (\mu)$ be arbitrary and $E(x_{\ast}) \defeq x_{\ast} + \Set{h \in Q(\mu)}{\tilde{r}^\mu_{-h}(x_{\ast}) \neq 0}$.
	Then
	\begin{equation}
		\label{eq:OM_functional_by_continuity_approach}
		I_{\mu,x_{\ast}} \colon E(x_{\ast}) \to \Reals,
		\qquad
		I_{\mu,x_{\ast}}(x)
		=
			- \log r_{x_{\ast}-x}^{\mu}(x_{\ast}), 
	\end{equation}
 	defines an \ac{OM} functional for $\mu$ on $E(x_{\ast})$.
\end{corollary}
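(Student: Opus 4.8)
The plan is to obtain everything from \Cref{lem:continuous_on_support_representative_lemma} by taking the bounded neighbourhood $A$ there to be the open unit ball of $X$, so that $\varepsilon A + x = \cBall{x}{\varepsilon}$ (here we use that balls scale with the radius, $\varepsilon \cBall{0}{1} = \cBall{0}{\varepsilon}$, which holds in the normed settings of interest) and, by symmetry of $A$, $\mu_{g}(\cBall{x}{\varepsilon}) = \mu(\cBall{x}{\varepsilon} + g) = \mu(\cBall{x+g}{\varepsilon})$ for every $g \in X$. To prove \eqref{eq:continuous_on_support_representative_formula_for_balls}, fix $x_{\ast} \in F \cap \supp(\mu)$ and apply \Cref{lem:continuous_on_support_representative_lemma} with this $A$, with the point $x_{\ast}$, and with $-h$ in the role of $h$: this is legitimate since $-h \in Q(\mu)$ ($Q(\mu)$ being symmetric) and the hypothesis furnishes a continuous-on-$F$ representative $\tilde{r}^{\mu}_{-h}$ of $r^{\mu}_{-h}$ — if one prefers to start from $\tilde{r}^{\mu}_{h}$ only, one may set $\tilde{r}^{\mu}_{-h} \defeq 1 / \tilde{r}^{\mu}_{h}(\,\cdot + h\,)$ on the relevant set, using the cocycle identity $r^{\mu}_{-h}(x)\, r^{\mu}_{h}(x+h) = 1$. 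The lemma then gives
\[
	\lim_{\varepsilon \searrow 0} \frac{\mu(\cBall{x_{\ast}+h}{\varepsilon})}{\mu(\cBall{x_{\ast}}{\varepsilon})}
	=
	\lim_{\varepsilon \searrow 0} \frac{\mu_{-h}(\varepsilon A + x_{\ast})}{\mu(\varepsilon A + x_{\ast})}
	=
	\tilde{r}^{\mu}_{-h}(x_{\ast}),
\]
which is \eqref{eq:continuous_on_support_representative_formula_for_balls}; note that $x_{\ast} \in \supp(\mu)$ makes every denominator strictly positive, so the ratios are well defined for all $\varepsilon > 0$.

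Before verifying the \ac{OM} property, I would record two consequences of the formula just proved. First, if $x = x_{\ast} + h \in E(x_{\ast})$, then $h \in Q(\mu)$ and $\tilde{r}^{\mu}_{-h}(x_{\ast}) \in (0, \infty)$ — finite since $\tilde{r}^{\mu}_{-h}$ is real-valued, and nonzero by definition of $E(x_{\ast})$ — so the ratio $\mu(\cBall{x}{\varepsilon}) / \mu(\cBall{x_{\ast}}{\varepsilon})$ stays bounded away from $0$ for small $\varepsilon$; since $\mu(\cBall{x_{\ast}}{\varepsilon}) > 0$ for all $\varepsilon > 0$ and $\varepsilon \mapsto \mu(\cBall{x}{\varepsilon})$ is nondecreasing, this forces $\mu(\cBall{x}{\varepsilon}) > 0$ for all $\varepsilon > 0$, i.e.\ $E(x_{\ast}) \subseteq \supp(\mu)$, as required for the domain of an \ac{OM} functional. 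Second, $r^{\mu}_{x_{\ast}-x}(x_{\ast}) = \tilde{r}^{\mu}_{-h}(x_{\ast}) \in (0, \infty)$, so $I_{\mu, x_{\ast}}(x) = -\log \tilde{r}^{\mu}_{-h}(x_{\ast})$ is a well-defined real number on $E(x_{\ast})$.

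Finally, to check \eqref{eq:Onsager--Machlup}, fix $x_{1}, x_{2} \in E(x_{\ast})$, write $x_{i} = x_{\ast} + h_{i}$ with $h_{i} \in Q(\mu)$, and factor the small-ball ratio through the reference point $x_{\ast}$:
\[
	\frac{\mu(\cBall{x_{1}}{\varepsilon})}{\mu(\cBall{x_{2}}{\varepsilon})}
	=
	\frac{\mu(\cBall{x_{1}}{\varepsilon})}{\mu(\cBall{x_{\ast}}{\varepsilon})}
	\cdot
	\left( \frac{\mu(\cBall{x_{2}}{\varepsilon})}{\mu(\cBall{x_{\ast}}{\varepsilon})} \right)^{-1}.
\]
All three ball masses are positive for every $\varepsilon > 0$ by the previous paragraph, and by \eqref{eq:continuous_on_support_representative_formula_for_balls} (valid for every $h \in Q(\mu)$ under the standing hypothesis of the second part) the two factors converge as $\varepsilon \searrow 0$ to $\tilde{r}^{\mu}_{-h_{1}}(x_{\ast})$ and to $\tilde{r}^{\mu}_{-h_{2}}(x_{\ast})$, both finite and strictly positive; hence the product converges to $\tilde{r}^{\mu}_{-h_{1}}(x_{\ast}) / \tilde{r}^{\mu}_{-h_{2}}(x_{\ast})$. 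Since $-h_{i} = x_{\ast} - x_{i}$, this limit equals $\exp\bigl( \log r^{\mu}_{x_{\ast}-x_{1}}(x_{\ast}) - \log r^{\mu}_{x_{\ast}-x_{2}}(x_{\ast}) \bigr) = \exp\bigl( I_{\mu, x_{\ast}}(x_{2}) - I_{\mu, x_{\ast}}(x_{1}) \bigr)$, which is exactly \eqref{eq:Onsager--Machlup}. I expect the only genuinely delicate points to be the transfer between the shift densities for $h$ and $-h$ (and between the corresponding continuity-on-$F$ statements) and the bookkeeping that keeps every denominator strictly positive throughout; both are isolated above, and the remainder is the elementary product-of-limits argument.
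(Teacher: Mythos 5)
Your proof is correct and follows essentially the same route as the paper: both reduce to \Cref{lem:continuous_on_support_representative_lemma} by taking $A$ to be the unit ball and applying it with $-h$ in place of $h$, and then read off the \ac{OM} property. You are in fact slightly more thorough than the paper's own proof, which only verifies \eqref{eq:Onsager--Machlup} for pairs involving $x_{\ast}$, whereas you complete the check for arbitrary pairs $x_{1}, x_{2} \in E(x_{\ast})$ by factoring through $x_{\ast}$, and you also verify the inclusion $E(x_{\ast}) \subseteq \supp(\mu)$ required by \Cref{defn:Onsager--Machlup}.
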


\begin{proof}
	\change{Recall that \eqref{eq:shifted_measure} defines $
		\mu_{h}(A) \defeq \mu(A - h)$ for each $A \in \Borel{X}$. From this definition,} it follows that $\mu(A+h)=\mu_{-h}(A)$ and we obtain \eqref{eq:continuous_on_support_representative_formula_for_balls}.
    \change{Next, recall that \eqref{eq:r_mu_h_radon_nikodym_derivative_of_mu_h_wrt_mu} states that $
		\mu_{h}(A) = \int_{A} r^\mu_h(x) \, \mu(\rd x)$ for each $A \in \Borel{X}$. This implies that $r^\mu_0=1$ $\mu$-a.s. Hence $I_{\mu,x_{\ast}}(x_{\ast}) = 0$.}
	Now let $x \in E(x_{\ast})$, i.e.\ $x = x_{\ast} + h$ with $h\in Q(\mu)$ and $\tilde{r}^\mu_{-h}(x_{\ast}) \neq 0$.
	\change{Then \eqref{eq:OM_functional_by_continuity_approach} follows from}
	\[
	\lim_{\varepsilon \searrow 0}
	\frac{\mu(\cBall{x}{\varepsilon})}{\mu(\cBall{x_{\ast}}{\varepsilon})}
	=
	\lim_{\varepsilon \searrow 0}
	\frac{\mu(\cBall{x_{\ast}+h}{\varepsilon})}{\mu(\cBall{x_{\ast}}{\varepsilon})}
	=
	\tilde{r}^\mu_{-h}(x_{\ast})
	=
	\exp( \log \tilde{r}^\mu_{x_{\ast}-x}(x_{\ast}) )
	=
	\exp( I_{\mu,x_{\ast}}(x_{\ast}) - I_{\mu,x_{\ast}}(x) ).
	\]
\end{proof}

The derivation of $Q(\mu)$ and $r_{h}^{\mu}$ for product measures $\mu$ that satisfy \Cref{assump:basic_assumptions_for_product_measures} \ref{item:basic_assumption_product_mu_0}--\ref{item:basic_assumption_product_Shepp_condition} relies on a theorem of \citet{Kakutani1948} and a consequence of this theorem, due to \citet{Shepp1965}.
Therefore, we state both in \Cref{sec:equivalence}.
Below,
\begin{equation}
	\label{eq:Hellinger_integral}
	H(\mu,\nu) \defeq \int_\Omega \sqrt{\frac{\rd \mu}{\rd \lambda}\frac{\rd \nu}{\rd \lambda}} \, \rd \lambda
\end{equation}
denotes the \emph{Hellinger integral} of two probability measures $\mu$ and $\nu$ on the same measurable space $(\Omega,\mathcal{F})$, where $\lambda$ is another measure on $(\Omega,\mathcal{F})$ with $\mu,\nu\ll\lambda$.
Note that the value of $H(\mu, \nu)$ is independent of the choice of $\lambda$; see e.g.\ \citep[Chapter IV, \S{1.a}, Lemma~1.8]{JacodShiryaev2003}.

\begin{theorem}[Shift-quasi-invariance space and shift density $r_{h}^{\mu}$ of certain product measures]
	\label{thm:shift_quasi_invariance_space_for_product_measures}
	Let $\mu$ satisfy \Cref{assump:basic_assumptions_for_product_measures} \ref{item:basic_assumption_product_mu_0}--\ref{item:basic_assumption_product_Shepp_condition}.
	Then the shift-quasi-invariance space of $\mu$ is $Q(\mu) = \ell_{\gamma}^{2}$ and, for any $h \in Q(\mu)$ and $x \in \Reals^{\Naturals}$,
	\begin{equation}
		\label{eq:r_for_product_measures}
		r_{h}^{\mu}(x)
		=
		\prod_{k =1}^{\infty} \frac{\rho\bigl(\gamma_{k}^{-1} (x_{k} - m_{k} - h_{k}) \bigr)}{\rho\bigl(\gamma_{k}^{-1} (x_{k} - m_{k}) \bigr)}.
	\end{equation}
	Further, if \Cref{assump:basic_assumptions_for_product_measures} \ref{item:basic_assumption_product_full_measure} is satisfied, then the objects \change{$\mu_{\psi}, S_{\psi}$ and $T_{\psi}$ defined} in \Cref{notation:From_RN_to_Banach} satisfy $Q(\mu_{\psi}) = S_{\psi}(\ell_{\gamma}^{2})$, and, for any $h \in Q(\mu_{\psi})$ and $z\in Z$, $r_{h}^{\mu_{\psi}}(z) = r_{T_{\psi}(h)}^{\mu}(T_{\psi}(z))$.
\end{theorem}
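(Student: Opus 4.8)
The plan is to reduce the problem to the one-dimensional factors of $\mu$ and then combine Kakutani's product-measure dichotomy (\Cref{thm:kakutanis_theorem}) with Shepp's translation criterion (\Cref{thm:InvariantSpaceShepp}). Since $\mu = \bigotimes_{k} \mu_{k}$ with $\mu_{k}(A) = \mu_{0}(\gamma_{k}^{-1}(A - m_{k}))$, for any $h \in \Reals^{\Naturals}$ the shifted measure factorises as $\mu_{h} = \bigotimes_{k} (\mu_{k})_{h_{k}}$, where $\mu_{k}$ has Lebesgue density $x \mapsto \gamma_{k}^{-1}\rho(\gamma_{k}^{-1}(x - m_{k}))$ and $(\mu_{k})_{h_{k}}$ has Lebesgue density $x \mapsto \gamma_{k}^{-1}\rho(\gamma_{k}^{-1}(x - m_{k} - h_{k}))$. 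By \ref{item:basic_assumption_product_Shepp_condition}, $\rho$ is Lebesgue-a.e.\ positive, so these densities are a.e.\ positive; hence $\mu_{k} \sim (\mu_{k})_{h_{k}}$ for every $k$, with one-dimensional Radon--Nikodym derivative
\[
	\frac{\rd (\mu_{k})_{h_{k}}}{\rd \mu_{k}}(x) = \frac{\rho\bigl(\gamma_{k}^{-1}(x - m_{k} - h_{k})\bigr)}{\rho\bigl(\gamma_{k}^{-1}(x - m_{k})\bigr)}.
\]

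Next I would compute the Hellinger integral \eqref{eq:Hellinger_integral} of the $k$-th pair: the affine substitution $u = \gamma_{k}^{-1}(x - m_{k})$ gives $H\bigl(\mu_{k},(\mu_{k})_{h_{k}}\bigr) = \int_{\Reals}\sqrt{\rho(u)\rho(u - \gamma_{k}^{-1}h_{k})}\,\rd u = H\bigl(\mu_{0},(\mu_{0})_{\gamma_{k}^{-1}h_{k}}\bigr)$, so it depends on $h$ only through $t_{k} \defeq h_{k}/\gamma_{k}$. By \Cref{thm:kakutanis_theorem}, $\mu_{h}$ and $\mu$ are either equivalent or mutually singular, $\mu_{h}\sim\mu$ exactly when $\prod_{k} H(\mu_{0},(\mu_{0})_{t_{k}}) > 0$, and in the equivalent case the density factorises into the infinite product on the right-hand side of \eqref{eq:r_for_product_measures} ($\mu$-a.e., and this expression is the required representative on all of $\Reals^{\Naturals}$). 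Because $\rho$ is additionally locally absolutely continuous with finite Fisher information (\ref{item:basic_assumption_product_Shepp_condition}), \Cref{thm:InvariantSpaceShepp} applies and yields $\prod_{k} H(\mu_{0},(\mu_{0})_{t_{k}}) > 0 \iff \sum_{k} t_{k}^{2} < \infty \iff (h_{k}/\gamma_{k})_{k} \in \ell^{2} \iff h \in \ell_{\gamma}^{2}$. Finally, quasi-invariance along $h$ requires $\mu_{th}\sim\mu$ for every $t\in\Reals$: if $h\in\ell_{\gamma}^{2}$ then $th\in\ell_{\gamma}^{2}$ for all $t$ (it is a linear space), so every translate is equivalent to $\mu$, while if $h\notin\ell_{\gamma}^{2}$ then already $\mu_{h}\perp\mu$. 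Hence $Q(\mu) = \ell_{\gamma}^{2}$ and \eqref{eq:r_for_product_measures} holds.

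For the push-forward statement, assume \ref{item:basic_assumption_product_full_measure} and recall from \Cref{notation:From_RN_to_Banach} that $T_{\psi}\circ S_{\psi} = \Id$, so $S_{\psi}$ is injective and $\mu_{\psi} = (S_{\psi})_{\#}\mu$ is concentrated on the Borel linear subspace $\range S_{\psi}$. For $h \in \range S_{\psi}$ one has $h = S_{\psi}(T_{\psi}h)$, and linearity of $S_{\psi}$ gives $(\mu_{\psi})_{h} = (S_{\psi})_{\#}\bigl(\mu_{T_{\psi}h}\bigr)$; since push-forward along the injective $S_{\psi}$ is a measure-class isomorphism onto $\range S_{\psi}$ and transports Radon--Nikodym derivatives by substitution, $(\mu_{\psi})_{th}\sim\mu_{\psi}$ for all $t$ iff $\mu_{tT_{\psi}h}\sim\mu$ for all $t$ iff $T_{\psi}h\in\ell_{\gamma}^{2}$, in which case $r_{h}^{\mu_{\psi}}(z) = r_{T_{\psi}h}^{\mu}(T_{\psi}z)$. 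For $h \notin \range S_{\psi}$, the translate $(\mu_{\psi})_{h}$ is concentrated on $\range S_{\psi} + h$, which is disjoint from $\range S_{\psi}$ because the latter is a subspace, so $(\mu_{\psi})_{h}\perp\mu_{\psi}$ and $h\notin Q(\mu_{\psi})$. Combining the two cases gives $Q(\mu_{\psi}) = S_{\psi}(\ell_{\gamma}^{2})$ together with the claimed density identity.

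I expect the only genuinely delicate point to be the correct use of \Cref{thm:InvariantSpaceShepp}: both the one-dimensional equivalence-or-singularity dichotomy for the translated product and the clean $\sum_{k} t_{k}^{2} < \infty$ criterion rest on the finite-Fisher-information hypothesis, and one must also separately guarantee that each one-dimensional pair $(\mu_{k},(\mu_{k})_{h_{k}})$ is equivalent, not merely that the product passes Kakutani's test — this is exactly where a.e.-positivity of $\rho$ is used. The remaining ingredients (the affine change of variables in the Hellinger integral, the reduction of quasi-invariance to $t = 1$, and the push-forward bookkeeping through $S_{\psi}$ and $T_{\psi}$) are routine.
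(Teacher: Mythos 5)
Your proposal is correct and follows essentially the same route as the paper: reduce each factor to the unscaled reference measure via the affine substitution in the Hellinger integral, apply Kakutani's dichotomy together with Shepp's theorem to identify $\ell_{\gamma}^{2}$ and the product formula for $r_{h}^{\mu}$, and transfer to $Z$ through $S_{\psi}$ and $T_{\psi}$. Your explicit handling of the "for all $t$" in the definition of quasi-invariance and of the case $h \notin \range S_{\psi}$ (disjointness of the cosets) is slightly more detailed than the paper's, but the substance is identical.
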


\begin{proof}
	For $k \in\Naturals$, let $\nu_{k} \defeq \mu_{k}(\quark - h_{k})$, $\tilde{\mu}_{k} \defeq
	\mu_{0}$, $\tilde{\nu}_{k} \defeq \tilde{\mu}_{k}(\quark - \tilde{h}_{k})$, where $\tilde{h} = (\tilde{h}_{k})_{k \in \Naturals} \defeq (\gamma_{k}^{-1} h_{k})_{k \in \Naturals}$, and define
	\[
	\mu
	\defeq
	\bigotimes_{k\in\Naturals} \mu_{k},
	\qquad
	\nu
	\defeq
	\bigotimes_{k\in\Naturals} \nu_{k}
	=
	\mu(\quark - h),
	\qquad
	\tilde{\mu}
	\defeq
	\bigotimes_{k\in\Naturals} \tilde{\mu}_{k},
	\qquad
	\tilde{\nu}
	\defeq
	\bigotimes_{k\in\Naturals} \tilde{\nu}_{k}
	=
	\tilde{\mu}(\quark - \tilde{h}).
	\]
	From the definition of $\mu_k$ above, \ref{item:basic_assumption_product_mu_0} and \ref{item:basic_assumption_product_scaled_marginals}, we have $\ud\mu_k(x)=\gamma_{k}^{-1}\rho\big(\gamma_{k}^{-1}(x-m_k)\big)\ud x$.
	Using the definition of $\nu_k$ and the a.e.\ positivity of $\rho$ in \ref{item:basic_assumption_product_Shepp_condition}, it follows that $\mu_{k}\sim \nu_{k}$ and $\tilde{\mu}_{k}\sim \tilde{\nu}_{k}$ for all $k\in\Naturals$.
	Using the change of variables formula,
	\begin{align*}
		H(\mu_{k},\nu_{k})
		&=
		\gamma_{k}^{-1}
		\int_{\Reals} \rho\bigl(\gamma_{k}^{-1} (u - m_{k})\bigr)^{\frac{1}{2}}\,
		\rho\bigl(\gamma_{k}^{-1} (u - m_{k} - h_{k})\bigr)^{\frac{1}{2}}  \rd u
		\\
		&=
		\int_{\Reals} \rho(u)^{\frac{1}{2}}\,
		\rho(u-\tilde{h}_{k})^{\frac{1}{2}}  \rd u
		\\
		&=
		H(\tilde{\mu}_{k},\tilde{\nu}_{k}).
	\end{align*}
	Hence, by Kakutani's theorem (\Cref{thm:kakutanis_theorem}), $\mu\sim\nu$ if and only if $\tilde{\mu}\sim\tilde{\nu}$, and similarly $\mu\perp\nu$ if and only if $\tilde{\mu}\perp\tilde{\nu}$.
	Finally, Shepp's theorem (\Cref{thm:InvariantSpaceShepp}) implies the following:
	\begin{itemize}
		\item
		If $\sum_{k\in\Naturals} \tilde{h}_{k}^{2} = \sum_{k\in\Naturals} (h_{k}/\gamma_{k})^{2} < \infty$, then $\tilde{\mu} \sim \tilde{\nu}$.
		\item
		If $\sum_{k\in\Naturals} \tilde{h}_{k}^{2} = \sum_{k\in\Naturals} (h_{k}/\gamma_{k})^{2} = \infty$, then $\tilde{\mu} \perp \tilde{\nu}$.
	\end{itemize}
	This proves $Q(\mu) = \ell_{\gamma}^{2}$, where we used that $\ell_{\gamma}^{2} \subseteq X$ by \Cref{cor:handy_inclusion_of_weighted_lp_spaces_general}, while \eqref{eq:r_for_product_measures} follows directly from \Cref{thm:kakutanis_theorem}.
	For the final statement first note that, since $\mu(X) = 1$ by assumption, we have, for any $B\in\Borel{Z}$ and $h\in Z$,
	\[
	\mu_{\psi}(B)
	=
	\mu(S_{\psi}^{-1}(B))
	=
	\mu(T_{\psi}(B) \cap X)
	=
	\mu(T_{\psi}(B)),
	\qquad
	\mu_{\psi}(B-h)
	=
	\mu(T_{\psi}(B) - T_{\psi}(h)).
	\]
	Hence, for $h\in Z$, the shift density $r_{h}^{\mu_{\psi}}$ on $Z$ exists if and only if the shift density $r_{T_{\psi}(h)}^{\mu}$ on $\Reals^{\Naturals}$ exists, in which case $r_{h}^{\mu_{\psi}}(z) = r_{T_{\psi}(h)}^{\mu}(T_{\psi}(z))$.
\end{proof}

Having identified the shift-quasi-invariance space $Q(\mu)$ and the shift density $r_{h}^{\mu}$, the second step in the continuity approach involves finding a representative $\tilde{r}_{h}^{\mu}$ and a sufficiently large subset $F$ of $X$ such that the restriction of $\tilde{r}_{h}^{\mu}$ to $F$ is continuous.
The third step is then to apply either \Cref{lem:continuous_on_support_representative_lemma} or \Cref{cor:continuous_on_support_representative_lemma_for_balls}.
We do not pursue the continuity approach further because the second step is difficult to carry out and because a more direct approach yielded the desired results.
We describe the direct approach in the next section.

\subsection{Direct approach}
\label{ssec:direct_approach}

The following definition and theorem provide the basis for establishing the \ac{OM} functional for the product measures defined in \Cref{assump:basic_assumptions_for_product_measures}.
We demonstrate this by applying both to the Cauchy measure in \Cref{cor:OM_Cauchy}, and to the Besov-$p$ measure with $1 \leq p \leq 2$ in \Cref{thm:OM_functional_for_Besov_with_p_in_1_2}.

\change{Recall that \ref{item:basic_assumption_product_mu_0} assumes that the reference measure $\mu_{0}$ on $\Reals$ has a continuous, symmetric density $\rho$ decreasing on $\Reals_{\geq 0}$, and \ref{item:basic_assumption_product_scaled_marginals} assumes that each measure $\mu_{k}$ on $\Reals$ is obtained from $\mu_{0}$ by an affine change of variables.}
\begin{definition}
	\label{def:logdensity}
	Under \Cref{assump:basic_assumptions_for_product_measures} \ref{item:basic_assumption_product_mu_0}--\ref{item:basic_assumption_product_scaled_marginals} we define the \defterm{negative log-density} $\logden \colon \Reals \to \eReals_{\geq 0}$ by
	\begin{equation}
		\label{eq:negative_log-density}
		\logden(u)
		\defeq
		- \log \frac{\rho(u)}{\rho(0)}
		=
		\log \rho(0) - \log \rho(u),
	\end{equation}
	and the \defterm{formal negative log-density} $\logden_{\gamma,m} \colon X \to \eReals_{\geq 0}$ by
	\begin{equation}
		\label{eq:formal_negative_log-density}
		\logden_{\gamma,m}(h)
		\defeq
		\sum_{k \in \Naturals}\logden(\gamma_{k}^{-1}(h_{k} - m_{k})).
	\end{equation}
	Further, we set $E_{\gamma,m} \defeq \Set{h \in X}{\logden_{\gamma,m}(h) < \infty}$.
	Similarly, using \Cref{notation:From_RN_to_Banach}, we define the \defterm{formal negative log-density} $\logden_{\gamma,m} \colon Z \to \eReals_{\geq 0}$ by
	\[
	\logden_{\gamma,m,\psi}(h)
	\defeq
	\begin{cases}
	\logden_{\gamma,m}(T_{\psi}(h)) & \text{if } T_{\psi}(h) \in X,
	\\
	+\infty & \text{otherwise,}
	\end{cases}
	\]
	and
	$E_{\gamma,m,\psi} \defeq \Set{h \in Z}{\logden_{\gamma,m,\psi}(h) < \infty} = S_{\psi}(E_{\gamma,m})$.
\end{definition}

Note that, by \Cref{assump:basic_assumptions_for_product_measures} \ref{item:basic_assumption_product_mu_0}, $\logden|_{\Reals_{\geq 0}} \colon \Reals_{\geq 0} \to \Reals_{\geq 0}$ is a strictly monotonically increasing bijection.

\change{Recall that \Cref{assump:basic_assumptions_for_product_measures} \ref{item:basic_assumption_product_full_measure} refers to the assumption that $X=\ell_{\alpha}^{p}$ and $\mu(X)=1$,  \ref{item:basic_assumption_product_integrable_second_derivative} assumes that the reference measure $\mu_{0}$ has density $\rho \in C^{2}(\Reals)$ such that $\rho''\in L^1(\Reals)$, and \ref{item:basic_assumption_product_Besov_p_1_2} assumes that $\mu=B^{s}_{p}$ is a Besov measure with $1\leq p\leq 2$ and $\alpha=\delta$.}
\begin{theorem}
	\label{thm:OM_for_product_measures}
	Under \Cref{assump:basic_assumptions_for_product_measures} \ref{item:basic_assumption_product_full_measure}--\ref{item:basic_assumption_product_scaled_marginals},
	\begin{align}
	\label{eq:Limit_ball_ratio_product_measures_upper_bound}
	\lim_{r \searrow 0} \frac{\mu( \cBall{h}{r} )}{\mu( \cBall{m}{r})}
	&\leq
	\begin{cases}
	\exp\left( - \logden_{\gamma,m}(h)  \right) & \text{if } h \in E_{\gamma,m},
	\\
	0 & \text{if } h \notin E_{\gamma,m}.
	\end{cases}
	\intertext{
	In particular, property $M(\mu, E_{\gamma,m})$ is satisfied and, if $I_{\mu} \colon X \to \eReals$ is an (extended) \ac{OM} functional for $\mu$ with $I_{\mu}(m) = 0$, then $I_{\mu} \geq \logden_{\gamma,m}$.
	If, in addition, either \Cref{assump:basic_assumptions_for_product_measures} \ref{item:basic_assumption_product_integrable_second_derivative} or \ref{item:basic_assumption_product_Besov_p_1_2} is satisfied, then
	}
	\label{eq:Limit_ball_ratio_product_measures_lower_bound_l_2}
	\lim_{r \searrow 0} \frac{\mu( \cBall{h}{r} )}{\mu( \cBall{m}{r})}
	&\geq
	\exp\left( - \logden_{\gamma,m}(h)  \right)
	\quad
	\text{if } h \in E_{\gamma,m} \cap (m + \ell_{\gamma}^{2}).
	\end{align}
	In particular, in this case and under the additional assumption that \change{$E_{\gamma,m} \subseteq m + \ell_{\gamma}^{2}$}, $I_{\mu} = \logden_{\gamma,m} \colon X \to \eReals$ is an (extended) \ac{OM} functional for $\mu$.
\end{theorem}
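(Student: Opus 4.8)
The plan is to combine the two small-ball estimates \eqref{eq:Limit_ball_ratio_product_measures_upper_bound}--\eqref{eq:Limit_ball_ratio_product_measures_lower_bound_l_2} (the latter being available precisely because \ref{item:basic_assumption_product_integrable_second_derivative} or \ref{item:basic_assumption_product_Besov_p_1_2} is now in force) with the property $M(\mu,E_{\gamma,m})$ already established in the first part of the theorem, and then to pass from the distinguished base point $m$ to an arbitrary pair of points by the standard ratio manipulation.

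First I would record the elementary facts $\logden(0)=0$ and $\logden\geq 0$ on $\Reals$ (the latter because $\rho(u)\leq\rho(0)$ by \ref{item:basic_assumption_product_mu_0}), whence $\logden_{\gamma,m}(m)=\sum_{k}\logden(0)=0$, so $m\in E_{\gamma,m}$, and $\exp(-\logden_{\gamma,m}(h))\in(0,1]$ for every $h\in E_{\gamma,m}$. Now invoke the extra hypothesis $E_{\gamma,m}\subseteq m+\ell_{\gamma}^{2}$: every $h\in E_{\gamma,m}$ then also lies in $E_{\gamma,m}\cap(m+\ell_{\gamma}^{2})$, so \eqref{eq:Limit_ball_ratio_product_measures_upper_bound} and \eqref{eq:Limit_ball_ratio_product_measures_lower_bound_l_2} together force the limit to exist and satisfy
\[
	\lim_{r\searrow 0}\frac{\mu(\cBall{h}{r})}{\mu(\cBall{m}{r})}=\exp\bigl(-\logden_{\gamma,m}(h)\bigr)>0\qquad\text{for all }h\in E_{\gamma,m}.
\]
Strict positivity of this limit forces $\mu(\cBall{h}{r})>0$ for all small $r$, i.e.\ $h\in\supp(\mu)$; hence $E_{\gamma,m}\subseteq\supp(\mu)$, as required by \Cref{defn:Onsager--Machlup}. (That $\mu(\cBall{m}{r})>0$ for small $r$, which is needed for the displayed ratio to make sense, is already implicit in property $M(\mu,E_{\gamma,m})$.)

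Next, for arbitrary $x_{1},x_{2}\in E_{\gamma,m}$ I would factor
\[
	\frac{\mu(\cBall{x_{1}}{r})}{\mu(\cBall{x_{2}}{r})}=\frac{\mu(\cBall{x_{1}}{r})/\mu(\cBall{m}{r})}{\mu(\cBall{x_{2}}{r})/\mu(\cBall{m}{r})},
\]
which is legitimate for small $r$ since all three balls carry positive mass; letting $r\searrow 0$, the numerator tends to $\exp(-\logden_{\gamma,m}(x_{1}))$ and the denominator to $\exp(-\logden_{\gamma,m}(x_{2}))>0$, so the ratio converges to $\exp(\logden_{\gamma,m}(x_{2})-\logden_{\gamma,m}(x_{1}))$, which is exactly \eqref{eq:Onsager--Machlup} with $I=\logden_{\gamma,m}$. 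Thus $\logden_{\gamma,m}|_{E_{\gamma,m}}$ is an \ac{OM} functional for $\mu$ on $E_{\gamma,m}$, and since property $M(\mu,E_{\gamma,m})$ holds, \Cref{defn:Onsager--Machlup} extends it to $I_{\mu}\colon X\to\eReals$ by setting $I_{\mu}\defeq+\infty$ on $X\setminus E_{\gamma,m}$; this extension coincides with $\logden_{\gamma,m}\colon X\to\eReals_{\geq 0}$ because, by definition of $E_{\gamma,m}$, $\logden_{\gamma,m}(h)=+\infty$ exactly when $h\notin E_{\gamma,m}$. I do not expect any genuine obstacle here: the entire analytic content lives in \eqref{eq:Limit_ball_ratio_product_measures_upper_bound}--\eqref{eq:Limit_ball_ratio_product_measures_lower_bound_l_2} and in property $M$, which are taken as established, and the only point needing care is the well-definedness of the ratios, which is exactly where the strict positivity coming from the lower bound \eqref{eq:Limit_ball_ratio_product_measures_lower_bound_l_2} is used.
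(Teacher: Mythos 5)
There is a genuine gap: your proposal assumes precisely the statements that constitute the theorem. The two displayed inequalities \eqref{eq:Limit_ball_ratio_product_measures_upper_bound} and \eqref{eq:Limit_ball_ratio_product_measures_lower_bound_l_2}, together with property $M(\mu,E_{\gamma,m})$, \emph{are} the assertions to be proved here --- they are not available from ``the first part of the theorem'' as prior results. The paper devotes the entire technical appendix (\Cref{section:Proof_Theorem_OM_for_product_measures}) to them: one reduces $\mu(\cBall{h}{r})$ to a limit of finite-dimensional marginal masses $\mu^{[1:K]}(\cylBall{h_{[1:K]}}{r}{[1:K]})$, sandwiches each such mass between $V_{r}(h,a,K)$ times the infimum and supremum of the density over the first $a$ coordinates (\Cref{lemma:Inequality_for_small_balls_using_Volumes_around_h}), proves an Anderson-type inequality $V_{r}(h,a,K)\leq V_{r}(0,a,K)$ via the symmetric decay property (\Cref{lemma:Inequalities_Volumes_around_h_1_d,lemma:Symmetric_decay_property_Lambda,lemma:Inequalities_Volumes_around_h}), and, for the lower bound, controls the loss factor $\prod_{k}c_{k}$ using the second-order perturbation estimate of \Cref{lemma:Perturbation_by_shifted_rho} under \ref{item:basic_assumption_product_integrable_second_derivative} (which is where the restriction $h-m\in\ell_{\gamma}^{2}$ enters) or the Clarkson-inequality argument of \Cref{lem:Perturbation_by_shifts_for_Besov} under \ref{item:basic_assumption_product_Besov_p_1_2}. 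None of this appears in your proposal, so the analytic content of the theorem is entirely missing.

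The part you do write out --- deducing from the matching upper and lower bounds that the limit exists and equals $\exp(-\logden_{\gamma,m}(h))$, that $E_{\gamma,m}\subseteq\supp(\mu)$, and that the two-point ratio \eqref{eq:Onsager--Machlup} follows by dividing through by the ratio at the base point $m$, with property $M$ justifying the extension by $+\infty$ --- is correct and is essentially how the paper's ``In particular'' conclusions follow from the displayed estimates. But that final deduction is the routine step; to have a proof of the theorem you must supply the small-ball estimates themselves.
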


\begin{proof}
The technical proof is given in \Cref{section:Proof_Theorem_OM_for_product_measures}.
\end{proof}

Similar statements follow for the Banach space $Z$ in \Cref{notation:From_RN_to_Banach} under the assumption that $S_{\psi}$ is an isometry.

\begin{corollary}
\label{cor:OM_for_product_measures_Banach}
Using \Cref{notation:From_RN_to_Banach}, assuming $S_{\psi}$ to be an isometry, and assuming that \Cref{assump:basic_assumptions_for_product_measures} \ref{item:basic_assumption_product_full_measure}--\ref{item:basic_assumption_product_scaled_marginals} hold,
\begin{align}
	\label{eq:Limit_ball_ratio_product_measures_upper_bound_Banach}
	\lim_{r \searrow 0} \frac{\mu_{\psi}( \cBall{h}{r} )}{\mu_{\psi}( \cBall{S_{\psi} m}{r})}
	&\leq
	\begin{cases}
		\exp\left( - \logden_{\gamma,m,\psi}(h)  \right) & \text{if } h \in E_{\gamma,m,\psi},
		\\
		0 & \text{if } h \notin E_{\gamma,m,\psi}.
	\end{cases}
	\intertext{
		In particular, property $M(\mu_{\psi}, E_{\gamma,m,\psi})$ is satisfied and, if  $I_{\mu_{\psi}} \colon Z \to \eReals$ is an \ac{OM} functional for $\mu_{\psi}$ with $I_{\mu_{\psi}}(T_{\psi}(m)) = 0$, then $I_{\mu_{\psi}} \geq \logden_{\gamma,m,\psi}$.
		If, in addition, either \Cref{assump:basic_assumptions_for_product_measures} \ref{item:basic_assumption_product_integrable_second_derivative} or \ref{item:basic_assumption_product_Besov_p_1_2} is satisfied, then
	}
	\label{eq:Limit_ball_ratio_product_measures_lower_bound_l_2_Banach}
	\lim_{r \searrow 0} \frac{\mu_{\psi}( \cBall{h}{r} )}{\mu_{\psi}( \cBall{S_{\psi}m}{r})}
	&\geq
	\exp\left( - \logden_{\gamma,m,\psi}(h)  \right)
	\quad
	\text{if } h \in E_{\gamma,m,\psi} \cap S_{\psi}(m + \ell_{\gamma}^{2}).
\end{align}
In particular, in this case and under the additional assumption that \change{$E_{\gamma,m} \subseteq m + \ell_{\gamma}^{2}$}, $I_{\mu_{\psi}} = \logden_{\gamma,m,\psi} \colon Z \to \eReals$ is an (extended) \ac{OM} functional for $\mu_{\psi}$.
\end{corollary}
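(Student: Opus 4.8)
The plan is to deduce the entire corollary from \Cref{thm:OM_for_product_measures} by pushing its conclusions forward along the isometric embedding $S_{\psi}$, using \Cref{lemma:From_RN_to_Banach} as the bridge between ball masses and between \ac{OM} functionals on $X$ and on $Z$. \textbf{Step 1 (bookkeeping for $S_{\psi}$).} First I would record that, since $S_{\psi}$ is an isometry with $T_{\psi}\circ S_{\psi} = \Id_{X}$ and $\psi$ is a Schauder basis, $\range S_{\psi}$ is closed in $Z$ (as in the proof of \Cref{lemma:From_RN_to_Banach}) and equals $\Set{h\in Z}{T_{\psi}h\in X}$: if $h = S_{\psi}x$ then $T_{\psi}h = x\in X$, and conversely if $x\defeq T_{\psi}h\in X$ then $S_{\psi}x$ and $h$ have the same Schauder coefficients $(x_{k})_{k}$, so $h = S_{\psi}x\in\range S_{\psi}$. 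Combined with \Cref{notation:From_RN_to_Banach} and \Cref{def:logdensity}, this gives, for $h\in\range S_{\psi}$ with $x = T_{\psi}h$: $h\in E_{\gamma,m,\psi}\iff x\in E_{\gamma,m}$ with $\logden_{\gamma,m,\psi}(h) = \logden_{\gamma,m}(x)$, and $h\in S_{\psi}(m+\ell_{\gamma}^{2})\iff x\in m+\ell_{\gamma}^{2}$ (here $\ell_{\gamma}^{2}\subseteq X$, e.g.\ by \Cref{cor:handy_inclusion_of_weighted_lp_spaces_general}, so $m+\ell_{\gamma}^{2}\subseteq X$ is meaningful); whereas $h\notin\range S_{\psi}$ forces $h\notin E_{\gamma,m,\psi}$. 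Finally $m\in E_{\gamma,m}$ because $\logden_{\gamma,m}(m) = \sum_{k}\logden(0) = 0$, so $S_{\psi}m\in E_{\gamma,m,\psi}$ is an admissible reference point; I would also note that the normalisation written as $I_{\mu_{\psi}}(T_{\psi}(m)) = 0$ should read $I_{\mu_{\psi}}(S_{\psi}m) = 0$ for the types to agree, and this is the normalisation used below.

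\textbf{Step 2 (ball-ratio bounds).} By \Cref{lemma:From_RN_to_Banach}, $\mu_{\psi}(\cBall{S_{\psi}m}{r}) = \mu(\cBall{m}{r})$ for every $r>0$. For the numerator there are two cases. If $h\notin\range S_{\psi}$, then $\mu_{\psi}(\cBall{h}{r}) = 0$ for all small $r$ (closedness of $\range S_{\psi}$) and $h\notin E_{\gamma,m,\psi}$, so both sides of \eqref{eq:Limit_ball_ratio_product_measures_upper_bound_Banach} vanish. If $h = S_{\psi}x\in\range S_{\psi}$, then $\mu_{\psi}(\cBall{h}{r}) = \mu(\cBall{x}{r})$, so the left-hand side of \eqref{eq:Limit_ball_ratio_product_measures_upper_bound_Banach} equals $\lim_{r\searrow 0}\mu(\cBall{x}{r})/\mu(\cBall{m}{r})$, and \eqref{eq:Limit_ball_ratio_product_measures_upper_bound} of \Cref{thm:OM_for_product_measures} bounds it by $\exp(-\logden_{\gamma,m}(x)) = \exp(-\logden_{\gamma,m,\psi}(h))$ if $x\in E_{\gamma,m}$ and by $0$ otherwise --- exactly the right-hand side of \eqref{eq:Limit_ball_ratio_product_measures_upper_bound_Banach} by Step 1. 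The lower bound \eqref{eq:Limit_ball_ratio_product_measures_lower_bound_l_2_Banach} is the same computation: under the additional assumption \ref{item:basic_assumption_product_integrable_second_derivative} or \ref{item:basic_assumption_product_Besov_p_1_2}, for $h = S_{\psi}x\in E_{\gamma,m,\psi}\cap S_{\psi}(m+\ell_{\gamma}^{2})$ one has $x\in E_{\gamma,m}\cap(m+\ell_{\gamma}^{2})$ and \eqref{eq:Limit_ball_ratio_product_measures_lower_bound_l_2} applies.

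\textbf{Step 3 (property $M$, the inequality, and the \ac{OM} conclusion).} Property $M(\mu_{\psi}, E_{\gamma,m,\psi})$ with reference point $S_{\psi}m$ is immediate from \eqref{eq:Limit_ball_ratio_product_measures_upper_bound_Banach}: for $h\notin E_{\gamma,m,\psi}$ the limit of the nonnegative ratio is $\leq 0$, hence $=0$. If $I_{\mu_{\psi}}$ is an \ac{OM} functional with $I_{\mu_{\psi}}(S_{\psi}m) = 0$, then \eqref{eq:Onsager--Machlup} at the pair $(h, S_{\psi}m)$ together with \eqref{eq:Limit_ball_ratio_product_measures_upper_bound_Banach} yields $\exp(-I_{\mu_{\psi}}(h))\leq\exp(-\logden_{\gamma,m,\psi}(h))$, i.e.\ $I_{\mu_{\psi}}\geq\logden_{\gamma,m,\psi}$ (with $I_{\mu_{\psi}}(h) = +\infty$ forced when $h\notin E_{\gamma,m,\psi}$). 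Under the further hypothesis $E_{\gamma,m}\subseteq m+\ell_{\gamma}^{2}$, the bounds \eqref{eq:Limit_ball_ratio_product_measures_upper_bound_Banach} and \eqref{eq:Limit_ball_ratio_product_measures_lower_bound_l_2_Banach} combine to $\lim_{r\searrow 0}\mu_{\psi}(\cBall{h}{r})/\mu_{\psi}(\cBall{S_{\psi}m}{r}) = \exp(-\logden_{\gamma,m,\psi}(h))$ for every $h\in E_{\gamma,m,\psi}$; factoring an arbitrary ratio $\mu_{\psi}(\cBall{x_{1}}{r})/\mu_{\psi}(\cBall{x_{2}}{r})$ through $S_{\psi}m$ then shows that $\logden_{\gamma,m,\psi}$ satisfies \eqref{eq:Onsager--Machlup} on $E_{\gamma,m,\psi}$, and, with property $M$, it is an extended \ac{OM} functional. (Equivalently, this last point follows by feeding the \ac{OM} functional $I_{\mu} = \logden_{\gamma,m}$ produced by \Cref{thm:OM_for_product_measures} into \Cref{lemma:From_RN_to_Banach} and checking, via Step 1, that $I_{\mu}\circ T_{\psi}$ extended by $+\infty$ off $\range S_{\psi}$ is precisely $\logden_{\gamma,m,\psi}$.)

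I do not expect a genuine obstacle: all the analytic work lives in \Cref{thm:OM_for_product_measures}, and the corollary is essentially a translation. The one point that needs care --- and the reason for Step 1 --- is aligning the case distinction on the right-hand sides ($h\in E_{\gamma,m,\psi}$ or not) with the three possibilities on the left ($T_{\psi}h\in E_{\gamma,m}$, $T_{\psi}h\in X\setminus E_{\gamma,m}$, $T_{\psi}h\notin X$), in particular the observation that small balls around points outside the closed subspace $\range S_{\psi}$ are eventually $\mu_{\psi}$-null.
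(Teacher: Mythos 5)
Your proposal is correct and follows exactly the paper's route: the paper's own proof is a one-line appeal to \Cref{lemma:From_RN_to_Banach} to transfer \eqref{eq:Limit_ball_ratio_product_measures_upper_bound} and \eqref{eq:Limit_ball_ratio_product_measures_lower_bound_l_2} from $X$ to $Z$, and your Steps 1--3 simply make that transfer explicit. Your observation that the normalisation should read $I_{\mu_{\psi}}(S_{\psi}m)=0$ rather than $I_{\mu_{\psi}}(T_{\psi}(m))=0$ is also a correct catch of a typographical slip in the statement.
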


\begin{proof}
By \Cref{lemma:From_RN_to_Banach}, \eqref{eq:Limit_ball_ratio_product_measures_upper_bound_Banach} and \eqref{eq:Limit_ball_ratio_product_measures_lower_bound_l_2_Banach} follow directly from \eqref{eq:Limit_ball_ratio_product_measures_upper_bound} and \eqref{eq:Limit_ball_ratio_product_measures_lower_bound_l_2}.
\end{proof}

\Cref{thm:OM_for_product_measures} yields the full OM functional for a limited class of product measures.
We conjecture that the conclusions of \Cref{thm:OM_for_product_measures} hold for a larger class of product measures.

\begin{conjecture}[\ac{OM} functional of product measures]
	\label{conjecture:OM_for_product_measures}
	Under \Cref{assump:basic_assumptions_for_product_measures} \ref{item:basic_assumption_product_full_measure}--\ref{item:basic_assumption_product_scaled_marginals},
	\begin{align}
		\label{eq:Limit_ball_ratio_product_measures_conjecture}
		\lim_{r \searrow 0} \frac{\mu( \cBall{h}{r} )}{\mu( \cBall{m}{r})}
		&=
		\begin{cases}
			\exp\left( - \logden_{\gamma,m}(h)  \right) & \text{if } h \in E_{\gamma,m},
			\\
			0 & \text{if } h \notin E_{\gamma,m}.
		\end{cases}
	\end{align}
In particular, property $M(\mu, E_{\gamma,m})$ is satisfied and $I_{\mu} \colon X \to \eReals$ with $I_{\mu} = \logden_{\gamma,m}$ defines an \ac{OM} functional for $\mu$.
\end{conjecture}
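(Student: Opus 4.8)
The plan is to treat the claimed upper bound and property $M(\mu,E_{\gamma,m})$ as already in hand: they are exactly inequality \eqref{eq:Limit_ball_ratio_product_measures_upper_bound} of \Cref{thm:OM_for_product_measures}, which holds under \ref{item:basic_assumption_product_full_measure}--\ref{item:basic_assumption_product_scaled_marginals}, and for $h\notin E_{\gamma,m}$ that upper bound already forces the limit to be $0$. Everything therefore reduces to the matching lower bound $\liminf_{r\searrow0}\mu(\cBall{h}{r})/\mu(\cBall{m}{r})\geq\exp(-\logden_{\gamma,m}(h))$ for $h\in E_{\gamma,m}$. Translating by $m$, I may assume $m=0$: writing $\bar\mu\defeq\bigotimes_{k}\bar\mu_{k}$ with $\bar\mu_{k}$ the law of density $\gamma_{k}^{-1}\rho(\gamma_{k}^{-1}\cdot)$ and $g\defeq h-m\in\ell^{p}_{\alpha}$, the goal becomes
\[
\liminf_{r\searrow0}\frac{\bar\mu(\cBall{g}{r})}{\bar\mu(\cBall{0}{r})}\ \geq\ \prod_{k}\frac{\rho(\gamma_{k}^{-1}g_{k})}{\rho(0)},
\]
the product being exactly $\exp(-\logden_{\gamma,m}(h))$ and convergent precisely because $g\in E_{\gamma,m}$.

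The first genuine step is the finite-dimensional shift, which is clean and uses no smoothness. For the truncation $g^{(N)}\defeq(g_{1},\dots,g_{N},0,\dots)$ the map $\bar\mu\mapsto\bar\mu(\cdot-g^{(N)})$ is absolutely continuous (a finite product of one-dimensional density ratios), so that $\bar\mu(\cBall{g^{(N)}}{r})=\int_{\cBall{0}{r}}F_{N}\,\rd\bar\mu$ with $F_{N}(y)=\prod_{k\leq N}\rho(\gamma_{k}^{-1}(y_{k}+g_{k}))/\rho(\gamma_{k}^{-1}y_{k})$. Since $F_{N}$ depends on finitely many coordinates, all forced to $0$ on $\cBall{0}{r}$ as $r\searrow0$, continuity of $\rho$ and $\rho(0)>0$ give $\bar\mu(\cBall{g^{(N)}}{r})/\bar\mu(\cBall{0}{r})\to P_{N}\defeq\prod_{k\leq N}\rho(\gamma_{k}^{-1}g_{k})/\rho(0)>0$. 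Writing $F_{N}=e^{-q_{N}}e^{A_{N}}$ with $q_{N}\defeq\sum_{k\leq N}\logden(\gamma_{k}^{-1}g_{k})$ and $A_{N}(0)=0$, this reads $\bar\mu(\cBall{g^{(N)}}{r})/\bar\mu(\cBall{0}{r})=e^{-q_{N}}\,\bE_{\bar\mu}[e^{A_{N}}\mid\cBall{0}{r}]$, the conditional expectation tending to $1$ for each fixed $N$.

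The difficulty lies in passing to the full shift. Because $g^{(N)}\to g$ in $\ell^{p}_{\alpha}$, dominated convergence gives $\bar\mu(\cBall{g}{r})=\lim_{N}\bar\mu(\cBall{g^{(N)}}{r})$ for all but countably many $r$, whence, using $q_{N}\to\logden_{\gamma,m}(h)<\infty$, the target is exactly $\liminf_{r\searrow0}\lim_{N}\bE_{\bar\mu}[e^{A_{N}}\mid\cBall{0}{r}]\geq1$, where $A_{N}=\sum_{k\leq N}\psi_{k}$ and $\psi_{k}(t)=[\phi(\gamma_{k}^{-1}(t+g_{k}))-\phi(\gamma_{k}^{-1}t)]-[\phi(\gamma_{k}^{-1}g_{k})-\phi(0)]$, $\phi\defeq\log\rho$. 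Equivalently, conditioning on the first $N$ coordinates by Fubini reduces the claim to a self-similar tail comparison
\[
\liminf_{\tau\searrow0}\frac{\bar\mu_{>N}\bigl(\{\textstyle\sum_{k>N}|x_{k}-g_{k}|^{p}/\alpha_{k}^{p}<\tau\}\bigr)}{\bar\mu_{>N}\bigl(\{\textstyle\sum_{k>N}|x_{k}|^{p}/\alpha_{k}^{p}<\tau\}\bigr)}\ \geq\ Q_{N}\defeq\prod_{k>N}\frac{\rho(\gamma_{k}^{-1}g_{k})}{\rho(0)},
\]
with $Q_{N}\to1$ as $N\to\infty$.

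Here is the crux, and the reason the statement is only conjectured. Globally the tail shift is singular exactly when $g-g^{(N)}\notin\ell^{2}_{\gamma}$ (Shepp's theorem, cf.\ \Cref{thm:shift_quasi_invariance_space_for_product_measures}), so $A_{N}\to-\infty$ $\bar\mu$-a.s.\ and a naive Fatou bound collapses to $0$; the required mass is carried by the atypically small configurations selected by the conditioning and must be extracted quantitatively. The mechanism I would exploit is the symmetry of each $\bar\mu_{k}$ about $0$ together with the symmetry of $\cBall{0}{r}$: this annihilates the odd part of each $\psi_{k}$ under the conditional law and reduces $\bE_{\bar\mu}[e^{A_{N}}\mid\cBall{0}{r}]$ to an average of the even increments $\tfrac12(\psi_{k}(y_{k})+\psi_{k}(-y_{k}))$ (which vanish at $y_{k}=0$), whose aggregate is to be controlled by the convergent series $\sum_{k}\logden(\gamma_{k}^{-1}g_{k})$. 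The main obstacle is to make this bound uniform as $N\to\infty$ and $r\searrow0$ together: on $\cBall{0}{r}$ only the aggregate $\ell^{p}_{\alpha}$-size of the tail is small, so one must quantify the per-coordinate modulus of continuity of $\phi$ near its relevant argument and sum these moduli against the summable weights $\logden(\gamma_{k}^{-1}g_{k})$. Under \ref{item:basic_assumption_product_integrable_second_derivative} or \ref{item:basic_assumption_product_Besov_p_1_2} this is exactly what the second-derivative, respectively Besov, estimates in the proof of \Cref{thm:OM_for_product_measures} supply via a Taylor expansion of $\phi$; removing them requires replacing that Taylor step by a monotone-rearrangement or modulus-of-continuity argument valid for merely continuous, symmetric, unimodal $\rho$, and establishing this uniform estimate is the essential gap between \Cref{thm:OM_for_product_measures} and the conjecture.
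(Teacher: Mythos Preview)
Your assessment is accurate, and in fact matches the paper's own position: this statement is labelled a \emph{conjecture} precisely because the paper does \emph{not} prove it. The paper explicitly calls it ``an important open problem'' in the closing remarks, and after \Cref{thm:Gamma_convergence_for_product_measures} notes that the $\Gamma$-convergence theory rests on the \emph{assumption} $I_{\mu^{(n)}}=\logden_{\gamma^{(n)},m^{(n)}}^{(n)}$, which is established only under \ref{item:basic_assumption_product_integrable_second_derivative} or \ref{item:basic_assumption_product_Besov_p_1_2} together with the constraint $E_{\gamma,m}\subseteq m+\ell_{\gamma}^{2}$.

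Your reduction is correct: the upper bound and property $M(\mu,E_{\gamma,m})$ are exactly \eqref{eq:Limit_ball_ratio_product_measures_upper_bound}, so only the lower bound for $h\in E_{\gamma,m}$ is missing. You have also correctly isolated the two obstructions that the paper's proof cannot circumvent. First, the paper's Step~2 (\Cref{section:Proof_Theorem_OM_for_product_measures}) controls the tail contribution $V_{r}(h,K_{\ast},K)/V_{r}(0,K_{\ast},K)$ via \Cref{lemma:Inequalities_Volumes_around_h}, whose lower bound produces factors $c_{k}$ summable only when $h-m\in\ell_{\gamma}^{2}$ (under \ref{item:basic_assumption_product_integrable_second_derivative}) or $h-m\in\ell_{\gamma}^{p}$ (under \ref{item:basic_assumption_product_Besov_p_1_2}); outside these regimes the product $\prod_{k}c_{k}$ may diverge to $0$ even though $\logden_{\gamma,m}(h)<\infty$. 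Second, those factors $c_{k}$ themselves come from \Cref{lemma:Perturbation_by_shifted_rho} (Taylor with $\rho''\in L^{1}$) or \Cref{lem:Perturbation_by_shifts_for_Besov} (Clarkson-type inequality), and under bare \ref{item:basic_assumption_product_mu_0} neither tool is available. Your observation that the required mass lives on the atypical set singled out by the conditioning, so that the a.s.\ divergence $A_{N}\to-\infty$ forced by Shepp singularity is not by itself fatal, is exactly the heuristic one would want to make rigorous --- but neither you nor the paper has done so, and the proposal rightly stops short of claiming a proof.
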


\change{The following two theorems refer to \eqref{equ:OM_from_RN_to_Banach}, which we recall below:
\begin{equation*}
		I_{\mu_{\psi}} \colon Z \to \eReals,
		\qquad
		I_{\mu_{\psi}} (h)
		=
		\begin{cases}
			I_{\mu}(T_{\psi}h) & \text{if } h \in \range S_{\psi},
			\\
			+\infty & \text{otherwise.}
		\end{cases}
	\end{equation*}
}

The following result concerns equicoercivity of a sequence of \ac{OM} functionals.
It assumes that one is given a sequence of probability measures, where each probability measure $\mu^{(n)} \in \prob{\Reals^{\Naturals}}$ is defined by, in the sense of \Cref{assump:basic_assumptions_for_product_measures} \ref{item:basic_assumption_product_full_measure}--\ref{item:basic_assumption_product_scaled_marginals}, an absolutely continuous reference measure $\mu_{0}^{(n)} \in \prob{\Reals}$, a shift vector $m^{(n)} \in X = \ell_{\alpha}^{p}$, and a scaling vector $\gamma^{(n)} \in \Reals_{>0}^{\Naturals}$, $n\in \Naturals\cup \{ \infty \}$ \change{(note that $\gamma^{(n)} \in X$ by \Cref{lemma:Connection_between_scaling_sequences})}.
Furthermore, it assumes that each probability measure has an \ac{OM} functional.
The result states that if the sequence of probability measures $\mu^{(n)}$ converges to $\mu^{(\infty)}$ in the sense that both the sequence of shift vectors and the sequence of scaling vectors converge in $X$ to the corresponding pair of shift and scaling vectors, and if the sequence of Lebesgue densities of the reference measures converges pointwise, then the sequence of \ac{OM} functionals is equicoercive.
\begin{theorem}[Equicoercivity for product measures]
\label{thm:Equicoercivity_for_product_measures}
For $n\in\Naturals \cup \{ \infty \}$, let $\mu^{(n)}\in \prob{X}$ be probability measures on the same space $X = \ell_{\alpha}^{p}$ that satisfy \Cref{assump:basic_assumptions_for_product_measures} \ref{item:basic_assumption_product_full_measure}--\ref{item:basic_assumption_product_scaled_marginals}
with shift parameters $m^{(n)} \in X$, scale parameters $\gamma^{(n)} \in \Reals_{> 0}^{\Naturals}$ and probability densities $\rho^{(n)}$ of the measures $\mu_{0}^{(n)}\in \prob{\Reals}$.
If \ac{OM} functionals $I_{\mu^{(n)}} \colon X \to \eReals$ with $I_{\mu^{(n)}}(m^{(n)}) = 0$ exist for all $n\in\Naturals \cup \{ \infty \}$ and if  $\norm{m^{(n)} - m^{(\infty)}}_{X}\to 0$, $\norm{\gamma^{(n)} - \gamma^{(\infty)}}_{X}\to 0$ and $\rho^{(n)}\to\rho^{(\infty)}$ (pointwise) as $n\to\infty$, then the sequence $(I_{\mu^{(n)}})_{n\in\Naturals}$ is equicoercive.
Further, using \Cref{notation:From_RN_to_Banach} and assuming $S_{\psi}$ to be an isometry, the sequence $(I_{\mu_{\psi}^{(n)}})_{n\in\Naturals}$ defined by \eqref{equ:OM_from_RN_to_Banach} is equicoercive.
\end{theorem}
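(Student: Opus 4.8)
The plan is to bound each Onsager--Machlup functional below by the corresponding formal negative log-density and then to show that this family of lower bounds has sublevel sets lying in one common compact subset of $X$. Recall that equicoercivity of $(I_{\mu^{(n)}})_{n\in\Naturals}$ means: for every $t\in\Reals$ there is a compact $K_{t}\subseteq X$ with $\{x\in X \mid I_{\mu^{(n)}}(x)\leq t\}\subseteq K_{t}$ for all $n$. First I would invoke \Cref{thm:OM_for_product_measures}: since each $\mu^{(n)}$ satisfies \Cref{assump:basic_assumptions_for_product_measures}\,\ref{item:basic_assumption_product_full_measure}--\ref{item:basic_assumption_product_scaled_marginals} and $I_{\mu^{(n)}}(m^{(n)})=0$, its conclusion gives $I_{\mu^{(n)}}\geq J_{n}\geq 0$ on $X$, where $J_{n}\defeq\logden_{\gamma^{(n)},m^{(n)}}$ (nonnegativity of $\logden$ comes from \ref{item:basic_assumption_product_mu_0}). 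Hence $\{I_{\mu^{(n)}}\leq t\}\subseteq\{J_{n}\leq t\}$ for every $t$ and $n$, and it suffices to find, for each $t\geq 0$, a compact set containing $\bigcup_{n\in\Naturals}\{J_{n}\leq t\}$ (for $t<0$ these sublevel sets are empty).

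Next I would extract a pointwise bound from $J_{n}(h)\leq t$. Writing $\logden^{(n)}(u)=\log\rho^{(n)}(0)-\log\rho^{(n)}(u)$ for the negative log-density of $\mu_{0}^{(n)}$, assumption \ref{item:basic_assumption_product_mu_0} makes each $\logden^{(n)}|_{\Reals_{\geq 0}}$ a strictly increasing bijection onto $\Reals_{\geq 0}$. Since every summand of $J_{n}(h)=\sum_{k}\logden^{(n)}\big((\gamma^{(n)}_{k})^{-1}(h_{k}-m^{(n)}_{k})\big)$ is nonnegative, $J_{n}(h)\leq t$ forces $\absval{h_{k}-m^{(n)}_{k}}\leq\gamma^{(n)}_{k}\,R_{n}(t)$ with $R_{n}(t)\defeq(\logden^{(n)}|_{\Reals_{\geq 0}})^{-1}(t)$. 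The crux is to show $R(t)\defeq\sup_{n\in\Naturals}R_{n}(t)<\infty$; \textbf{this is the step I expect to be the main obstacle}, since it is the only place where uniformity in $n$ has to be squeezed out of the merely pointwise convergence $\rho^{(n)}\to\rho^{(\infty)}$. I would argue by contradiction: if $R_{n_{j}}(t)\to\infty$ along some $n_{j}\to\infty$, then from the identity $\rho^{(n_{j})}\big(R_{n_{j}}(t)\big)=e^{-t}\rho^{(n_{j})}(0)$ and monotonicity of $\rho^{(n_{j})}$ on $\Reals_{\geq 0}$ one gets $\rho^{(n_{j})}(u_{0})\geq e^{-t}\rho^{(n_{j})}(0)$ for each fixed $u_{0}>0$ and all large $j$; passing to the limit using pointwise convergence yields $\rho^{(\infty)}(u_{0})\geq e^{-t}\rho^{(\infty)}(0)>0$ for every $u_{0}>0$, contradicting $\int_{\Reals}\rho^{(\infty)}(u)\,\rd u=1$. (Alternatively one may quote the Pólya-type fact that pointwise convergence of monotone functions to a continuous limit is locally uniform, which also delivers $R(t)<\infty$.)

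With $R(t)<\infty$ in hand, $J_{n}(h)\leq t$ gives $\absval{h_{k}}\leq\absval{m^{(n)}_{k}}+R(t)\gamma^{(n)}_{k}$ for all $k$, so, using $(a+b)^{p}\leq 2^{p-1}(a^{p}+b^{p})$ for $a,b\geq 0$ and $p\geq 1$, both $\norm{h}_{\ell^{p}_{\alpha}}^{p}$ and its tail $\sum_{k>N}\absval{h_{k}/\alpha_{k}}^{p}$ are dominated by $2^{p-1}$ times the corresponding (tail) quantities associated with $\norm{m^{(n)}}_{X}^{p}$ and $R(t)^{p}\norm{\gamma^{(n)}}_{X}^{p}$. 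Now I would use that $(m^{(n)})_{n}$ and $(\gamma^{(n)})_{n}$ are convergent sequences in $X=\ell^{p}_{\alpha}$ (with $\gamma^{(n)}\in X$ by \Cref{lemma:Connection_between_scaling_sequences}), so the sets $\Set{m^{(n)}}{n\in\Naturals\cup\{\infty\}}$ and $\Set{\gamma^{(n)}}{n\in\Naturals\cup\{\infty\}}$ are compact in $\ell^{p}_{\alpha}$, hence norm-bounded and with equi-small tails. Consequently $\bigcup_{n\in\Naturals}\{J_{n}\leq t\}$ is norm-bounded in $X$ and has tails vanishing uniformly in $n$ as $N\to\infty$; by the compactness criterion in $\ell^{p}_{\alpha}$ (relative compactness $\iff$ norm-boundedness together with $\lim_{N}\sup\sum_{k>N}\absval{\cdot/\alpha_{k}}^{p}=0$, via the isometry $\ell^{p}_{\alpha}\cong\ell^{p}$), its closure $K_{t}$ is compact in $X$ and contains every $\{I_{\mu^{(n)}}\leq t\}$. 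This establishes equicoercivity of $(I_{\mu^{(n)}})_{n\in\Naturals}$.

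Finally, for the Banach-space statement I would note that, $S_{\psi}$ being an isometry, $\range S_{\psi}$ is closed in $Z$ and $S_{\psi}\colon X\to\range S_{\psi}$ is an isometric isomorphism with inverse $T_{\psi}|_{\range S_{\psi}}$; hence by \eqref{equ:OM_from_RN_to_Banach}, $\{I_{\mu^{(n)}_{\psi}}\leq t\}=S_{\psi}\big(\{I_{\mu^{(n)}}\leq t\}\big)\subseteq S_{\psi}(K_{t})$, which is compact in $Z$ as the continuous image of the compact set $K_{t}$. Therefore $(I_{\mu^{(n)}_{\psi}})_{n\in\Naturals}$ is equicoercive as well. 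I expect the only routine-but-nontrivial bookkeeping beyond the $R(t)<\infty$ step to be the verification that convergent sequences in $\ell^{p}_{\alpha}$ have uniformly small tails, which follows by splitting off finitely many early terms and comparing the rest with the limit.
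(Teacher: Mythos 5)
Your proposal is correct, and its skeleton is the same as the paper's: invoke \Cref{thm:OM_for_product_measures} to bound $I_{\mu^{(n)}}$ below by $\logden^{(n)}_{\gamma^{(n)},m^{(n)}}$, deduce that the $t$-sublevel set is contained in the box $\prod_{k}[m^{(n)}_{k}-a_{n}\gamma^{(n)}_{k},\,m^{(n)}_{k}+a_{n}\gamma^{(n)}_{k}]$ with $a_{n}=(\logden^{(n)}|_{\Reals_{\geq 0}})^{-1}(t)$, and then show that the union over $n$ of these boxes is relatively compact in $X$. Where you genuinely diverge is in the two technical ingredients. For the compactness of the union, the paper works with the operators $T^{(n)}\colon\ell^{\infty}\to\ell^{p}_{\alpha}$, $v\mapsto(\gamma^{(n)}_{k}v_{k})_{k}$, proves they are compact via finite-rank approximation, shows $\norm{a_{n}T^{(n)}-a_{\infty}T^{(\infty)}}\to 0$, and then extracts convergent subsequences by comparing points of $K^{(n_{j})}_{t}$ with their images in $K^{(\infty)}_{t}$; you instead apply the classical total-boundedness criterion in $\ell^{p}$ (norm-boundedness plus uniformly small tails, inherited from the compactness of the convergent families $\{m^{(n)}\}$ and $\{\gamma^{(n)}\}$), which is more elementary and avoids operator theory altogether. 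For the uniform control of the $a_{n}$, the paper simply asserts $a_{n}\to a_{\infty}$ from pointwise convergence of the densities, whereas you prove the (weaker, but sufficient) bound $\sup_{n}a_{n}<\infty$ by a clean contradiction argument using monotonicity and the integrability of $\rho^{(\infty)}$ --- arguably a more careful justification of the one step the paper glosses over. The Banach-space reduction via the isometry $S_{\psi}$ is the same in both arguments.
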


\begin{proof}
	By \Cref{assump:basic_assumptions_for_product_measures} \ref{item:basic_assumption_product_mu_0}, the negative log-densities $\logden^{(n)} \colon \Reals \to \eReals_{\geq 0}$ are symmetric and their restrictions $\logden^{(n)}|_{\Reals_{\geq 0}} \colon \eReals_{\geq 0} \to \eReals_{\geq 0}$ are strictly monotonically increasing bijections.
	Let $t\geq 0$ and $a_{n} \defeq (\logden^{(n)}|_{\Reals_{\geq 0}})^{-1}(t)$.
	Since $\rho^{(n)}$ converges pointwise to $\rho^{(\infty)}$ by assumption, $a_{n} \to a_{\infty}$ as $n\to\infty$.
	Further, by \Cref{thm:OM_for_product_measures},
	\begin{equation}
		\label{eq:OM_inequality_product_measures}
		I_{\mu^{(n)}}(x)
		\geq
		\logden_{\gamma^{(n)},m^{(n)}}^{(n)}(x)
		=
		\sum_{k \in \Naturals}\logden^{(n)}\bigl( (\gamma_{k}^{(n)})^{-1}(x_{k} - m_{k}^{(n)})\bigr),
		\qquad
		x \in X.
	\end{equation}

	The proof is structured around the following four steps, of which the second and fourth are straightforward.

	\noindent\textbf{Step 1.}
	The operators
	\[
		T^{(n)} \colon \ell^{\infty} \to \ell_{\alpha}^{p},
		\qquad
		(v_{k})_{k\in\Naturals}
		\mapsto
		(\gamma_{k}^{(n)} v_{k})_{k\in\Naturals},
		\qquad
		n\in\Naturals \cup \{ \infty \},
	\]
	are well defined, compact and $\norm{a_{n} T^{(n)} - a_{\infty} T^{(\infty)}} \to 0$ as $n\to\infty$.

	\smallskip

	\noindent\textbf{Step 2.}
	It follows that the sets
	\[
		K_{t}^{(n)}
		\defeq
		m^{(n)} + a_{n}\, T^{(n)} \cBallExtra{0}{1}{\ell^{\infty}}
		=
		\prod_{k \in \Naturals} [ m_{k}^{(n)} - \gamma_{k}^{(n)} a_{n} , m_{k}^{(n)} + \gamma_{k}^{(n)} a_{n} ]
	\]
	are pre-compact and, by \eqref{eq:OM_inequality_product_measures},
	\begin{align*}
		I_{\mu^{(n)}}^{-1}([-\infty,t])
		&\subseteq
		\Set{x \in X}{\logden^{(n)}\bigl( (\gamma_{k}^{(n)})^{-1}(x_{k} - m_{k}^{(n)})\bigr) \leq t \text{ for each } k\in\Naturals}
		\\
		&=
		\Set{x \in X}{\absval{x_{k} - m_{k}^{(n)}} \leq \gamma_{k}^{(n)} a_{n} \text{ for each } k\in\Naturals}
		\\
		&=
		K_{t}^{(n)}.
	\end{align*}

	\smallskip

	\noindent\textbf{Step 3.}
	$K_{t}^{\circ} \defeq \bigcup_{n\in \Naturals} K_{t}^{(n)}$ is sequentially pre-compact.
	Hence $K_{t} \defeq \overline{K_{t}^{\circ}}$ is compact, which proves equicoercivity of $(I_{\mu^{(n)}})_{n\in\Naturals}$.
	Note that for $t < 0$ there is nothing to prove, since $I_{\mu^{(n)}}^{-1}([-\infty,t]) = \emptyset$ for each $n \in \Naturals \cup \{ \infty \}$ in this case.

	\smallskip

	\noindent\textbf{Step 4.}
	Equicoercivity of $(I_{\mu_{\psi}^{(n)}})_{n\in\Naturals}$ follows directly from \Cref{lemma:From_RN_to_Banach}. \change{Recall that this lemma transforms an \ac{OM} functional on the sequence space $X$ into an \ac{OM} functional on the separable Banach space $Z$, where $X$ and $Z$ are related by the synthesis operator $S_{\psi}:X\to Z$ and coordinate operator $T_{\Psi}:Z\to \Reals^{\Naturals}$.}

	\medskip
	
	We now give the proofs of the non-trivial first and third steps.
	
	\medskip

	\noindent\textbf{Proof of Step 1.}
	Let $n \in \Naturals \cup \{ \infty \}$.
	Since $\gamma^{(n)} \in \ell_{\alpha}^{p}$ by \Cref{lemma:Connection_between_scaling_sequences}, H\"{o}lder's inequality implies, for any $v \in \ell^{\infty}$,
	\[
		\norm{T^{(n)}v}_{\ell_{\alpha}^{p}}^{p}
		=
		\sum_{k \in \Naturals} \absval{\alpha_{k}^{-1} \gamma_{k}^{(n)} v_{k}}^{p}
		\leq
		\norm{\gamma^{(n)}}_{\ell_{\alpha}^{p}}^{p} \norm{v}_{\ell^{\infty}}^{p}
		<
		\infty,
	\]
	proving well-definedness of $T^{(n)}$.
	Consider the finite-rank operators
	\[
		T_{m}^{(n)} \colon \ell^{\infty} \to \ell_{\alpha}^{p},
		\qquad
		(v_{k})_{k\in\Naturals}
		\mapsto
		(\gamma_{k}^{(n)} v_{k})_{k = 1,\dots,m},
		\qquad
		m\in\Naturals.
	\]
	Then $\norm{T_{m}^{(n)} - T^{(n)}} \to 0$ as $m \to \infty$, since H\"{o}lder's inequality implies, for any $v \in \ell^{\infty}$ with $\norm{v}_{\ell^{\infty}} \leq 1$,
	\[
		\norm{(T_{m}^{(n)} - T^{(n)})\, v}_{\ell_{\alpha}^{p}}^{p}
		=
		\sum_{k > m} \absval{\alpha_{k}^{-1} \gamma_{k}^{(n)} v_{k}}^{p}
		\leq
		\norm{v}_{\ell^{\infty}}^{p}
		\sum_{k > m} \absval{\alpha_{k}^{-1} \gamma_{k}^{(n)}}^{p}
		\leq
		\sum_{k > m} \absval{\alpha_{k}^{-1} \gamma_{k}^{(n)}}^{p},
	\]
	where the last term is independent of $v$ and goes to $0$ as $m\to\infty$ since $\gamma^{(n)} \in \ell_{\alpha}^{p}$.
	Hence, $T^{(n)}$ is a compact operator.
	Finally, $\norm{T^{(n)} - T^{(\infty)}} \to 0$ as $n\to\infty$, since H\"{o}lder's inequality implies, for any $v \in \ell^{\infty}$ with $\norm{v}_{\ell^{\infty}} \leq 1$,
	\[
		\norm{(T^{(n)} - T^{(\infty)})\, v}_{\ell_{\alpha}^{p}}^{p}
		=
		\sum_{k \in \Naturals} \Absval{\alpha_{k}^{-1} (\gamma_{k}^{(n)} - \gamma_{k}^{(\infty)}) v_{k}}^{p}
		\leq
		\norm{\gamma^{(n)}-\gamma^{(\infty)}}_{\ell_{\alpha}^{p}}^{p}
		\norm{v}_{\ell^{\infty}}^{p}
		\leq
		\norm{\gamma^{(n)}-\gamma^{(\infty)}}_{\ell_{\alpha}^{p}}^{p},
	\]
	where the last term is independent of $v$ and goes to $0$ as $n\to\infty$ by assumption.
	It follows that
	\begin{align*}
		\norm{a_{n} T^{(n)} - a_{\infty} T^{(\infty)}}
		&\leq
		\norm{a_{n} (T^{(n)} - T^{(\infty)})}
		+
		\norm{(a_{n} - a_{\infty}) T^{(\infty)}}
		\\
		&\leq
		(\sup_{n \in \Naturals} \absval{a_{n}})
		\underbrace{\norm{(T^{(n)} - T^{(\infty)})}}_{\to\, 0}
		+
		\underbrace{\absval{a_{n} - a_{\infty}}}_{\to\, 0}
		\norm{T^{(\infty)}}
		\\
		&\xrightarrow[n\to\infty]{}
		0.
	\end{align*}

	\smallskip

	\noindent\textbf{Proof of Step 3.}
	Let $(x^{(\nu)})_{\nu \in \Naturals}$ be a sequence in $K_{t}^{\circ}$.
	If there exists $n \in \Naturals$ such that $x^{(\nu)} \in K_{t}^{(n)}$ infinitely often, then there is nothing to show, since $K_{t}^{(n)}$ is pre-compact.
	Otherwise, there exist subsequences $(x^{(\nu_{j})})_{j \in \Naturals}$ and $(K_{t}^{(n_{j})})_{j \in \Naturals}$ such that $x^{(\nu_{j})} \in K_{t}^{(n_{j})}$ for each $j \in \Naturals$.
	By the definition of $K_{t}^{(n)}$, there exist $v^{(j)} \in \cBallExtra{0}{1}{\ell^{\infty}}$ such that $a_{n_{j}} T^{(n_{j})} v^{(j)} = x^{(\nu_{j})} - m^{(n_{j})}$.
	Since $K_{t}^{(\infty)}$ is pre-compact, the sequence $(w^{(j)})_{j\in\Naturals}$ given by $w^{(j)} \defeq m^{(\infty)} + a_{\infty} T^{(\infty)} v^{(j)} \in K_{t}^{(\infty)}$ has a subsequence --- which for simplicity we also denote by $(w^{(j)})_{j \in \Naturals}$ --- that converges to some element $w \in X$.
	It follows that, as $j\to\infty$,
	\begin{align*}
		\norm{x^{(\nu_{j})} - w}_{X}
		& \leq
		\norm{(x^{(\nu_{j})} - m^{(n_{j})}) - (w^{(j)} - m^{(\infty)})}_{X}
		+ \norm{m^{(n_{j})} - m^{(\infty)}}_{X}
		+ \norm{w^{(j)} - w}_{X}
		\\
		&\leq
		\underbrace{\norm{a_{n_{j}}T^{(n_{j})} - a_{\infty} T^{(\infty)}}}_{\to 0}
		\underbrace{\norm{v^{(j)}}_{\ell^{\infty}}}_{\leq 1}
		+ \underbrace{\norm{m^{(n_{j})} - m^{(\infty)}}_{X}}_{\to 0}
		+ \underbrace{\norm{w^{(j)} - w}_{X}}_{\to 0}
		\\
		&\to
		0,
	\end{align*}
	and thus $(x^{(\nu)})_{\nu \in \Naturals}$ has a convergent subsequence and $K_{t}^{\circ}$ is sequentially pre-compact.
\end{proof}

The following result concerns $\Gamma$-convergence of \ac{OM} functionals.
As in \Cref{thm:Equicoercivity_for_product_measures}, one is given a sequence of probability measures, where each probability measure $\mu^{(n)}$ is defined by an absolutely continuous reference measure $\mu_{0}^{(n)}$, a shift vector $m^{(n)}$, and a scaling vector $\gamma^{(n)}$, and each probability measure has an \ac{OM} functional.
Again, we assume convergence in $X$ of the sequence of shift vectors and the sequence of scaling vectors.
However, we replace the assumption of pointwise convergence of the sequence of Lebesgue densities in \Cref{thm:Equicoercivity_for_product_measures} with the assumption of local uniform convergence from below of the negative log-densities and assume the \ac{OM} functionals to have the specific form $I_{\mu^{(n)}} = \logden_{\gamma^{(n)},m^{(n)}}^{(n)}$.
Under these assumptions, we obtain $\Gamma$-convergence of the \ac{OM} functionals.

\begin{theorem}[$\mathsf{\Gamma}$-convergence for product measures]
	\label{thm:Gamma_convergence_for_product_measures}
	For $n\in\Naturals \cup \{ \infty \}$, let $\mu^{(n)}\in \prob{X}$ be probability measures on the same space $X = \ell_{\alpha}^{p}$ that satisfy \Cref{assump:basic_assumptions_for_product_measures} \ref{item:basic_assumption_product_full_measure}--\ref{item:basic_assumption_product_scaled_marginals}
	with shift parameters $m^{(n)} \in X$, scale parameters $\gamma^{(n)} \in \Reals_{> 0}^{\Naturals}$ and probability densities $\rho^{(n)}$ of the measures $\mu_{0}^{(n)}\in \prob{\Reals}$.
	Let $\logden^{(n)} \colon \Reals \to \eReals_{\geq 0}$ and $\logden_{\gamma^{(n)},m^{(n)}}^{(n)} \colon X \to \eReals$ denote the corresponding (formal) negative log-densities (see \Cref{def:logdensity}).
	Assume that $\norm{m^{(n)} - m^{(\infty)}}_{X}\to 0$, $\norm{\gamma^{(n)} - \gamma^{(\infty)}}_{X}\to 0$, that $\logden^{(n)} \to \logden^{(\infty)}$ locally uniformly as $n\to\infty$, that $\logden^{(n)} \leq \logden^{(\infty)}$ for all but finitely many $n\in\Naturals$ and that $I_{\mu^{(n)}} \colon X \to \eReals$ with $I_{\mu^{(n)}} = \logden_{\gamma^{(n)},m^{(n)}}^{(n)}$ defines an \ac{OM} functional for $\mu^{(n)}$ for each $n\in\Naturals \cup \{ \infty \}$.
	Then $I_{\mu^{(n)}} \xrightarrow[n \to \infty]{\Gamma} I_{\mu^{(\infty)}}$.
	Further, using \Cref{notation:From_RN_to_Banach} and assuming $S_{\psi}$ to be an isometry, $I_{\mu_{\psi}^{(n)}} \xrightarrow[n \to \infty]{\Gamma} I_{\mu_{\psi}^{(\infty)}}$ where $I_{\mu_{\psi}^{(n)}},\ n\in\Naturals$, are defined by \eqref{equ:OM_from_RN_to_Banach}.
\end{theorem}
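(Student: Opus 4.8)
The plan is to check the two defining sequential inequalities of $\Gamma$-convergence directly for the functionals $I_{\mu^{(n)}} = \logden_{\gamma^{(n)},m^{(n)}}^{(n)}$ on $X = \ell_{\alpha}^{p}$, and then to carry the conclusion over to $Z$ exactly as in the proof of \Cref{lemma:From_RN_to_Banach}. Three elementary facts will be used repeatedly: (i)~convergence in $\ell_{\alpha}^{p}$ implies coordinatewise convergence, so in particular $m^{(n)} \to m^{(\infty)}$ and $\gamma^{(n)} \to \gamma^{(\infty)}$ coordinatewise and each coordinate of $\gamma^{(\infty)}$ is strictly positive; (ii)~by \Cref{assump:basic_assumptions_for_product_measures}\ref{item:basic_assumption_product_mu_0} every $\logden^{(n)}$ is continuous, symmetric and vanishes at $0$, with $\logden^{(n)}|_{\Reals_{\geq 0}}$ strictly increasing and $\logden^{(n)}(t) \to \infty$ as $t \to \infty$; and (iii)~if $u^{(n)} \to u$ in $\Reals$ then $\logden^{(n)}(u^{(n)}) \to \logden^{(\infty)}(u)$, since $\{u^{(n)}\}_{n} \cup \{u\}$ is compact, $\logden^{(n)} \to \logden^{(\infty)}$ uniformly on it, and $\logden^{(\infty)}$ is continuous.

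For the $\liminf$ inequality I would take any $x^{(n)} \to x$ in $X$, fix $K \in \Naturals$, discard the tail $k > K$ using nonnegativity of the summands of $\logden_{\gamma^{(n)},m^{(n)}}^{(n)}$, apply (iii) to each of the first $K$ arguments $(\gamma_{k}^{(n)})^{-1}(x_{k}^{(n)} - m_{k}^{(n)})$ (which converge coordinatewise by (i)), pass to $\liminf_{n}$ in the resulting finite sum, and finally let $K \to \infty$. This gives $\liminf_{n} I_{\mu^{(n)}}(x^{(n)}) \geq \logden_{\gamma^{(\infty)},m^{(\infty)}}^{(\infty)}(x) = I_{\mu^{(\infty)}}(x)$, using only local uniform convergence and nonnegativity --- the domination hypothesis is not needed here.

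For the recovery sequence the case $x \notin E_{\gamma^{(\infty)},m^{(\infty)}}$ is vacuous since then $I_{\mu^{(\infty)}}(x) = +\infty$, so assume $x \in E_{\gamma^{(\infty)},m^{(\infty)}}$ and put $y_{k} \defeq x_{k} - m_{k}^{(\infty)}$. Then $\sum_{k} \logden^{(\infty)}\bigl((\gamma_{k}^{(\infty)})^{-1} y_{k}\bigr) = I_{\mu^{(\infty)}}(x) < \infty$, so its summands are bounded, and by (ii) the sequence $\bigl((\gamma_{k}^{(\infty)})^{-1} y_{k}\bigr)_{k}$ is bounded, say by $C$. I would then use the \emph{reparametrised} recovery sequence
\[
	x_{k}^{(n)} \defeq m_{k}^{(n)} + \frac{\gamma_{k}^{(n)}}{\gamma_{k}^{(\infty)}}\, y_{k}, \qquad k, n \in \Naturals .
\]
Here $x^{(n)} - x = (m^{(n)} - m^{(\infty)}) + \bigl((\gamma_{k}^{(n)} - \gamma_{k}^{(\infty)})\, y_{k}/\gamma_{k}^{(\infty)}\bigr)_{k}$, and since the factors $y_{k}/\gamma_{k}^{(\infty)}$ are bounded by $C$ the second summand has $\ell_{\alpha}^{p}$-norm at most $C\,\norm{\gamma^{(n)} - \gamma^{(\infty)}}_{X} \to 0$; hence $x^{(n)} \to x$ in $X$. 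By construction the argument $(\gamma_{k}^{(n)})^{-1}(x_{k}^{(n)} - m_{k}^{(n)}) = (\gamma_{k}^{(\infty)})^{-1} y_{k}$ no longer depends on $n$, so the pointwise domination $\logden^{(n)} \leq \logden^{(\infty)}$ (valid for all but finitely many $n$) yields $I_{\mu^{(n)}}(x^{(n)}) = \sum_{k} \logden^{(n)}\bigl((\gamma_{k}^{(\infty)})^{-1} y_{k}\bigr) \leq \sum_{k} \logden^{(\infty)}\bigl((\gamma_{k}^{(\infty)})^{-1} y_{k}\bigr) = I_{\mu^{(\infty)}}(x)$ for all but finitely many $n$, and therefore $\limsup_{n} I_{\mu^{(n)}}(x^{(n)}) \leq I_{\mu^{(\infty)}}(x)$.

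Finally, the passage to $Z$ mirrors \Cref{lemma:From_RN_to_Banach}: $\range S_{\psi}$ is closed in $Z$ and $S_{\psi}$ is an isometry with $T_{\psi} \circ S_{\psi} = \Id_{X}$. For the $\liminf$ inequality in $Z$, if $h \notin \range S_{\psi}$ then $h^{(n)} \notin \range S_{\psi}$ for large $n$ and both sides are $+\infty$, while if $h = S_{\psi} x$ the indices with $h^{(n)} \in \range S_{\psi}$ give, after applying $T_{\psi}$, a sequence converging to $x$ in $X$ to which the $X$-result applies, the remaining indices contributing $+\infty$. For the recovery sequence in $Z$ one pushes an $X$-recovery sequence for $x = T_{\psi} h$ forward by $S_{\psi}$. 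The hard part of the whole proof is the recovery sequence: the naive choice $x^{(n)} \defeq x - m^{(\infty)} + m^{(n)}$ feeds $\logden^{(n)}$ the $n$-dependent arguments $(\gamma_{k}^{(n)})^{-1} y_{k}$, which for $\gamma_{k}^{(n)} < \gamma_{k}^{(\infty)}$ are \emph{not} dominated by the summable sequence $\bigl(\logden^{(\infty)}((\gamma_{k}^{(\infty)})^{-1} y_{k})\bigr)_{k}$; the reparametrisation above freezes the arguments, and the only place one needs $x \in E_{\gamma^{(\infty)},m^{(\infty)}}$ is to ensure that $\bigl((\gamma_{k}^{(\infty)})^{-1} y_{k}\bigr)_{k}$ is bounded so that the reparametrised sequence still converges in $\ell_{\alpha}^{p}$.
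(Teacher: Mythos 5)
Your proposal is correct and follows essentially the same route as the paper: the recovery sequence $x_{k}^{(n)} = m_{k}^{(n)} + (\gamma_{k}^{(n)}/\gamma_{k}^{(\infty)})(x_{k} - m_{k}^{(\infty)})$ is exactly the paper's choice \eqref{eq:Choice_sequence_Gamma_lim_sup_inequality_product_measures}, and your finiteness/boundedness argument and the convergence estimate via $\norm{\gamma^{(n)}-\gamma^{(\infty)}}_{X}$ match the paper's. The only cosmetic differences are that you write out Fatou's lemma by truncation for the $\liminf$ inequality and exploit the frozen arguments to get the $\limsup$ bound termwise instead of invoking the reverse Fatou lemma.
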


\begin{proof}
	For the $\Gamma$-$\liminf$ inequality, let $(x^{(n)})_{n\in\Naturals}$ be a sequence in $X$ that converges to $x\in X$ as $n\to\infty$.
	Then, by Fatou's lemma,
	\begin{align*}
		I_{\mu^{(\infty)}}(x)
		&=
		\sum_{k \in \Naturals} \logden^{(\infty)}\bigl( (\gamma_{k}^{(\infty)})^{-1}(x_{k} - m_{k}^{(\infty)}) \bigr) & & \text{by assumption}
		\\
		&=
		\sum_{k \in \Naturals} \lim_{n \to \infty} \logden^{(n)}\bigl( (\gamma_{k}^{(n)})^{-1}(x_{k}^{(n)} - m_{k}^{(n)})\bigr) & & \text{since $\logden^{(n)} \to \logden^{(\infty)}$ locally uniformly}
		\\
		&\leq
		\liminf_{n \to \infty} \sum_{k \in \Naturals} \logden^{(n)}\bigl( (\gamma_{k}^{(n)})^{-1}(x_{k}^{(n)} - m_{k}^{(n)})\bigr) & & \text{by Fatou's lemma}
		\\
		&=
		\liminf_{n \to \infty} I_{\mu^{(n)}}(x^{(n)}) & & \text{by assumption}.
	\end{align*}
	Note that Fatou's lemma is general enough to handle extended real-valued sequences, so we do not need to treat cases such as $I_{\mu^{(\infty)}}(x) = \infty$ separately.
	For the $\Gamma$-$\limsup$ inequality, let $x\in X$ and choose the sequence $(x^{(n)})_{n\in\Naturals}$ in $X$ by
	\begin{equation}
		\label{eq:Choice_sequence_Gamma_lim_sup_inequality_product_measures}
		x_{k}^{(n)}
		\defeq
		m_{k}^{(n)} + \frac{\gamma_{k}^{(n)}}{\gamma_{k}^{(\infty)}} (x_{k} - m_{k}^{(\infty)}).
	\end{equation}
	If $I_{\mu^{(\infty)}}(x) = \logden_{\gamma^{(\infty)},m^{(\infty)}}^{(\infty)}(x) = \infty$, then there is nothing to show (simply choose $x^{(n)} \defeq x$ for all $n \in \Naturals$).
	Now suppose that $I_{\mu^{(\infty)}}(x) = \logden_{\gamma^{(\infty)},m^{(\infty)}}^{(\infty)}(x)$ is finite.
	\change{By \ref{item:basic_assumption_product_mu_0} -- the assumption that the reference density $\rho$ is continuous, symmetric, and monotonically decreasing -- and the formula \eqref{eq:negative_log-density} -- which states that $		\logden(u)	\defeq		- \log \frac{\rho(u)}{\rho(0)}		=
		\log \rho(0) - \log \rho(u)$ -- it follows that }$\logden^{(\infty)}$ is monotonically increasing, with $\logden^{(\infty)}(x) \to \infty$ as $\absval{x} \to \infty$.
	If the terms $\frac{x_{k} - m_{k}^{(\infty)}}{\gamma_{k}^{(\infty)}}$ are unbounded, then this implies that the $\logden^{(\infty)}(\frac{x_{k} - m_{k}^{(\infty)}}{\gamma_{k}^{(\infty)}})$ are unbounded, and hence that $I_{\mu^{(\infty)}}(x)$ is not finite.
	By taking the contrapositive, it follows that if $I_{\mu^{(\infty)}}(x) = \logden_{\gamma^{(\infty)},m^{(\infty)}}^{(\infty)}(x)$ is finite, then

	\[
	S \defeq \sup_{k\in\Naturals} \Absval{\frac{x_{k} - m_{k}^{(\infty)}}{\gamma_{k}^{(\infty)}}} < \infty.
	\]
	By \Cref{lemma:Connection_between_scaling_sequences}, $\gamma^{(n)} \in \ell_{\alpha}^{p}$.
	By \eqref{eq:Choice_sequence_Gamma_lim_sup_inequality_product_measures}, $(x_{k}^{(n)} - m_{k}^{(n)}) - (x_{k} - m_{k}^{(\infty)})=(x_{k}-m_{k}^{(\infty)})(1-\tfrac{\gamma_{k}^{(n)}}{\gamma_{k}^{(\infty)}})$.
	Thus,
	\begin{align*}
		\norm{(x^{(n)} - m^{(n)}) - (x - m^{(\infty)})}_{X}^{p}
		&=
		\sum_{k \in \Naturals} \Absval{\frac{(x_{k}^{(n)} - m_{k}^{(n)}) - (x_{k} - m_{k}^{(\infty)})}{\alpha_{k}}}^{p}
		\\
		&=
		\sum_{k \in \Naturals} \Absval{\frac{x_{k} - m_{k}^{(\infty)}}{\gamma_{k}^{(\infty)}}}^{p}
		\Absval{\frac{\gamma_{k}^{(n)} - \gamma_{k}^{(\infty)}}{\alpha_{k}}}^{p}
		\\
		&\leq
		S^{p} \, \norm{\gamma^{(n)} - \gamma^{(\infty)}}_{\ell_{\alpha}^{p}}^{p}
		\\
		&\xrightarrow[n\to\infty]{}
		0.
	\end{align*}
	It follows that
	\[
	\norm{x^{(n)} - x}_{X}^{p}
	\leq
	\norm{(x^{(n)} - m^{(n)}) - (x - m^{(\infty)})}_{X}^{p}
	+
	\norm{m^{(n)} - m^{(\infty)}}_{X}^{p}
	\xrightarrow[n\to\infty]{}
	0.
	\]
	Using the reverse Fatou lemma and that $\logden^{(n)} \leq \logden^{(\infty)}$ for all but finitely many $n\in\Naturals$,
	\begin{align*}
		I_{\mu^{(\infty)}}(x)
		&=
		\sum_{k \in \Naturals} \logden^{(\infty)}\bigl( (\gamma_{k}^{(\infty)})^{-1}(x_{k} - m_{k}^{(\infty)}) \bigr)
		& & \text{by assumption}
		\\
		&=
		\sum_{k \in \Naturals} \lim_{n \to \infty} \logden^{(n)}\bigl( (\gamma_{k}^{(\infty)})^{-1}(x_{k} - m_{k}^{(\infty)}) \bigr)
		& & \text{since $\logden^{(n)} \to \logden^{(\infty)}$ pointwise}
		\\
		&\geq
		\limsup_{n \to \infty} \sum_{k \in \Naturals} \logden^{(n)}\bigl( (\gamma_{k}^{(\infty)})^{-1}(x_{k} - m_{k}^{(\infty)}) \bigr)
		& & \text{by the reverse Fatou lemma}
		\\
		&=
		\limsup_{n \to \infty} \sum_{k \in \Naturals} \logden^{(n)}\bigl( (\gamma_{k}^{(n)})^{-1}(x_{k}^{(n)} - m_{k}^{(n)})\bigr)
		& & \text{by \eqref{eq:Choice_sequence_Gamma_lim_sup_inequality_product_measures}}
		\\
		&=
		\limsup_{n \to \infty} I_{\mu^{(n)}}(x^{(n)})
		& & \text{by assumption}.
	\end{align*}
	$I_{\mu_{\psi}^{(n)}} \xrightarrow[n \to \infty]{\Gamma} I_{\mu_{\psi}^{(\infty)}}$ follows directly from \Cref{lemma:From_RN_to_Banach}.
	For the $\Gamma$-$\liminf$ inequality, we additionally use that $\range S_{\psi}$ is complete and therefore closed in $Z$.
\end{proof}

While the proof of equicoercivity (\Cref{thm:Equicoercivity_for_product_measures}) only uses the inequality $I_{\mu^{(n)}} \geq \logden_{\gamma^{(n)},m^{(n)}}^{(n)}$, which holds by \Cref{thm:OM_for_product_measures}, the $\mathsf{\Gamma}$-convergence of the corresponding \ac{OM} functionals relies on the complete knowledge of the \ac{OM} functionals which are \emph{assumed} to be given by $I_{\mu^{(n)}} = \logden_{\gamma^{(n)},m^{(n)}}^{(n)}$.
This assumption is proven in \Cref{thm:OM_for_product_measures} only for certain product measures.
For example, \Cref{thm:OM_for_product_measures} \change{applies to Cauchy measures and  Besov-$p$ measures with $p\in[1,2]$ (cf.\ \Cref{thm:OM_functional_for_Besov_with_p_in_1_2,cor:OM_Cauchy}), but} does not apply for Besov-$p$ measures with $p>2$, because $m + \ell_{\gamma}^{2} \subsetneqq E_{\gamma,m}$ in this case.
Therefore, \Cref{conjecture:OM_for_product_measures} remains an important open problem.

\subsection{Application to Besov measures}
\label{sec:Gamma_Besov}

This section considers the $\Gamma$-convergence of \ac{OM} functionals of Besov measures as introduced by \citet{LassasSaksmanSiltanen2009} and \citet{DashtiHarrisStuart2012}.\footnote{We are slightly more general in that we consider shifted Besov measures.}
We will consider Besov $B_{p}^{s}$ measures with integrability parameter $1 \leq p \leq 2$ and smoothness $s \in \Reals$, in contrast to the analysis of Part~I of this paper \citep[Sections~5.1 and 5.2]{AyanbayevKlebanovLieSullivan2021_I}, which was limited to the cases $p \in \{ 1, 2 \}$.

Throughout this subsection, we make use of the following notation:
\begin{notation}
	\label{notation:General_assumption_Besov}
	Let $s \in \Reals$, $d\in\Naturals$, $1\leq p \leq 2$, $\eta > 0$, $t \defeq s - p^{-1} d (1+\eta)$ and assume that
	$\tau \defeq (s/d + 1/2)^{-1} > 0$.
	Define $\gamma_{0} \defeq 1$ and $\gamma,\delta \in \Reals^{\Naturals}$ by
	\[
	\gamma_{k}
	\defeq
	k^{-\tfrac{1}{\tau} + \tfrac{1}{p}},
	\qquad
	\delta_{k}
	\defeq
	k^{-\tfrac{1}{\tau} + \tfrac{2 + \eta}{p}},
	\qquad
	k\in\Naturals,
	\]
	as well as the probability measures $\mu_{k}$, $k \in\Naturals \cup \{0\}$, on $\Reals$ with probability densities
	\[
	\frac{\rd \mu_{k}}{\rd u} (u)
	=
	\frac{1}{2\gamma_{k} \Gamma (1 + 1/p)} \, \exp\biggl(-\Absval{ \frac{u - m_{k}}{\gamma_{k}}}^p\biggr),
	\]
	where $m \in \ell_{\delta}^{p}$ is some fixed shift.
	Further, let $Z^{0}$ be a separable Hilbert space\footnote{Typically, Besov measures are introduced on the space $Z^{0} = L^2(\Torus^{d})$ with an orthonormal wavelet basis $\psi$ of sufficient regularity, in which case $X_{p}^{s}$ coincides with the Besov space $B^{s}_{p p} (\Torus^{d})$ --- as defined by \citet{Triebel1983} --- and $X_{2}^{s}$ coincides with the Sobolev space $H^{s} (\Torus^{d})$.
	In our more general definition, the dimension $d$ becomes superfluous and one could work with $\tilde{s} \defeq s / d$, but we continue to use the classical notation in order to reduce confusion.}
	with complete orthonormal basis $\psi = (\psi_{k})_{k \in \Naturals}$ and $\overline{S}_{\psi}\colon \Reals^{\Naturals} \to \prod_{k \in \Naturals}\spn{\psi_{k}}$, $c \mapsto \sum_{k \in \Naturals} c_{k} \psi_{k}$.
	We emphasise that the direct product $\prod_{k\in\Naturals}\spn{\psi_{k}}$ is neither $\spn \psi$ nor $Z^{0}$. In \Cref{cor:Q_space_Bps}, we state how $\overline{S}_{\psi}$ here is related to the \change{synthesis} operator $S_{\psi}:X \to Z$ from \Cref{notation:From_RN_to_Banach}.
\end{notation}

The role of $\eta,t$ and $\delta$ will be explained in \Cref{rem:Support_of_Besov}, where we discuss normed spaces of full Besov measure.
We define (shifted) Besov measures as follows, using notation that is an adaptation of that of \citet{DashtiHarrisStuart2012}:

\begin{definition}[sequence space Besov measures and Besov spaces]
	\label{def:Besov_space_and_measure_sequence}
	Using \Cref{notation:General_assumption_Besov}, we call $\mu \defeq \bigotimes_{k \in \Naturals} \mu_{k}$ a (\defterm{sequence space}) \defterm{Besov measure} on $\Reals^{\Naturals}$ and write $B^{s}_{p} \defeq B^{s,m,d}_{p}\defeq \mu$.
	The corresponding \defterm{Besov space} is the weighted sequence space $(X^{s}_{p},\norm{\quark}_{X^{s}_{p}}) \defeq (\ell^{p}_{\gamma},\norm{\quark}_{\ell^{p}_{\gamma}})$.
\end{definition}

\begin{definition}[Hilbert space Besov measures and Besov spaces]
	\label{def:Besov_space_and_measure_Hilbert}
	Using \Cref{notation:General_assumption_Besov}, if $\rv{v}_{k} \sim \mu_{k}$ are independent random variables, then we call $\rv{u} \defeq \sum_{k \in \Naturals} \rv{v}_{k} \psi_{k}$ a \defterm{Besov-distributed random variable} and its law a \defterm{Besov measure}, denoted by $B^{s}_{p}(\psi) \defeq B^{s,m,d}_{p}(\psi)$.
	Furthermore, let
	\begin{align*}
		\tilde{X}^{s}_{p}
		& \defeq
		\overline{S}_{\psi} (\ell_{\gamma}^{p}),
		&
		\Norm{\overline{S}_{\psi}(c)}_{X^{s}_{p}}
		& \defeq
		\norm{c}_{\ell_{\gamma}^{p}},
		\quad
		c\in \ell_{\gamma}^{p},
	\end{align*}
	and define the \defterm{Besov space} $X^{s}_{p} =  X^{s}_{p}(\psi)$ as the completion of $\tilde{X}^{s}_{p}$ with respect to $\norm{ \quark }_{X^{s}_{p}}$.
	By Parseval's identity, the initial space $Z^{0}$ coincides with the Besov space $X^{0}_{2}$.
\end{definition}

\begin{remark}
	Since it is the parameter $p$ that most strongly affects the qualitative properties of the measure, we often refer simply to a ``Besov-$p$ measure'' for any measure in the above class, regardless of the values of $s$, $d$, etc.
	The scaling of the Besov-2 measure corresponds to the ``physicist's Gaussian distribution'' rather than the ``probabilist's Gaussian distribution''.
	In particular, for $p = 2$, $\rv{v}_{k} \sim \mu_{k}$ has variance $\frac{1}{2} \gamma_{k}^{2}$.
	A consequence of this is that the \ac{OM} functional of the Besov-$p$ measure will be $\norm{ \quark }_{X^{s}_{p}}^{p}$, i.e.\ appears to lack a prefactor of $\frac{1}{p}$ relative to the Gaussian \ac{OM} functional --- one half of the square of the Cameron--Martin norm --- given by \citet[Section~5.1]{AyanbayevKlebanovLieSullivan2021_I}. 
\end{remark}

\begin{remark}
	\label{rem:Support_of_Besov}
	Note that the random variable $\rv{u} = \sum_{k \in \Naturals} \rv{v}_{k} \psi_{k}$ in \Cref{def:Besov_space_and_measure_Hilbert} takes values in a space $Z$ that may be larger than $Z^{0}$.
	It has already been shown by \citet[Lemma~2]{LassasSaksmanSiltanen2009} that, for $\tilde{t} \in \Reals$,
	\begin{equation*}
		\norm{ \rv{u} - \overline{S}_{\psi}(m) }_{X^{\tilde{t}}_{p}} < \infty \text{ a.s.}
		\iff
		\mathbb{E} \bigl[ \exp ( \alpha \norm{ \rv{u} - \overline{S}_{\psi}(m) }_{X^{\tilde{t}}_{p}}^{p} ) \bigr] < \infty \text{ for all $\alpha \in (0, \tfrac{1}{2}]$}
		\iff
		\tilde{t} < s - \frac{d}{p} .
	\end{equation*}
	Hence, using the choice $t \defeq s - p^{-1} d (1+\eta)$ in \Cref{notation:General_assumption_Besov}, $Z$ can be chosen as the Besov space $X^{t}_{p}(\psi) = \overline{S}_{\psi}(\ell_{\delta}^{p})$, i.e.\ ``just a bit larger than'' $X^{s - d / p}_{p}(\psi) = \overline{S}_{\psi}(\ell_{\gamma}^{p})$.
	The shift by $m \in \ell_{\delta}^{p}$ does not cause problems, since $\overline{S}_{\psi}(m)\in X^{t}_{p}(\psi)$.
	For the sequence space Besov measure $\mu = B^{s}_{p}$ on $\Reals^{\Naturals}$, the space $X_{p}^{t}  = \ell_{\delta}^{p}$ has full $\mu$-measure.
\end{remark}

	Given \Cref{rem:Support_of_Besov}, we will from now on consider the Besov measures $\mu = B^{s}_{p}$ and $\mu = B^{s}_{p}(\psi)$ as measures on the normed spaces $X=X_{p}^{t}$ and $Z=X_{p}^{t}(\psi)$, respectively.

Apart from the different degree of summability ($2$ in place of $p$), the next result can be interpreted as saying that the shifts $h$ with respect to which the $B_{p}^{s}$ measure is quasi-invariant are $\tfrac{d}{2}$ degrees smoother than the typical draws from that measure.
For $p=1$, the corresponding result was obtained in \citet[Lemma~3.5]{AgapiouBurgerDashtiHelin2018}, without using Shepp's theorem.

\change{In preparation for the next two results, we recall \Cref{notation:From_RN_to_Banach}: $X = \ell_{\alpha}^{p}$ for some $1 \leq p < \infty$ and $\alpha \in \Reals_{> 0}^{\Naturals}$, $Z$ is a separable Banach space with Schauder basis $\psi = (\psi_{k})_{k\in\Naturals}$, the synthesis operator $	S_{\psi}  \colon X \to Z$ satisfies $x = (x_{k})_{k \in \Naturals} \mapsto \sum_{k \in \Naturals} x_{k}\psi_{k}$, and the coordinate operator $T_{\psi}  \colon Z \to \Reals^{\Naturals}$ satisfies $z = \sum_{k \in \Naturals} v_{k} \psi_{k} \mapsto (v_{k})_{k\in\Naturals}$.	If $\mu \in \prob{X}$, then $\mu_{\psi} \defeq (S_{\psi})_{\#} \mu$ is the push-forward of $\mu$ under $S_{\psi}$. For the following result, $  X^{t}_{p}(\psi)$ and $B^{s}_{p}(\psi)$ are given in \Cref{def:Besov_space_and_measure_Hilbert}.} 
\begin{corollary}[Shift-quasi-invariance space and shift density of a Besov measure]
\label{cor:Q_space_Bps}
	Let $\mu = B^{s}_{p}$ be the sequence space Besov measure on $\Reals^{\Naturals}$ or on $X = X_{p}^{t} = \ell_{\delta}^{p}$.
	Then $Q(\mu) = \ell_{\gamma}^{2} = X_{2}^{s + \frac{d}{2} - \frac{d}{p}}$ and, for any $h \in Q(\mu)$ and $x\in\Reals^\Naturals$ (respectively $x\in X$),
	\begin{equation}
		\label{eq:Radon_nikodym_derivative_of_mu_h_wrt_mu_Besov}
		r_{h}^{\mu}(x)
		=
		\exp\left( \sum_{k \in \Naturals} \gamma_{k}^{-p} \bigl( \absval{x_{k} - m_{k}}^{p} - \absval{x_{k} - m_{k} -h_{k}}^{p} \bigr) \right).
	\end{equation}
	Further, using \Cref{notation:From_RN_to_Banach} with $\alpha = \delta$ and $Z = X^{t}_{p}(\psi)  = \overline{S}_{\psi}(\ell_{\delta}^{p})$, we have $S_{\psi} = \overline{S}_{\psi}|_{\ell_{\delta}^{p}}$ and $\mu_{\psi} = B^{s}_{p}(\psi)$.
	Then $Q(\mu_{\psi})= S_{\psi}(\ell_{\gamma}^{2}) = X_{2}^{s + \frac{d}{2} - \frac{d}{p}} (\psi)$ and, for any $h \in Q(\mu_{\psi})$ and $z\in Z$, $r_{h}^{\mu_{\psi}}(z) = r_{T_{\psi}(h)}^{\mu}(T_{\psi}(z))$.
\end{corollary}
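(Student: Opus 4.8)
The plan is to recognise $\mu = B^s_p$ as an instance of the product-measure class of \Cref{assump:basic_assumptions_for_product_measures} and then invoke \Cref{thm:shift_quasi_invariance_space_for_product_measures} directly. First I would check that \Cref{assump:basic_assumptions_for_product_measures} \ref{item:basic_assumption_product_mu_0}--\ref{item:basic_assumption_product_Shepp_condition} hold: by \Cref{notation:General_assumption_Besov} and \Cref{def:Besov_space_and_measure_sequence}, $\mu = \bigotimes_{k\in\Naturals}\mu_k$ with $\mu_k(A) = \mu_0(\gamma_k^{-1}(A - m_k))$ and $m \in \ell^p_\delta = X^t_p = X$, which is \ref{item:basic_assumption_product_scaled_marginals}; and the reference density $\rho(u) = \bigl(2\Gamma(1+1/p)\bigr)^{-1}\exp(-\absval{u}^p)$ is continuous, symmetric and strictly decreasing on $\Reals_{\geq 0}$ because $p \geq 1$, which is \ref{item:basic_assumption_product_mu_0}. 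For \ref{item:basic_assumption_product_Shepp_condition} I would note that $\rho$ is everywhere positive and locally absolutely continuous ($C^1$ for $p > 1$, Lipschitz for $p = 1$) with $\rho'(u) = -p\absval{u}^{p-1}\mathrm{sgn}(u)\,\rho(u)$ for a.e.\ $u$, so that the substitution $v = u^p$ gives $\int_\Reals (\rho'(u))^2/\rho(u)\ud u = 2p\rho(0)\,\Gamma(2 - 1/p) < \infty$ since $p \geq 1 > 1/2$.

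Once the hypotheses are verified, \Cref{thm:shift_quasi_invariance_space_for_product_measures} gives $Q(\mu) = \ell^2_\gamma$ and $r^\mu_h(x) = \prod_{k\in\Naturals}\rho(\gamma_k^{-1}(x_k - m_k - h_k))/\rho(\gamma_k^{-1}(x_k - m_k))$, valid on all of $\Reals^\Naturals$ (and hence, by \Cref{remark:shift_properties_metric_independent}, unchanged when $\mu$ is viewed on $X = \ell^p_\delta$). Substituting the explicit $\rho$ cancels the normalising constants and turns the product into the single exponential \eqref{eq:Radon_nikodym_derivative_of_mu_h_wrt_mu_Besov}. To recover the smoothness label I would observe that the weight associated with $X^{s'}_2$ in the sense of \Cref{notation:General_assumption_Besov} (integrability $2$, smoothness $s'$) is $k^{-(s'/d + 1/2) + 1/2} = k^{-s'/d}$, which equals $\gamma_k = k^{-(s/d + 1/2) + 1/p}$ exactly when $s' = s + \tfrac{d}{2} - \tfrac{d}{p}$; hence $\ell^2_\gamma = X^{s + d/2 - d/p}_2$.

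For the transfer to $Z$, I would first note that \ref{item:basic_assumption_product_full_measure} holds with $\alpha = \delta$ by \Cref{rem:Support_of_Besov}. Then the operator $S_\psi$ of \Cref{notation:From_RN_to_Banach} built from $X = \ell^p_\delta$ and $Z = X^t_p(\psi) = \overline{S}_\psi(\ell^p_\delta)$ is, by construction, $\overline{S}_\psi|_{\ell^p_\delta}$, and it is an isometric embedding; and since by \Cref{def:Besov_space_and_measure_Hilbert} $B^s_p(\psi)$ is the law of $\sum_k\rv{v}_k\psi_k$ with $(\rv{v}_k)_k \sim \mu$, it coincides with $(\overline{S}_\psi)_\#\mu = (S_\psi)_\#\mu = \mu_\psi$. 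The final part of \Cref{thm:shift_quasi_invariance_space_for_product_measures} then yields $Q(\mu_\psi) = S_\psi(\ell^2_\gamma)$ and $r^{\mu_\psi}_h(z) = r^\mu_{T_\psi(h)}(T_\psi(z))$ for $h \in Q(\mu_\psi)$, $z \in Z$. It remains to identify $S_\psi(\ell^2_\gamma)$ with $X^{s+d/2-d/p}_2(\psi)$: since $\ell^2_\gamma \subseteq \ell^p_\delta$ by \Cref{cor:handy_inclusion_of_weighted_lp_spaces_general}, $S_\psi(\ell^2_\gamma) = \overline{S}_\psi(\ell^2_\gamma) = \tilde{X}^{s+d/2-d/p}_2$, and because $\overline{S}_\psi|_{\ell^2_\gamma}$ is an isometry onto this image for $\norm{\quark}_{X^{s'}_2}$, the image is already complete and so equals its completion $X^{s+d/2-d/p}_2(\psi)$.

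I expect the only genuinely delicate point to be \ref{item:basic_assumption_product_Shepp_condition} at the endpoint $p = 1$, where $\rho(u) = \tfrac12 e^{-\absval{u}}$ fails to be differentiable at the origin; there I would argue directly that $\rho$ is Lipschitz (hence locally absolutely continuous) with $\rho'(u) = -\mathrm{sgn}(u)\,\rho(u)$ a.e., whence $\int_\Reals (\rho')^2/\rho = \int_\Reals \rho = 1 < \infty$. Beyond that, the work is purely organisational: keeping the three notations for weighted sequence spaces ($\ell^2_\gamma$, $X^{s'}_2$, and the weights of \Cref{notation:General_assumption_Besov}) and the two synthesis operators ($S_\psi$, $\overline{S}_\psi$) aligned.
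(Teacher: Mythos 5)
Your proposal is correct and follows exactly the route of the paper's own proof: verify \Cref{assump:basic_assumptions_for_product_measures} \ref{item:basic_assumption_product_mu_0}--\ref{item:basic_assumption_product_scaled_marginals} from \Cref{def:Besov_space_and_measure_sequence}, \ref{item:basic_assumption_product_full_measure} from \Cref{rem:Support_of_Besov}, and \ref{item:basic_assumption_product_Shepp_condition} by direct computation, then invoke \Cref{thm:shift_quasi_invariance_space_for_product_measures}. You additionally spell out the Fisher-information integral (your value $2p\rho(0)\Gamma(2-1/p)$ and the Lipschitz argument at $p=1$ are correct) and the bookkeeping identifying $\ell_{\gamma}^{2}$ with $X_{2}^{s+d/2-d/p}$, which the paper leaves as ``a straightforward computation.''
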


\begin{proof}
\change{\Cref{assump:basic_assumptions_for_product_measures} \ref{item:basic_assumption_product_mu_0} and \ref{item:basic_assumption_product_scaled_marginals}, which concern the continuity and symmetry of the reference density $\rho$ and the assumption that each $\mu_{k}$ is related to $\mu_{0}$ by an affine transformation respectively, are satisfied by virtue of \Cref{def:Besov_space_and_measure_sequence}. \Cref{assump:basic_assumptions_for_product_measures}\ref{item:basic_assumption_product_full_measure}, which concerns the assumption that $X=\ell_{\alpha}^{p}$ and $\mu(X)=1$, follows from \Cref{rem:Support_of_Besov}, while \ref{item:basic_assumption_product_Shepp_condition}, which states that the reference density $\rho$ has finite Fisher information, follows from a straightforward computation.  \Cref{thm:shift_quasi_invariance_space_for_product_measures} yields the formula \eqref{eq:Radon_nikodym_derivative_of_mu_h_wrt_mu_Besov} for $r_{h}^{\mu}$, the spaces $Q(\mu)$, $Q(\mu_{\psi})$, and the equation for $r_{h}^{\mu_{\psi}}$.}
\end{proof}

The following corollary is an application of \Cref{thm:OM_for_product_measures,thm:Equicoercivity_for_product_measures,thm:Gamma_convergence_for_product_measures} to Besov-$p$ measures $\mu,\mu^{(n)}$, $n\in\Naturals$, $1 \leq p \leq 2$, with different smoothness parameters $s,s^{(n)}$ and shifts $m,m^{(n)}$ such that $s^{(n)} \to s$ and $m^{(n)} \to m$ as $n\to\infty$.
Note that it is not entirely clear on which space $X$ to consider equicoercivity and $\Gamma$-convergence, since the measures $\mu,\mu^{(n)}$ seem to live on different spaces $X = \ell_{\delta}^{p}$, $X^{(n)} = \ell_{\delta^{(n)}}^{p}$ with
\[
\delta_{k} = k^{-\frac{s}{d} - \frac{1}{2} + \frac{2+\eta}{p}},
\qquad
\delta_{k}^{(n)} = k^{-\frac{s^{(n)}}{d} - \frac{1}{2} + \frac{2+\eta^{(n)}}{p}},
\qquad
\eta, \eta^{(n)} > 0,
\qquad
n\in\Naturals.
\]
After all, \Cref{thm:Equicoercivity_for_product_measures,thm:Gamma_convergence_for_product_measures} explicitly demand all measures $\mu, \mu^{(n)}$ to be defined on the \emph{same} space $X = \ell_{\alpha}^{p}$.
However, as we will see, the assumed convergence $s^{(n)} \to s$ guarantees the existence of such a common space $X$ of full $\mu^{(n)}$-measure for all but finitely many $n\in \Naturals$.

\change{In preparation for the following corollary, we recall formula \eqref{equ:OM_from_RN_to_Banach}:
\begin{equation*}
		I_{\mu_{\psi}} \colon Z \to \eReals,
		\qquad
		I_{\mu_{\psi}} (h)
		=
		\begin{cases}
			I_{\mu}(T_{\psi}h) & \text{if } h \in \range S_{\psi},
			\\
			+\infty & \text{otherwise.}
		\end{cases}
	\end{equation*}
}

\begin{corollary}[\ac{OM} functional, equicoercivity and $\mathsf{\Gamma}$-convergence for Besov-p measure, $1 \leq p \leq 2$]
	\label{thm:OM_functional_for_Besov_with_p_in_1_2}
	Using \Cref{notation:General_assumption_Besov}, the \ac{OM} functional $I_{\mu} \colon X\to\eReals$ of $\mu = B_{p}^{s} = B^{s,m,d}_{p}$ on $X = X_{p}^{t} = \ell^p_\delta$ is given by
	\begin{equation}
		\label{equ:OM_Besov_p_1_2}
		I_{\mu}(h)
		=
		\begin{cases}
			\norm{h-m}_{X^{s}_{p}}^{p} = \norm{h-m}_{\ell^{p}_{\gamma}}^p
			&
			\text{if } h-m\in X^{s}_{p} = \ell^p_\gamma,
			\\
			\infty
			&
			\text{otherwise.}
		\end{cases}
	\end{equation}
	Further, let $\mu^{(n)} \defeq B^{s^{(n)}}_p = B^{s^{(n)},m^{(n)},d}_{p}$, $n\in\Naturals$, be Besov measures such that $s^{(n)}\to s$, $\norm{m^{(n)} - m}_{X}\to 0$ as $n\to\infty$ and $\frac{1}{\tau}-\frac{1}{p} = \frac{s}{d}+\frac{1}{2}-\frac{1}{p}>0$.	
	Then there exists $n_{0}\in\Naturals$ such that, for each $n\geq n_{0}$, $\mu^{(n)}(X) = 1$ and we therefore consider these measures on the same space $X = X_{p}^{t}  = \ell_{\delta}^{p}$.	
	Then the sequence $(I_{\mu^{(n)}})_{n \geq n_{0}}$ of \ac{OM} functionals of $\mu^{(n)}$ given by $I_{\mu^{(n)}} = \norm{\quark - m^{(n)}}_{X^{s^{(n)}}_{p}}^{p} \colon X \to \eReals$ is equicoercive and $I_{\mu} = \Gammalim_{n \to \infty} I_{\mu^{(n)}}$.	
	Similarly, using \Cref{notation:From_RN_to_Banach} and assuming $S_{\psi}$ to be an isometry, $I_{\mu_{\psi}}$ and $I_{\mu_{\psi}^{(n)}},\ n\in\Naturals$, defined by \eqref{equ:OM_from_RN_to_Banach} constitute \ac{OM} functionals for $\mu_{\psi} = B^{s,m,d}_{p}(\psi)$ and $\mu_{\psi}^{(n)} = B^{s^{(n)},m^{(n)},d}_{p}(\psi)$, respectively, and $(I_{\mu_{\psi}^{(n)}})_{n \geq n_{0}}$ is equicoercive with $I_{\mu_{\psi}^{(n)}} \xrightarrow[n \to \infty]{\Gamma} I_{\mu_{\psi}^{(\infty)}}$.	
\end{corollary}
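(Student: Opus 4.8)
The plan is to derive the corollary by checking that Besov-$p$ measures with $1\le p\le 2$ satisfy the hypotheses of \Cref{thm:OM_for_product_measures,thm:Equicoercivity_for_product_measures,thm:Gamma_convergence_for_product_measures} and of their Banach-space companion \Cref{cor:OM_for_product_measures_Banach}, so that the statements follow by specialisation. For a single $\mu = B_p^s$, \Cref{def:Besov_space_and_measure_sequence} gives $\mu = \bigotimes_k \mu_k$ with each $\mu_k$ obtained from the symmetric reference density $\rho(u) = \bigl(2\Gamma(1+1/p)\bigr)^{-1}\exp(-|u|^p)$ by the affine substitution $u \mapsto \gamma_k^{-1}(u-m_k)$; hence \ref{item:basic_assumption_product_mu_0} and \ref{item:basic_assumption_product_scaled_marginals} hold, \ref{item:basic_assumption_product_full_measure} holds with $(X,\norm{\quark}_X)=(X_p^t,\norm{\quark}_{X_p^t})=(\ell_\delta^p,\norm{\quark}_{\ell_\delta^p})$ by \Cref{rem:Support_of_Besov} (with $m \in \ell_\delta^p$ by hypothesis and $\gamma \in \ell_\delta^p$ by \Cref{lemma:Connection_between_scaling_sequences}), and \ref{item:basic_assumption_product_Besov_p_1_2} holds by definition. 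Since $\logden(u) = -\log(\rho(u)/\rho(0)) = |u|^p$, we get $\logden_{\gamma,m}(h) = \sum_{k}\gamma_k^{-p}|h_k - m_k|^p = \norm{h-m}_{\ell_\gamma^p}^p = \norm{h-m}_{X_p^s}^p$, $\logden_{\gamma,m}(m)=0$, and $E_{\gamma,m} = m + \ell_\gamma^p$. As $p\le 2$ forces $\ell_\gamma^p\subseteq\ell_\gamma^2$, the extra condition $E_{\gamma,m}\subseteq m+\ell_\gamma^2$ in \Cref{thm:OM_for_product_measures} holds, and \ref{item:basic_assumption_product_Besov_p_1_2} supplies the smoothness needed for the lower bound there; therefore $I_\mu = \logden_{\gamma,m}$, which is \eqref{equ:OM_Besov_p_1_2}.

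For the sequence, note that the index $t = s - p^{-1}d(1+\eta)$ of $X_p^t = \ell_\delta^p$ satisfies $t < s - d/p$. By \Cref{rem:Support_of_Besov}, $\mu^{(n)} = B_p^{s^{(n)}}$ puts full mass on $\ell_\delta^p$ precisely when $t < s^{(n)} - d/p$, i.e.\ $s - s^{(n)} < d\eta/p$; the same inequality also gives $\gamma^{(n)}\in\ell_\delta^p$ (since $\gamma_k^{(n)}/\delta_k = k^{(s-s^{(n)})/d - (1+\eta)/p}$) and, combined with $\norm{m^{(n)}-m}_X\to 0$, $m^{(n)}\in\ell_\delta^p$. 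Because $s^{(n)}\to s$, all of these hold simultaneously for $n\ge n_0$ with $n_0$ large enough; and since equicoercivity and $\Gamma$-convergence are tail properties, we may drop the first $n_0-1$ terms and treat $(\mu^{(n)})_{n\ge n_0}$ and $\mu^{(\infty)}=\mu$ as measures on the common space $X = \ell_\delta^p$, each carrying the \ac{OM} functional $I_{\mu^{(n)}} = \logden^{(n)}_{\gamma^{(n)},m^{(n)}} = \norm{\quark - m^{(n)}}_{X_p^{s^{(n)}}}^p$ by the first paragraph.

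It remains to apply \Cref{thm:Equicoercivity_for_product_measures,thm:Gamma_convergence_for_product_measures}. Only $s$ (hence $\gamma$) depends on $n$, so every reference density equals $\rho(u) = \bigl(2\Gamma(1+1/p)\bigr)^{-1}\exp(-|u|^p)$ and $\logden^{(n)}\equiv\logden^{(\infty)} = |\quark|^p$; thus the pointwise-convergence hypothesis of \Cref{thm:Equicoercivity_for_product_measures} and the local-uniform-convergence and $\logden^{(n)}\le\logden^{(\infty)}$ hypotheses of \Cref{thm:Gamma_convergence_for_product_measures} are automatic. The one genuine check is $\norm{\gamma^{(n)}-\gamma}_X\to 0$: from $(\gamma_k^{(n)}-\gamma_k)/\delta_k = k^{-(1+\eta)/p}\bigl(k^{(s-s^{(n)})/d}-1\bigr)$ and the elementary bound $|k^{\varepsilon}-1|\le|\varepsilon|(\ln k)\,k^{|\varepsilon|}$, one gets $\norm{\gamma^{(n)}-\gamma}_X^p\le C\,|s-s^{(n)}|^p$ once $|s-s^{(n)}|$ is small enough that $\sum_k k^{-1-\eta/2}(\ln k)^p$ dominates the tail, and this tends to $0$. \Cref{thm:Equicoercivity_for_product_measures} then yields equicoercivity of $(I_{\mu^{(n)}})_{n\ge n_0}$ and \Cref{thm:Gamma_convergence_for_product_measures} yields $I_\mu = \Gammalim_{n\to\infty} I_{\mu^{(n)}}$. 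For the Banach-space claims, \Cref{cor:Q_space_Bps} shows that $S_\psi = \overline{S}_\psi|_{\ell_\delta^p}$ is an isometry onto $Z = X_p^t(\psi)$ with $\mu_\psi = B_p^{s}(\psi)$ and $\mu_\psi^{(n)} = B_p^{s^{(n)}}(\psi)$, so \Cref{cor:OM_for_product_measures_Banach} and the Banach parts of \Cref{thm:Equicoercivity_for_product_measures,thm:Gamma_convergence_for_product_measures} carry everything over to $Z$. I expect the main obstacle to be the bookkeeping that produces a single threshold $n_0$ making full support, $\gamma^{(n)}\in X$, and both norm convergences take effect together, along with the elementary but slightly fussy estimate for $\norm{\gamma^{(n)}-\gamma}_X$.
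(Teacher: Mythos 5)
Your proposal is correct and follows essentially the same route as the paper: verify \Cref{assump:basic_assumptions_for_product_measures} \ref{item:basic_assumption_product_full_measure}--\ref{item:basic_assumption_product_scaled_marginals} and \ref{item:basic_assumption_product_Besov_p_1_2} with $\logden(u)=|u|^p$ and $E_{\gamma,m}=m+\ell_\gamma^p\subseteq m+\ell_\gamma^2$, apply \Cref{thm:OM_for_product_measures} and \Cref{lemma:From_RN_to_Banach}, pick $n_0$ so that $|s^{(n)}-s|$ is small enough for common full support on $\ell_\delta^p$, verify $\norm{\gamma^{(n)}-\gamma}_X\to 0$, and invoke \Cref{thm:Equicoercivity_for_product_measures,thm:Gamma_convergence_for_product_measures}. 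The only (immaterial) difference is that you bound $\norm{\gamma^{(n)}-\gamma}_X$ by the explicit estimate $|k^{\varepsilon}-1|\leq|\varepsilon|(\ln k)k^{|\varepsilon|}$, whereas the paper dominates the summands by $2k^{-1-\eta/2}$ and applies the reverse Fatou lemma.
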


\begin{proof}
	\change{\Cref{assump:basic_assumptions_for_product_measures} \ref{item:basic_assumption_product_full_measure}--\ref{item:basic_assumption_product_scaled_marginals} and 	\ref{item:basic_assumption_product_Besov_p_1_2} --- i.e.\ the support condition on $\mu$, continuity and symmetry of the reference density $\rho$, affine transformation property and Besov property --- }are satisfied by \Cref{def:Besov_space_and_measure_sequence,rem:Support_of_Besov} with
	\[
	\logden(u) = \absval{u}^{p},
	\qquad
	\logden_{\gamma,m}(h) = \norm{h-m}_{\ell^{p}_{\gamma}}^p,
	\qquad
	E_{\gamma,m} = m+\ell^{p}_{\gamma} \subseteq m+\ell_{\gamma}^{2},
	\]
	hence \eqref{equ:OM_Besov_p_1_2} follows directly from \Cref{thm:OM_for_product_measures}.
	In other words, the result in \Cref{conjecture:OM_for_product_measures} holds for the Besov measures $\mu$ and $\mu^{(n)}$:
	\change{\begin{align*}
		\lim_{r \searrow 0} \frac{\mu( \cBall{h}{r} )}{\mu( \cBall{m}{r})}
		&=
		\begin{cases}
			\exp\left( - \logden_{\gamma,m}(h)  \right) & \text{if } h \in E_{\gamma,m},
			\\
			0 & \text{if } h \notin E_{\gamma,m},
		\end{cases}
    \end{align*}
    and a similar result holds with $\mu$ replaced by $\mu^{(n)}$.}
	The analogous statement for  $I_{\mu_{\psi}}$ and $I_{\mu_{\psi}^{(n)}},\ n\in\Naturals$, follows from \Cref{lemma:From_RN_to_Banach}. \change{Recall that this lemma transforms an \ac{OM} functional on the sequence space $X$ into an \ac{OM} functional on the separable Banach space $Z$, where $X$ and $Z$ are related by the synthesis operator $S_{\psi}:X\to Z$ and coordinate operator $T_{\Psi}:Z\to \Reals^{\Naturals}$.}

	Since $s^{(n)} \to s$, there exists $n_{0} \in \Naturals$ such that, for $n\geq n_{0}$, $\absval{s^{(n)}-s} \leq \frac{d\eta}{2p}$.	
	Therefore, for  $n\geq n_{0}$, $t = s - p^{-1} d (1+\eta) < s^{(n)} - p^{-1}d$ and $\mu^{(n)}(X) = 1$ for $X = X_{p}^{t}  = \ell_{\delta}^{p}$ by \Cref{rem:Support_of_Besov}.	
	Further, for $n\geq n_{0}$, the sequences $a^{(n)} = (k^{-1-\eta} \, \absval{k^{\frac{p}{d}(s-s^{(n)})} - 1})_{k\in \Naturals}$ are (uniformly) bounded by the summable sequence $a = (2k^{-1-\eta/2})_{k\in\Naturals}$ and the reverse Fatou lemma implies
	\[
	\limsup_{n \to \infty}
	\norm{\gamma^{(n)} - \gamma}_{\ell_{\delta}^{p}}^{p}
	=
	\limsup_{n \to \infty}
	\sum_{k \in \Naturals} k^{-1-\eta} \, \Absval{k^{\frac{p}{d}(s-s^{(n)})} - 1}
	\leq
	\sum_{k \in \Naturals} \limsup_{n \to \infty} k^{-1-\eta} \, \Absval{k^{\frac{p}{d}(s-s^{(n)})} - 1}
	=
	0,
	\]
	proving $\norm{\gamma^{(n)} - \gamma}_{X}\to 0$.
	Equicoercivity and $\Gamma$-convergence of the sequences $(I_{\mu^{(n)}})_{n\in\Naturals}$ and $(I_{\mu_{\psi}^{(n)}})_{n\in\Naturals}$ \change{directly} follow from \Cref{thm:Equicoercivity_for_product_measures,thm:Gamma_convergence_for_product_measures} \change{respectively}.
\end{proof}

\subsection{Application to Cauchy measures}
\label{sec:Gamma_Cauchy}

This section considers infinite-dimensional Cauchy measures in the sense of infinite products of one-dimensional Cauchy distributions, as used by e.g.\ \citet{Sullivan2017} and \citet{LieSullivan2018Cauchy}.
We note that there is another class of ``Cauchy measures'' in the literature, namely the class of stochastic processes with Cauchy-distributed increments, as used by e.g.\ \citet{MarkkanenRoininenHuttunenLasanen2019} and \citet{ChadaRoininenSuuronen2021}.

\begin{definition}
	\label{def:Cauchy_distribution}
	We define the \defterm{Cauchy measure} $\Cauchy(m,\gamma) \defeq \bigotimes_{k \in \Naturals} \Cauchy(m_{k}, \gamma_{k})$ on $\Reals^{\Naturals}$ with shift parameter $m \in \Reals^{\Naturals}$ and scale parameter $\gamma \in \Reals_{>0}^{\Naturals}$ as the product measure of one-dimensional Cauchy measures on $\Reals$ with shift parameter $m_{k}$ and scale parameter $\gamma_{k}$, $k\in\Naturals$, i.e.\ with probability densities
	\[
		\frac{\rd \Cauchy(m_{k}, \gamma_{k})}{\rd u} (u)
		\defeq
		\biggl(
		\pi\gamma_{k} \biggl(1 + \Absval{\frac{u - m_{k}}{\gamma_{k}}}^{2} \biggr)
		\biggr)^{-1}=\frac{1}{\pi\gamma_{k}}\frac{\gamma_{k}^{2}}{\gamma_{k}^{2}+\absval{u-m_k}^{2}}.
	\]
\end{definition}

\begin{assumption}
	\label{assump:Cauchy_technical}
	$X = \ell^{q}$ for some $q \geq 1$, $m \in \ell^{q}$, $\gamma \in \ell^{1}(\Naturals) \cap \Reals_{>0}^{\Naturals}$.
	In addition, if $q=1$, then $\gamma$ satisfies $\sum_{k\in\Naturals}\absval{\gamma_{k}\log\absval{\gamma_{k}}}<\infty$.
\end{assumption}

\change{Recall \Cref{notation:From_RN_to_Banach}: $X = \ell_{\alpha}^{p}$ for some $1 \leq p < \infty$ and $\alpha \in \Reals_{> 0}^{\Naturals}$, $Z$ is a separable Banach space with Schauder basis $\psi = (\psi_{k})_{k\in\Naturals}$, the synthesis operator $	S_{\psi}  \colon X \to Z$ satisfies $x = (x_{k})_{k \in \Naturals} \mapsto \sum_{k \in \Naturals} x_{k}\psi_{k}$, and the coordinate operator $T_{\psi}  \colon Z \to \Reals^{\Naturals}$ satisfies $z = \sum_{k \in \Naturals} v_{k} \psi_{k} \mapsto (v_{k})_{k\in\Naturals}$.	If $\mu \in \prob{X}$, then $\mu_{\psi} \defeq (S_{\psi})_{\#} \mu$ is the push-forward of $\mu$ under $S_{\psi}$.}

\begin{definition}[{\citealt[Definition 3.2, Assumption 3.3]{Sullivan2017}}]
	\label{def:Cauchy_distributed_RV}
	Under \Cref{assump:Cauchy_technical} and using \Cref{notation:From_RN_to_Banach} with $\alpha \equiv 1$, we call $\rv{u} \defeq S_{\psi}(\rv{v}) = \sum_{k} \rv{v}_{k} \psi_{k}$, where $\rv{v} \sim \Cauchy(m,\gamma)$, a \defterm{Cauchy-distributed random variable in $Z$} and write $\rv{u} \sim \Cauchy^{q,\psi}(m,\gamma)$.	
	In other words, $\Cauchy^{q,\psi}(m,\gamma) = \mu_{\psi}$ for $\mu = \Cauchy(m,\gamma)$.
\end{definition}

The following theorem guarantees the well-definedness of the random variable $\rv{u}$ above:

\begin{theorem}[{\citealt[Theorem 3.4]{Sullivan2017}}]
	\label{thm:Cauchy_distributed_RV}
	Under \Cref{assump:Cauchy_technical}, the Cauchy measure $\mu = \Cauchy(m,\gamma)$ on $\Reals^{\Naturals}$ from \Cref{def:Cauchy_distribution} satisfies $\mu(X) = 1$.
	Similarly, under the assumptions of \Cref{def:Cauchy_distributed_RV}, $\rv{u} \in Z$ a.s.
\end{theorem}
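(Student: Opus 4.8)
The plan is to reduce the statement $\mu(X)=1$ to an almost-sure summability statement for the coordinate random variables, and then to deduce $\rv u\in Z$ from the continuity of the synthesis operator. Since $\Borel{X}$ for $X=\ell^{q}$ is contained in $\Borel{\Reals^{\Naturals}}$ (cf.\ \Cref{lemma:Sigma_algebras_are_fine}), the claim is that $\rv v\in\ell^{q}$ almost surely, where $\rv v=(\rv v_{k})_{k\in\Naturals}\sim\Cauchy(m,\gamma)$. Writing $\rv v_{k}=m_{k}+\gamma_{k}\rv w_{k}$ with $\rv w_{k}\sim\Cauchy(0,1)$, the inequality $|a+b|^{q}\le 2^{q-1}(|a|^{q}+|b|^{q})$ together with $m\in\ell^{q}$ (\Cref{assump:Cauchy_technical}) shows that the event $\{\rv v\in\ell^{q}\}$ coincides with $\{\sum_{k\in\Naturals}\gamma_{k}^{q}|\rv w_{k}|^{q}<\infty\}$. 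So it suffices to prove $\sum_{k}\gamma_{k}^{q}|\rv w_{k}|^{q}<\infty$ almost surely.

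First I would record the elementary tail bound $\mathbb{P}(|\rv w_{k}|>s)=\tfrac{2}{\pi}\arctan(1/s)\le\tfrac{2}{\pi s}$ for $s>0$, which follows from $\arctan x\le x$. Since $\gamma\in\ell^{1}$, this gives $\sum_{k}\mathbb{P}(\gamma_{k}|\rv w_{k}|>1)\le\tfrac{2}{\pi}\sum_{k}\gamma_{k}<\infty$, so by the Borel--Cantelli lemma there is almost surely a (random) index $K$ with $\gamma_{k}|\rv w_{k}|\le 1$ for all $k\ge K$; in particular $\gamma_{k}^{q}|\rv w_{k}|^{q}=\min(\gamma_{k}^{q}|\rv w_{k}|^{q},1)$ for $k\ge K$. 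Hence $\sum_{k}\gamma_{k}^{q}|\rv w_{k}|^{q}<\infty$ almost surely as soon as $\sum_{k}\min(\gamma_{k}^{q}|\rv w_{k}|^{q},1)<\infty$ almost surely, and by Tonelli it is enough to check $\sum_{k}\mathbb{E}[\min(\gamma_{k}^{q}|\rv w_{k}|^{q},1)]<\infty$.

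The remaining, and only mildly delicate, step is to bound this expectation. By the layer-cake formula and the substitution $t=u^{q}$,
\begin{equation*}
	\mathbb{E}[\min(\gamma_{k}^{q}|\rv w_{k}|^{q},1)]
	=q\int_{0}^{1}\mathbb{P}(|\rv w_{k}|>u/\gamma_{k})\,u^{q-1}\,\rd u
	\le q\int_{0}^{1}\min\Bigl(1,\tfrac{2\gamma_{k}}{\pi u}\Bigr)u^{q-1}\,\rd u .
\end{equation*}
For $k$ large enough that $\tfrac{2\gamma_{k}}{\pi}\le 1$ (true eventually, since $\gamma_{k}\to 0$), splitting the integral at $u_{0}\defeq\tfrac{2\gamma_{k}}{\pi}$ gives, for $q>1$, a bound of the shape $u_{0}^{q}+\tfrac{2q}{\pi(q-1)}\gamma_{k}\le C_{q}\gamma_{k}$, and for $q=1$ a bound of the shape $\tfrac{2\gamma_{k}}{\pi}\bigl(1+\log\tfrac{\pi}{2\gamma_{k}}\bigr)\le C(\gamma_{k}+\gamma_{k}|\log\gamma_{k}|)$. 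Summing over $k$ and using $\gamma\in\ell^{1}$ when $q>1$, respectively the extra hypothesis $\sum_{k}|\gamma_{k}\log|\gamma_{k}||<\infty$ when $q=1$, yields $\sum_{k}\mathbb{E}[\min(\gamma_{k}^{q}|\rv w_{k}|^{q},1)]<\infty$, completing the proof that $\mu(\ell^{q})=1$. For the final assertion, under \Cref{def:Cauchy_distributed_RV} we have $\rv u=S_{\psi}(\rv v)=\sum_{k}\rv v_{k}\psi_{k}$ with $\rv v\sim\Cauchy(m,\gamma)$; since $\rv v\in\ell^{q}=X$ almost surely by the above and $S_{\psi}\colon X\to Z$ is a well-defined continuous embedding (\Cref{notation:From_RN_to_Banach}), the series defining $\rv u$ converges in $Z$ almost surely, i.e.\ $\rv u\in Z$ a.s.

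I expect the borderline case $q=1$ to be the main obstacle: there $\mathbb{E}|\rv w_{k}|=\infty$, so no moment bound is available, and the logarithmic term appearing in the tail integral is exactly what forces the hypothesis $\sum_{k}|\gamma_{k}\log|\gamma_{k}||<\infty$; pinning down the split point and the constants in that integral is the only computation requiring genuine care. For $q>1$ the factor $u^{q-1}$ removes the singularity at $u=0$ and the estimate is routine.
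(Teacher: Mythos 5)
Your proof is correct. Note that the paper itself offers no proof of this statement: it is imported verbatim from \citet[Theorem~3.4]{Sullivan2017}, so there is no internal argument to compare against. Your derivation is a valid self-contained one, and it follows what is essentially the standard route (and, as far as I recall, the route taken in the cited source): reduce to the almost-sure finiteness of $\sum_{k} \gamma_{k}^{q} \absval{\rv{w}_{k}}^{q}$ for i.i.d.\ standard Cauchy $\rv{w}_{k}$, and then apply the sufficiency half of Kolmogorov's three-series criterion for non-negative summands, which you implement by hand via Borel--Cantelli for the tail events $\{\gamma_{k}\absval{\rv{w}_{k}} > 1\}$ plus summability of the truncated first moments $\bE[\min(\gamma_{k}^{q}\absval{\rv{w}_{k}}^{q},1)]$. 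The individual steps all check out: the equivalence of the events $\{\rv{v}\in\ell^{q}\}$ and $\{\sum_{k}\gamma_{k}^{q}\absval{\rv{w}_{k}}^{q}<\infty\}$ via $m\in\ell^{q}$; the tail bound $\bP(\absval{\rv{w}_{k}}>s)=\tfrac{2}{\pi}\arctan(1/s)\leq\tfrac{2}{\pi s}$; the layer-cake computation with split point $u_{0}=2\gamma_{k}/\pi$; and, in the borderline case $q=1$, the appearance of the term $\gamma_{k}\absval{\log\gamma_{k}}$, which is precisely what the extra hypothesis in \Cref{assump:Cauchy_technical} is designed to control. The final assertion about $\rv{u}\in Z$ is immediate from the well-definedness of $S_{\psi}$ assumed in \Cref{notation:From_RN_to_Banach}, as you say.
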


\begin{lemma}
\label{lemma:Cauchy_fulfills_product_measure_assumptions}
The Cauchy measure $\mu = \Cauchy(m,\gamma)$ on $\Reals^{\Naturals}$ satisfies \Cref{assump:basic_assumptions_for_product_measures} \ref{item:basic_assumption_product_mu_0}--\ref{item:basic_assumption_product_integrable_second_derivative}.
Further, under \Cref{assump:Cauchy_technical}, \ref{item:basic_assumption_product_full_measure} is fulfilled for $X = \ell^{q}$.
\end{lemma}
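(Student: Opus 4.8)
The plan is to read off every item directly from the explicit density of the one-dimensional Cauchy distribution. Set $\rho(u) \defeq \frac{1}{\pi(1+u^{2})}$, the density of $\Cauchy(0,1)$, and take $\mu_{0} \defeq \Cauchy(0,1)$ as the reference measure. By \Cref{def:Cauchy_distribution} the $k$-th marginal $\Cauchy(m_{k},\gamma_{k})$ has density $u \mapsto \gamma_{k}^{-1}\rho\bigl(\gamma_{k}^{-1}(u-m_{k})\bigr)$, i.e.\ it is the push-forward of $\mu_{0}$ under the affine map $u \mapsto \gamma_{k}u + m_{k}$; this is exactly \ref{item:basic_assumption_product_scaled_marginals}, with the given scale sequence $\gamma \in \Reals_{>0}^{\Naturals}$ and shift $m$. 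For \ref{item:basic_assumption_product_mu_0}: $\rho$ is continuous and strictly positive on all of $\Reals$, it is symmetric because it depends on $u$ only through $u^{2}$, and $\rho|_{\Reals_{\geq 0}}$ is strictly decreasing because $u \mapsto 1+u^{2}$ is strictly increasing there.

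The remaining two items are short calculus checks with rational integrands. For \ref{item:basic_assumption_product_Shepp_condition}, $\rho$ is smooth, hence locally absolutely continuous, and Lebesgue-a.e.\ (in fact everywhere) positive; differentiating gives $\rho'(u) = -\frac{2u}{\pi(1+u^{2})^{2}}$, so $\frac{(\rho'(u))^{2}}{\rho(u)} = \frac{4u^{2}}{\pi(1+u^{2})^{3}}$, which is continuous and decays like $\absval{u}^{-4}$ as $\absval{u}\to\infty$; hence its integral over $\Reals$ is finite (this is the Fisher information of the Cauchy distribution). For \ref{item:basic_assumption_product_integrable_second_derivative}, $\rho \in C^{\infty}(\Reals) \subseteq C^{2}(\Reals)$ and $\rho''(u) = \frac{2(3u^{2}-1)}{\pi(1+u^{2})^{3}}$ is bounded and again decays like $\absval{u}^{-4}$, so $\rho'' \in L^{1}(\Reals)$.

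Finally, \ref{item:basic_assumption_product_full_measure} under \Cref{assump:Cauchy_technical} is precisely the content of \Cref{thm:Cauchy_distributed_RV}, which asserts $\mu(\ell^{q})=1$. In the notation of \ref{item:basic_assumption_product_full_measure} this is the case $p = q$ and $\alpha \equiv 1$ (so that $\ell_{\alpha}^{p} = \ell^{q}$), and one then regards $\mu$ as a Borel measure on the separable Banach space $X = \ell^{q}$. I do not expect any genuine obstacle in this lemma; the only real ``work'' is the elementary observation that the integrands in \ref{item:basic_assumption_product_Shepp_condition} and \ref{item:basic_assumption_product_integrable_second_derivative} are rational functions whose denominator degree exceeds the numerator degree by at least four, so that integrability over $\Reals$ is automatic.
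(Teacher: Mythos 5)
Your proposal is correct and follows the same route as the paper: \ref{item:basic_assumption_product_mu_0} and \ref{item:basic_assumption_product_scaled_marginals} are read off from \Cref{def:Cauchy_distribution}, \ref{item:basic_assumption_product_Shepp_condition} and \ref{item:basic_assumption_product_integrable_second_derivative} are verified by the elementary computations with $\rho'$ and $\rho''$ (which the paper merely labels ``straightforward''), and \ref{item:basic_assumption_product_full_measure} is deduced from \Cref{thm:Cauchy_distributed_RV}. Your explicit formulas for $\rho'$, $(\rho')^{2}/\rho$ and $\rho''$ and the decay estimates are accurate, so no gaps remain.
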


\begin{proof}
\change{The support condition \ref{item:basic_assumption_product_full_measure} follows from \Cref{thm:Cauchy_distributed_RV}; the continuity and symmetry of the reference density $\rho$ \ref{item:basic_assumption_product_mu_0} and the affine transformation property of the $(\mu_{k})_{k\in\Naturals}$  \ref{item:basic_assumption_product_scaled_marginals} follow from \Cref{def:Cauchy_distribution}; the finite Fisher information \ref{item:basic_assumption_product_Shepp_condition} and smoothness assumptions on the reference density $\rho$ \ref{item:basic_assumption_product_integrable_second_derivative}} can be verified by straightforward computations.
\end{proof}

The following theorem characterises the shift-quasi-invariance space $Q(\mu)$ of the Cauchy measure $\mu = \Cauchy(m,\gamma)$ as well as the corresponding shift density $r_{h}^{\mu}$:

\begin{corollary}[Shift-quasi-invariance space and shift density of a Cauchy measure]
	\label{cor:Cauchy_shift_quasi_invariance_space}
	If $\mu = \Cauchy(m,\gamma)$ is the Cauchy measure on $\Reals^{\Naturals}$, then $Q(\mu) = \ell_{\gamma}^{2}$.
	In particular, if $\gamma \in \ell^{1}$, then $Q(\mu) \subseteq \ell^{2/3} \subseteq \ell^{1}$.
	In addition, for any $h \in Q(\mu)$ and $x\in\Reals^\Naturals$,
	\begin{equation}
		\label{eq:Radon_nikodym_derivative_of_mu_h_wrt_mu_Cauchy_series}
		r_{h}^{\mu}(x)
		=
		\lim_{N\to\infty}\prod_{n=1}^{N}\frac{(x_{k}-m_{k})^2+\gamma_{k}^2}{(x_{k}-m_{k}-h_{k})^2+\gamma_{k}^2}.
	\end{equation}
	Further, under \Cref{assump:Cauchy_technical} and using \Cref{notation:From_RN_to_Banach} with $\alpha \equiv 1$, we have $\mu_{\psi} = \Cauchy^{q,\psi}(m,\gamma)$.
	Then $Q(\mu_{\psi})= S_{\psi}(\ell_{\gamma}^{2})$ and, for any $h \in Q(\mu_{\psi})$ and $z\in Z$, $r_{h}^{\mu_{\psi}}(z) = r_{T_{\psi}(h)}^{\mu}(T_{\psi}(z))$.
\end{corollary}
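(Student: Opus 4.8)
The plan is to obtain every assertion as a direct specialisation of \Cref{thm:shift_quasi_invariance_space_for_product_measures}, once the Cauchy measure has been placed inside its hypotheses. First I would invoke \Cref{lemma:Cauchy_fulfills_product_measure_assumptions}, which says that $\mu = \Cauchy(m,\gamma)$ satisfies \Cref{assump:basic_assumptions_for_product_measures} \ref{item:basic_assumption_product_mu_0}--\ref{item:basic_assumption_product_integrable_second_derivative}; in particular it satisfies \ref{item:basic_assumption_product_mu_0}--\ref{item:basic_assumption_product_Shepp_condition}, which are precisely the hypotheses of \Cref{thm:shift_quasi_invariance_space_for_product_measures}. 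Applying that theorem with reference density $\rho(u) = \pi^{-1}(1+u^{2})^{-1}$ yields $Q(\mu) = \ell_{\gamma}^{2}$ at once, and the product formula \eqref{eq:r_for_product_measures} turns into \eqref{eq:Radon_nikodym_derivative_of_mu_h_wrt_mu_Cauchy_series} after the elementary identity $\rho\bigl(\gamma_{k}^{-1}(x_{k} - m_{k} - h_{k})\bigr)/\rho\bigl(\gamma_{k}^{-1}(x_{k} - m_{k})\bigr) = \bigl(\gamma_{k}^{2} + (x_{k} - m_{k})^{2}\bigr)\big/\bigl(\gamma_{k}^{2} + (x_{k} - m_{k} - h_{k})^{2}\bigr)$, the limit over $N$ being nothing but the partial products in \eqref{eq:r_for_product_measures}, whose convergence is already guaranteed by that theorem.

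Next I would establish the chain $Q(\mu) = \ell_{\gamma}^{2} \subseteq \ell^{2/3} \subseteq \ell^{1}$ under the extra hypothesis $\gamma \in \ell^{1}$. The second inclusion is just monotonicity of $\ell^{p}$-norms in $p$. For the first, let $h \in \ell_{\gamma}^{2}$ and write $\absval{h_{k}}^{2/3} = \bigl(h_{k}^{2}/\gamma_{k}^{2}\bigr)^{1/3}\,\gamma_{k}^{2/3}$; H\"older's inequality with conjugate exponents $3$ and $3/2$ then gives $\sum_{k} \absval{h_{k}}^{2/3} \leq \bigl(\sum_{k} h_{k}^{2}/\gamma_{k}^{2}\bigr)^{1/3}\bigl(\sum_{k}\gamma_{k}\bigr)^{2/3} < \infty$. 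This is essentially the only genuine computation in the corollary.

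For the Hilbert/Banach-space part I would combine \Cref{assump:Cauchy_technical} with \Cref{thm:Cauchy_distributed_RV} and \Cref{lemma:Cauchy_fulfills_product_measure_assumptions}, which together supply \Cref{assump:basic_assumptions_for_product_measures} \ref{item:basic_assumption_product_full_measure} with $X = \ell^{q}$, i.e.\ $\alpha \equiv 1$ and $p = q$ in \Cref{notation:From_RN_to_Banach}. The final assertion of \Cref{thm:shift_quasi_invariance_space_for_product_measures} then applies verbatim and gives $Q(\mu_{\psi}) = S_{\psi}(\ell_{\gamma}^{2})$ and $r_{h}^{\mu_{\psi}}(z) = r_{T_{\psi}(h)}^{\mu}(T_{\psi}(z))$ for $h \in Q(\mu_{\psi})$ and $z \in Z$, while the identification $\mu_{\psi} = \Cauchy^{q,\psi}(m,\gamma)$ is exactly \Cref{def:Cauchy_distributed_RV}. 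I do not expect a serious obstacle anywhere: all the analytic work has been front-loaded into \Cref{thm:shift_quasi_invariance_space_for_product_measures} and \Cref{lemma:Cauchy_fulfills_product_measure_assumptions}, and the only point requiring a moment's care is confirming that the mode of convergence in \eqref{eq:Radon_nikodym_derivative_of_mu_h_wrt_mu_Cauchy_series} — a limit of partial products, valid for every $x \in \Reals^{\Naturals}$ — coincides with what \Cref{thm:shift_quasi_invariance_space_for_product_measures} actually delivers, which it does.
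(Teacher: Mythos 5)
Your proposal is correct and follows essentially the same route as the paper: verify \Cref{assump:basic_assumptions_for_product_measures} \ref{item:basic_assumption_product_mu_0}--\ref{item:basic_assumption_product_Shepp_condition} (and \ref{item:basic_assumption_product_full_measure} under \Cref{assump:Cauchy_technical}) via \Cref{lemma:Cauchy_fulfills_product_measure_assumptions} and \Cref{thm:Cauchy_distributed_RV}, then specialise \Cref{thm:shift_quasi_invariance_space_for_product_measures} to the Cauchy density to obtain $Q(\mu)$, $r_{h}^{\mu}$, $Q(\mu_{\psi})$ and $r_{h}^{\mu_{\psi}}$. The only cosmetic difference is that your H\"older computation for $\ell_{\gamma}^{2} \subseteq \ell^{2/3}$ is exactly the argument the paper outsources to \Cref{prop:embedding_of_ell_p_alpha_spaces} (with $p = 2/3$, $q = 2$, $\alpha \equiv 1$, where $\tfrac{qp}{q-p} = 1$ recovers the hypothesis $\gamma \in \ell^{1}$).
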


\begin{proof}
	\Cref{assump:basic_assumptions_for_product_measures} \ref{item:basic_assumption_product_full_measure}--\ref{item:basic_assumption_product_Shepp_condition} are satisfied by \Cref{lemma:Cauchy_fulfills_product_measure_assumptions}. \change{\Cref{thm:shift_quasi_invariance_space_for_product_measures} yields the formula \eqref{eq:Radon_nikodym_derivative_of_mu_h_wrt_mu_Besov} for $r_{h}^{\mu}$, the spaces $Q(\mu)$, $Q(\mu_{\psi})$, and the equation for $r_{h}^{\mu_{\psi}}$. 
	\Cref{prop:embedding_of_ell_p_alpha_spaces} yields the containment relation $Q(\mu)\subseteq \ell^{2/3}$. }
\end{proof}

\begin{corollary}[\ac{OM} functional, equicoercivity and $\mathsf{\Gamma}$-convergence for Cauchy measure]
	\label{cor:OM_Cauchy}
	Under \Cref{assump:Cauchy_technical}, an \ac{OM} functional $I_{\mu} \colon X\to\eReals$ of $\mu=\Cauchy(m,\gamma)$ is given by
	\begin{equation*}
		I_{\mu}(h)
		=
		\begin{cases}
			\sum_{k\in\Naturals}\log \bigl( 1 + \gamma_{k}^{-2} (h_{k} - m_{k})^{2} \bigr)
			&
			\text{if } h\in m + \ell_{\gamma}^{2},
			\\
			\infty
			&
			\text{otherwise.}
		\end{cases}
	\end{equation*}
	Further, for $n\in\Naturals$, let $\mu^{(n)} = C(m^{(n)},\gamma^{(n)})$ be Cauchy measures
	such that $m^{(n)}$ and $\gamma^{(n)}$ satisfy \Cref{assump:Cauchy_technical} for the same $q\geq 1$ as above and
	$\norm{m^{(n)} - m}_{X}\to 0$ and $\norm{\gamma^{(n)} - \gamma}_{X}\to 0$ as $n\to\infty$.
	Then the sequence $(I_{\mu^{(n)}})_{n\in\Naturals}$ is equicoercive and $I_{\mu} = \Gammalim_{n \to \infty} I_{\mu^{(n)}}$.
	Similarly, using \Cref{notation:From_RN_to_Banach} with $\alpha \equiv 1$ and assuming $S_{\psi}$ to be an isometry, $I_{\mu_{\psi}}$ and $I_{\mu_{\psi}^{(n)}},\ n\in\Naturals$, defined by \eqref{equ:OM_from_RN_to_Banach} constitute \ac{OM} functionals for $\mu_{\psi} = \Cauchy^{q,\psi}(m,\gamma)$ and $\mu_{\psi}^{(n)} = \Cauchy^{q,\psi}(m^{(n)},\gamma^{(n)})$, respectively, and $(I_{\mu_{\psi}^{(n)}})_{n\in\Naturals}$ is equicoercive with $I_{\mu_{\psi}^{(n)}} \xrightarrow[n \to \infty]{\Gamma} I_{\mu_{\psi}^{(\infty)}}$.
\end{corollary}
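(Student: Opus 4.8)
The plan is to present $\Cauchy(m,\gamma)$ as a member of the product-measure class of \Cref{assump:basic_assumptions_for_product_measures}, make the associated negative log-density explicit, and then quote \Cref{thm:OM_for_product_measures,thm:Equicoercivity_for_product_measures,thm:Gamma_convergence_for_product_measures} together with \Cref{lemma:From_RN_to_Banach} to obtain all four assertions (the formula for $I_{\mu}$, equicoercivity, $\Gamma$-convergence, and the Banach-space versions). First I would write $\mu = \Cauchy(m,\gamma)$ in the form demanded by \Cref{assump:basic_assumptions_for_product_measures} \ref{item:basic_assumption_product_scaled_marginals}, with reference measure $\mu_{0} = \Cauchy(0,1)$ of density $\rho(u) = (\pi(1+u^{2}))^{-1}$, scale sequence $\gamma$, and shift $m$. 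By \Cref{lemma:Cauchy_fulfills_product_measure_assumptions}, \Cref{assump:basic_assumptions_for_product_measures} \ref{item:basic_assumption_product_full_measure}--\ref{item:basic_assumption_product_integrable_second_derivative} then hold, with $X = \ell^{q}$ (so $p = q$, $\alpha \equiv 1$) under \Cref{assump:Cauchy_technical}. Since $\rho(0) = 1/\pi$, formula \eqref{eq:negative_log-density} yields $\logden(u) = \log(1 + u^{2})$, and hence $\logden_{\gamma,m}(h) = \sum_{k\in\Naturals}\log(1 + \gamma_{k}^{-2}(h_{k}-m_{k})^{2})$, which is precisely the claimed functional.

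The only step that is not purely formal — and the place I expect to spend the most care — is to check that $E_{\gamma,m} = \Set{h \in X}{\logden_{\gamma,m}(h) < \infty}$ coincides with $m + \ell_{\gamma}^{2}$. This is what makes the extra hypothesis $E_{\gamma,m} \subseteq m + \ell_{\gamma}^{2}$ of \Cref{thm:OM_for_product_measures} available here, in contrast to Besov-$p$ with $p > 2$ where it fails. I would argue it via the elementary fact that convergence of $\sum_{k}\log(1 + t_{k}^{2})$ forces $t_{k} \to 0$, so that for large $k$ one has $\tfrac12 t_{k}^{2} \leq \log(1 + t_{k}^{2}) \leq t_{k}^{2}$, which gives the two-sided equivalence $\sum_{k}\log(1 + t_{k}^{2}) < \infty \iff \sum_{k} t_{k}^{2} < \infty$, applied with $t_{k} = \gamma_{k}^{-1}(h_{k} - m_{k})$. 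With this identity in hand, \Cref{thm:OM_for_product_measures} (invoked with \ref{item:basic_assumption_product_integrable_second_derivative}) gives at once that $I_{\mu} = \logden_{\gamma,m}$ is an \ac{OM} functional for $\mu$ with $I_{\mu}(m) = 0$; the corresponding statement for $\mu_{\psi} = \Cauchy^{q,\psi}(m,\gamma)$ follows from \Cref{lemma:From_RN_to_Banach}, equivalently from \Cref{cor:OM_for_product_measures_Banach}, using that $S_{\psi}$ is an isometry.

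For the convergence assertions I would feed the family $(\mu^{(n)})_{n\in\Naturals\cup\{\infty\}}$, with $\mu^{(\infty)} = \mu$, directly into \Cref{thm:Equicoercivity_for_product_measures,thm:Gamma_convergence_for_product_measures}. The decisive simplification is that every Cauchy measure in the family shares the \emph{same} reference density $\rho^{(n)} = \rho = (\pi(1+\quark^{2}))^{-1}$, so $\logden^{(n)} = \logden$ for all $n$; consequently the hypotheses ``$\rho^{(n)} \to \rho^{(\infty)}$ pointwise'', ``$\logden^{(n)} \to \logden^{(\infty)}$ locally uniformly'', and ``$\logden^{(n)} \leq \logden^{(\infty)}$ for all but finitely many $n$'' are all trivially true equalities. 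The remaining hypotheses are immediate as well: all $\mu^{(n)}$ live on the common space $X = \ell^{q}$ by \Cref{thm:Cauchy_distributed_RV} (using \Cref{assump:Cauchy_technical} for the same $q$, which in particular puts each $\gamma^{(n)} \in \ell^{1} \subseteq \ell^{q} = X$); \ac{OM} functionals with $I_{\mu^{(n)}}(m^{(n)}) = 0$ of the stated form $\logden_{\gamma^{(n)},m^{(n)}}^{(n)}$ exist by the first part; and $\norm{m^{(n)} - m}_{X} \to 0$, $\norm{\gamma^{(n)} - \gamma}_{X} \to 0$ are assumed. Thus \Cref{thm:Equicoercivity_for_product_measures} yields equicoercivity of $(I_{\mu^{(n)}})_{n\in\Naturals}$ and \Cref{thm:Gamma_convergence_for_product_measures} yields $I_{\mu} = \Gammalim_{n\to\infty} I_{\mu^{(n)}}$, while the Banach-space parts of those two theorems (which go through \Cref{lemma:From_RN_to_Banach}) transfer both conclusions to $(I_{\mu_{\psi}^{(n)}})_{n\in\Naturals}$ on $Z$. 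Apart from the $E_{\gamma,m} = m + \ell_{\gamma}^{2}$ computation, no part of this should present a genuine difficulty, since the substantive work is done by \Cref{thm:OM_for_product_measures,thm:Equicoercivity_for_product_measures,thm:Gamma_convergence_for_product_measures}.
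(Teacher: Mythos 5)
Your proposal is correct and follows essentially the same route as the paper's own proof: verify \Cref{assump:basic_assumptions_for_product_measures} via \Cref{lemma:Cauchy_fulfills_product_measure_assumptions}, compute $\logden(u)=\log(1+u^{2})$ and $E_{\gamma,m}=m+\ell_{\gamma}^{2}$, and then invoke \Cref{thm:OM_for_product_measures}, \Cref{lemma:From_RN_to_Banach}, and \Cref{thm:Equicoercivity_for_product_measures,thm:Gamma_convergence_for_product_measures}. You even supply details the paper leaves implicit, such as the two-sided bound $\tfrac12 t_{k}^{2}\leq\log(1+t_{k}^{2})\leq t_{k}^{2}$ for small $t_{k}$ and the observation that the fixed reference density makes the convergence hypotheses on $\rho^{(n)}$ and $\logden^{(n)}$ trivial; the only ingredient the paper cites that you leave tacit is \Cref{cor:handy_inclusion_of_weighted_lp_spaces_general} ensuring $\ell_{\gamma}^{2}\subseteq X$.
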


\begin{proof}
	\Cref{assump:basic_assumptions_for_product_measures} \ref{item:basic_assumption_product_full_measure}--\ref{item:basic_assumption_product_integrable_second_derivative} are satisfied by \Cref{lemma:Cauchy_fulfills_product_measure_assumptions}.
	We have
	\[
	\logden(u) = \log(1+u^{2}),
	\qquad
	\logden_{\gamma,m}(h) = \sum_{k\in\Naturals}\log \bigl( 1 + \gamma_{k}^{-2} (h_{k} - m_{k})^{2} \bigr),
	\qquad
	E_{\gamma,m} = m + \ell_{\gamma}^{2},
	\]
	where we used that $\sum_{k\in\Naturals}\log \bigl( 1 + \gamma_{k}^{-2} (h_{k} - m_{k})^{2} \bigr)$ is finite if and only if $h-m \in \ell_{\gamma}^{2}$, as well as \Cref{cor:handy_inclusion_of_weighted_lp_spaces_general} to guarantee that $\ell_{\gamma}^{2} \subseteq X$.
	Thus, the first statement follows from \Cref{thm:OM_for_product_measures}, i.e.\ the result in \Cref{conjecture:OM_for_product_measures} holds for the Cauchy measures $\mu$ and $\mu^{(n)}$, $n\in\Naturals$:
	\change{\begin{align*}
		\lim_{r \searrow 0} \frac{\mu( \cBall{h}{r} )}{\mu( \cBall{m}{r})}
		&=
		\begin{cases}
			\exp\left( - \logden_{\gamma,m}(h)  \right) & \text{if } h \in E_{\gamma,m},
			\\
			0 & \text{if } h \notin E_{\gamma,m},
		\end{cases}
    \end{align*}
    and a similar result holds with $\mu$ replaced by $\mu^{(n)}$.}
	The analogous statement for  $I_{\mu_{\psi}}$ and $I_{\mu_{\psi}^{(n)}},\ n\in\Naturals$, follows from \Cref{lemma:From_RN_to_Banach}. \change{Recall that this lemma shows that an \ac{OM} functional on the sequence space $X$ yields an \ac{OM} functional on the separable Banach space $Z$, where $X$ and $Z$ are related by the synthesis operator $S_{\psi}:X\to Z$.}
	\change{The} equicoercivity and $\Gamma$-convergence of the sequences $(I_{\mu^{(n)}})_{n\in\Naturals}$ and $(I_{\mu_{\psi}^{(n)}})_{n\in\Naturals}$ \change{now} follow \change{directly} from \Cref{thm:Equicoercivity_for_product_measures,thm:Gamma_convergence_for_product_measures} \change{respectively}.
\end{proof}

\section{Closing remarks}
\label{sec:closing}

In this paper, our first main contribution is to obtain a formula for the \ac{OM} functionals of a class of probability measures on a weighted sequence space $X=\ell_{\alpha}^{p}$.
This class is defined using \Cref{assump:basic_assumptions_for_product_measures}, and the key result that we used to obtain these formulas is \Cref{thm:OM_for_product_measures}.
In addition, we considered collections of measures in this class that converge to a limiting measure in the sense that the collections of shift and scale sequences converge to a limiting pair of shift and scale sequences, and convergence of the Lebesgue densities of the associated reference measures.
Our second main contribution is to state sufficient conditions for equicoercivity and $\Gamma$-convergence of the corresponding sequence of \ac{OM} functionals.
For this, we relied on \Cref{thm:Equicoercivity_for_product_measures} and \Cref{thm:Gamma_convergence_for_product_measures}.
In addition, we applied these results to Cauchy and Besov-$p$ measures for $1\leq p \leq 2$.
We used the results in the weighted sequence space setting to prove the analogous results for measures on separable Banach or Hilbert spaces.

In the context of \ac{BIP}s, the Besov, Cauchy, and more general product measures considered in this paper arise most naturally as prior distributions.
The results of this paper therefore provide a convergence theory for the corresponding prior \ac{OM} functionals.
Since these priors are unimodal, this convergence theory would appear to be surplus to requirements;
it is in some sense ``obvious'' how the modes of sequences of such measures ought to converge.
However, the importance of this paper's results is that prior $\Gamma$-convergence and equicoercivity can be transferred to the posterior using the results of Part~I of this paper \citep[Section~6]{AyanbayevKlebanovLieSullivan2021_I}, and understanding the convergence of posterior modes (i.e.\ \ac{MAP} estimators) is a non-trivial and novel contribution.

An important open problem raised in this paper is \Cref{conjecture:OM_for_product_measures}.
Proving this conjecture would significantly enhance the applicability of our results.
In addition, it would be of interest to study equicoercivity and $\Gamma$-convergence of so-called ``generalised \ac{OM} functionals'' as introduced by \citet{Clason2019GeneralizedMI}.

\appendix

\section{Equivalence of product measures}
\label{sec:equivalence}

The following two dichotomies on the equivalence or mutual singularity of certain infinite product measures are classical results.
Here, $H(\mu,\nu)$ denotes the Hellinger integral defined in \eqref{eq:Hellinger_integral}.

\begin{theorem}[\citealp{Kakutani1948}]
	\label{thm:kakutanis_theorem}
	Let $(\mu_{k})_{k \in \Naturals}$ and $(\nu_{k})_{k \in \Naturals}$ be sequences in $\prob{\Reals}$ such that $\mu_{k} \sim \nu_{k}$ for all $k \in \Naturals$, and let $\mu \defeq \bigotimes_{k \in \Naturals} \mu_{k}$ and $\nu \defeq \bigotimes_{k \in \Naturals} \nu_{k}$.
	Then precisely one of the following alternatives holds true:
	\begin{enumerate}[label=(\alph*)]
		\item $H(\mu, \nu) = \prod_{k \in \Naturals} H(\mu_{k}, \nu_{k}) > 0$ and $\mu \sim \nu$, with density
		\begin{equation}
			\frac{\rd \nu}{\rd \mu} (u) = \lim_{K \to \infty} \prod_{k = 1}^{K} \frac{\rd \nu_{k}}{\rd \mu_{k}} (u_{k}) \quad \text{in } L^{1} (\Reals^{\Naturals}, \mu).
		\end{equation}
		\item $H(\mu, \nu) = \prod_{k \in \Naturals} H(\mu_{k}, \nu_{k}) = 0$ and $\mu \perp \nu$.
	\end{enumerate}
\end{theorem}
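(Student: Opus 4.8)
The plan is to run the classical partial-product martingale argument. Since $\mu_{k} \sim \nu_{k}$, each $f_{k} \defeq \frac{\rd \nu_{k}}{\rd \mu_{k}}$ exists, is $\mu_{k}$-a.e.\ positive and finite, and $c_{k} \defeq H(\mu_{k}, \nu_{k}) = \int_{\Reals} \sqrt{f_{k}} \, \rd \mu_{k} \in (0, 1]$, the upper bound $c_{k} \leq 1$ being Cauchy--Schwarz. Work on $(\Reals^{\Naturals}, \Borel{\Reals^{\Naturals}}, \mu)$ with the filtration $\mathcal{F}_{n} \defeq \sigma(u_{1}, \dots, u_{n})$. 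The partial products $P_{n}(u) \defeq \prod_{k=1}^{n} f_{k}(u_{k})$ form a nonnegative $(\mathcal{F}_{n})$-martingale with $\bE_{\mu}[P_{n}] = 1$, because the coordinate projections are $\mu$-independent and $\bE_{\mu}[f_{k}(u_{k})] = \nu_{k}(\Reals) = 1$. Hence $P_{n} \to P_{\infty}$ $\mu$-a.s.\ for some $P_{\infty} \geq 0$ with $\bE_{\mu}[P_{\infty}] \leq 1$; the whole question is whether this convergence is also in $L^{1}(\mu)$, and the Hellinger product decides it.

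The central computation is that for $m < n$,
\[
\bE_{\mu}\bigl[\bigl\lvert \sqrt{P_{n}} - \sqrt{P_{m}}\bigr\rvert^{2}\bigr]
=
\bE_{\mu}[P_{n}] + \bE_{\mu}[P_{m}] - 2\,\bE_{\mu}[\sqrt{P_{n} P_{m}}]
=
2\Bigl(1 - \prod_{k = m+1}^{n} c_{k}\Bigr),
\]
using $\sqrt{P_{n} P_{m}} = P_{m} \prod_{k = m+1}^{n} \sqrt{f_{k}(u_{k})}$, independence, and $\bE_{\mu}[P_{m}] = 1$. In case (a), $\prod_{k} c_{k} > 0$ means the tail products $\prod_{k = m+1}^{n} c_{k} \to 1$, so $(\sqrt{P_{n}})_{n}$ is Cauchy in $L^{2}(\mu)$ and converges there to some $g \geq 0$; then $P_{n} = (\sqrt{P_{n}})^{2} \to g^{2}$ in $L^{1}(\mu)$ via $\lvert P_{n} - g^{2}\rvert = \lvert \sqrt{P_{n}} - g\rvert\,\lvert \sqrt{P_{n}} + g\rvert$ and Cauchy--Schwarz, so $P_{\infty} = g^{2}$ and $\bE_{\mu}[P_{\infty}] = 1$. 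To identify $P_{\infty}$ as $\frac{\rd \nu}{\rd \mu}$ I would verify $\nu(A) = \int_{A} P_{\infty} \, \rd \mu$ first for cylinders $A \in \mathcal{F}_{m}$ — there $\int_{A} P_{n} \, \rd \mu = \nu(A)$ for $n \geq m$ by the martingale property and the definition of $P_{m}$, then pass to the $L^{1}$-limit — and extend to $\Borel{\Reals^{\Naturals}}$ by a $\pi$--$\lambda$ argument, the cylinders being a $\pi$-system generating the product $\sigma$-algebra. Interchanging $\mu$ and $\nu$ (replace $f_{k}$ by $f_{k}^{-1}$, noting $H(\nu_{k}, \mu_{k}) = c_{k}$) yields $\mu \ll \nu$ as well, hence $\mu \sim \nu$; and $H(\mu, \nu) = \bE_{\mu}[\sqrt{P_{\infty}}] = \lim_{n} \bE_{\mu}[\sqrt{P_{n}}] = \prod_{k} c_{k} > 0$.

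In case (b), $\prod_{k} c_{k} = 0$ gives $\bE_{\mu}[\sqrt{P_{n}}] = \prod_{k = 1}^{n} c_{k} \to 0$, and since $\sqrt{P_{n}} \to \sqrt{P_{\infty}}$ $\mu$-a.s., Fatou forces $\bE_{\mu}[\sqrt{P_{\infty}}] = 0$, i.e.\ $P_{\infty} = 0$ $\mu$-a.s. Running the symmetric construction on $(\Reals^{\Naturals}, \nu)$ with $Q_{n} \defeq \prod_{k = 1}^{n} f_{k}(u_{k})^{-1}$, a nonnegative $\nu$-martingale with $\bE_{\nu}[\sqrt{Q_{n}}] = \prod_{k = 1}^{n} c_{k} \to 0$, gives $Q_{\infty} = 0$ $\nu$-a.s.\ by the identical Fatou estimate; since $P_{n} Q_{n} = 1$ on a set of full $\mu$- and full $\nu$-measure (again using $\mu_{k} \sim \nu_{k}$), on that set $P_{n} = 1/Q_{n} \to +\infty$ $\nu$-a.s. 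Therefore $A \defeq \{ u : \liminf_{n} P_{n}(u) = 0 \}$ satisfies $\mu(A) = 1$ and $\nu(A) = 0$, so $\mu \perp \nu$; and $H(\mu, \nu) = 0$ follows either directly from mutual singularity, or from multiplicativity of the Hellinger integral over finite products, $H\bigl(\bigotimes_{k = 1}^{n} \mu_{k}, \bigotimes_{k = 1}^{n} \nu_{k}\bigr) = \prod_{k = 1}^{n} c_{k}$, combined with the monotonicity $H(\mu, \nu) \leq H\bigl(\bigotimes_{k \leq n} \mu_{k}, \bigotimes_{k \leq n} \nu_{k}\bigr)$.

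The two alternatives exclude each other, since $\mu \sim \nu$ and $\mu \perp \nu$ cannot hold simultaneously for probability measures, and exactly one occurs according to whether $\prod_{k} c_{k}$ vanishes. I expect the main obstacle to be the $L^{1}(\mu)$-convergence in case (a): everything rests on the identity for $\bE_{\mu}[\lvert \sqrt{P_{n}} - \sqrt{P_{m}}\rvert^{2}]$, which shows that a \emph{positive} Hellinger product is precisely what makes $(\sqrt{P_{n}})$ Cauchy in $L^{2}(\mu)$ and hence $(P_{n})$ uniformly integrable; once that is secured, identifying the limit as the Radon--Nikodym derivative via cylinder sets and reading off the value of $H(\mu, \nu)$ are routine.
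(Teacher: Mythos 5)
The paper offers no proof of this statement at all: it is quoted as a classical result directly from \citet{Kakutani1948} in Appendix~A, so there is no in-paper argument to compare yours against. Your proposal is the standard martingale proof of Kakutani's dichotomy and it is correct. The key points all check out: the partial products $P_n$ are a nonnegative mean-one martingale under $\mu$; the identity $\bE_{\mu}\bigl[\absval{\sqrt{P_n}-\sqrt{P_m}}^2\bigr] = 2\bigl(1-\prod_{k=m+1}^{n} c_k\bigr)$ (with $c_k \in (0,1]$, so a positive infinite product forces the tail products to tend to one) is exactly what turns positivity of the Hellinger product into an $L^2(\mu)$-Cauchy property for $\sqrt{P_n}$, hence $L^1(\mu)$-convergence of $P_n$; the cylinder-set plus $\pi$--$\lambda$ identification of the limit as $\rd\nu/\rd\mu$, the role-reversal via $H(\nu_k,\mu_k)=H(\mu_k,\nu_k)$ to get $\mu \ll \nu$, and the evaluation $H(\mu,\nu)=\bE_{\mu}[\sqrt{P_\infty}]=\prod_k c_k$ are all sound. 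In the singular case, the two Fatou arguments (under $\mu$ for $P_n$ and under $\nu$ for $Q_n$, using that $P_n Q_n = 1$ on a set of full measure for both, which follows from $f_k\in(0,\infty)$ a.e.\ with respect to both marginals) correctly produce the separating event $\{\liminf_n P_n = 0\}$, and either of your two routes to $H(\mu,\nu)=0$ works. For the record, Kakutani's original argument was cast in terms of infinite tensor products of Hilbert spaces rather than martingales; your route is the one standard in modern textbooks and is, if anything, more self-contained for the present setting.
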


\begin{theorem}[{\citealp{Shepp1965}}]
	\label{thm:InvariantSpaceShepp}
	Let $\mu_{0} \in \prob{\Reals}$ have
	Lebesgue probability density $\rho$
	that satisfies \Cref{assump:basic_assumptions_for_product_measures} \ref{item:basic_assumption_product_Shepp_condition}.
	Further, let $h = (h_{k})_{k\in\Naturals} \in \Reals^{\Naturals}$, $\mu \defeq \bigotimes_{k\in\Naturals} \mu_{0}$, and $\nu \defeq \bigotimes_{k\in\Naturals} \mu_{0} (\quark - h_{k})$.
	Then precisely one of the following alternatives holds true:
	\begin{enumerate}[label=(\alph*)]
		\item
		$\sum_{k \in \Naturals} h_{k}^{2} < \infty$ and $\mu \sim \nu$.
		\item
		$\sum_{k \in \Naturals} h_{k}^{2} = \infty$ and $\mu \perp \nu$.
	\end{enumerate}
\end{theorem}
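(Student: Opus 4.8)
The plan is to reduce everything to Kakutani's theorem (\Cref{thm:kakutanis_theorem}) applied to the one-dimensional factors $\mu_{k} \defeq \mu_{0}$ and $\nu_{k} \defeq \mu_{0}(\quark - h_{k})$, and then to estimate the Hellinger integrals $H_{k} \defeq H(\mu_{0},\mu_{0}(\quark - h_{k}))$ sharply in terms of $h_{k}$. First I would verify the hypotheses of \Cref{thm:kakutanis_theorem}: since $\rho > 0$ Lebesgue-a.e.\ by \Cref{assump:basic_assumptions_for_product_measures} \ref{item:basic_assumption_product_Shepp_condition}, the same holds for every shifted density $\rho(\quark - h_{k})$, so $\mu_{k} \sim \nu_{k}$ for each $k$ and, moreover, $0 < H_{k} \leq 1$. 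Kakutani's dichotomy then says $\mu \sim \nu$ iff $\prod_{k} H_{k} > 0$ and $\mu \perp \nu$ iff $\prod_{k} H_{k} = 0$; since $H_{k} \in (0,1]$, and using $1 - x \leq -\log x \leq 2(1-x)$ on $[\tfrac12,1]$, one has $\prod_{k} H_{k} > 0$ iff $\sum_{k}(1 - H_{k}) < \infty$. As $\sum_{k} h_{k}^{2}$ is either finite or infinite, and the condition $\sum_{k} h_{k}^{2} < \infty$ is equivalent to $\sum_{k}\min(h_{k}^{2},1) < \infty$, the full statement will follow once I prove a two-sided bound
\[
  c\,\min(h_{k}^{2},1) \leq 1 - H_{k} \leq C\,h_{k}^{2} \qquad (k \in \Naturals)
\]
for constants $0 < c \leq C < \infty$ depending only on $\mu_{0}$.

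The analytic core is this bound. Set $g \defeq \sqrt{\rho}$. The key lemma I would establish is that finite Fisher information together with local absolute continuity of $\rho$ forces $g \in W^{1,2}(\Reals)$, with $g' = \rho'/(2\sqrt{\rho})$ a.e., $\norm{g'}_{L^{2}}^{2} = \tfrac14\int_{\Reals}(\rho')^{2}/\rho < \infty$, $\norm{g}_{L^{2}}^{2} = 1$, and $\norm{g'}_{L^{2}} > 0$ (a nonzero $L^{2}(\Reals)$ function cannot be constant). Granting this, $H_{k} = \int_{\Reals} g(u)\,g(u - h_{k})\,\rd u = \langle g, \tau_{h_{k}}g\rangle_{L^{2}}$, where $\tau_{t}$ denotes translation by $t$, so $1 - H_{k} = \tfrac12\norm{g - \tau_{h_{k}}g}_{L^{2}}^{2}$. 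The upper bound is the standard $H^{1}$ translation estimate $\norm{g - \tau_{t}g}_{L^{2}}^{2} \leq t^{2}\norm{g'}_{L^{2}}^{2}$ (fundamental theorem of calculus, Cauchy--Schwarz, Fubini), which gives $C = \tfrac12\norm{g'}_{L^{2}}^{2}$ and, via Kakutani, already proves $\sum_{k} h_{k}^{2} < \infty \Rightarrow \mu \sim \nu$.

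For the matching lower bound I would pass to the Fourier side: by Plancherel $\norm{g - \tau_{t}g}_{L^{2}}^{2} = \int 4\sin^{2}(t\xi/2)\,\absval{\hat g(\xi)}^{2}\,\rd\xi$, and dominated convergence (with dominating function $\xi^{2}\absval{\hat g(\xi)}^{2}$, integrable because $g \in H^{1}$) gives $\norm{g - \tau_{t}g}_{L^{2}}^{2}/t^{2} \to \norm{g'}_{L^{2}}^{2} > 0$ as $t \to 0$; hence $1 - H_{k} \geq c_{0}h_{k}^{2}$ whenever $\absval{h_{k}}$ lies below some threshold $t_{0}$. For $\absval{t} \geq t_{0}$ I would use that $H(t) \defeq \langle g, \tau_{t}g\rangle$ is continuous (strong continuity of translation on $L^{2}$), strictly less than $1$ for $t \neq 0$ (Cauchy--Schwarz equality would force $\tau_{t}g = g$, i.e.\ $g$ periodic, impossible in $L^{2}(\Reals)$), and tends to $0$ as $\absval{t} \to \infty$; therefore $\sup_{\absval{t}\geq t_{0}}H(t) < 1$, which supplies the constant $c$ on $\{\absval{t}\geq t_{0}\}$. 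Combining the two regimes and invoking Kakutani once more, $\sum_{k} h_{k}^{2} = \infty \Rightarrow \sum_{k}(1 - H_{k}) = \infty \Rightarrow \prod_{k} H_{k} = 0 \Rightarrow \mu \perp \nu$. Mutual exclusivity of the two alternatives is clear, which completes the plan.

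The step I expect to be the main obstacle is the regularity lemma $g = \sqrt{\rho} \in W^{1,2}(\Reals)$: since $\rho$ is continuous and a.e.\ positive, its zero set is closed and Lebesgue-null, and the chain rule for $\sqrt{\cdot}$ must be justified across these zeros. I would handle this by a truncation/regularisation argument --- working with $\sqrt{\rho + \varepsilon}$, or restricting to the open set $\{\rho > \varepsilon\}$, and passing to $\varepsilon \searrow 0$ using the uniform control afforded by finiteness of $\int_{\Reals}(\rho')^{2}/\rho$ --- or, alternatively, by citing the corresponding statement from the theory of differentiable measures. Everything else (the Kakutani reduction, the $H^{1}$ translation estimate, the Plancherel identity, and the bookkeeping relating $\sum_{k} h_{k}^{2}$, $\sum_{k}\min(h_{k}^{2},1)$ and $\sum_{k}(1 - H_{k})$) is routine.
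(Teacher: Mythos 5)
Your proposal is correct, but note that the paper does not prove this statement at all: it is quoted in Appendix~\ref{sec:equivalence} as a classical result of \citet{Shepp1965}, so there is no internal proof to compare against. What you have written is essentially the standard modern proof of Shepp's dichotomy under the finite-Fisher-information hypothesis \ref{item:basic_assumption_product_Shepp_condition}: reduce to \Cref{thm:kakutanis_theorem} (the a.e.\ positivity of $\rho$ gives $\mu_{k}\sim\nu_{k}$ and $0<H_{k}\leq 1$, and your bookkeeping relating $\prod_{k}H_{k}>0$, $\sum_{k}(1-H_{k})<\infty$, $\sum_{k}\min(h_{k}^{2},1)<\infty$ and $\sum_{k}h_{k}^{2}<\infty$ is sound), and then establish $1-H(t)=\tfrac12\norm{\sqrt{\rho}-\tau_{t}\sqrt{\rho}}_{L^{2}}^{2}\asymp\min(t^{2},1)$. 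The one step you rightly flag as delicate, $\sqrt{\rho}\in H^{1}(\Reals)$ with $\norm{(\sqrt{\rho})'}_{L^{2}}^{2}=\tfrac14\int_{\Reals}(\rho')^{2}/\rho$, does go through by your regularisation: $g_{\varepsilon}\defeq\sqrt{\rho+\varepsilon}$ is locally absolutely continuous with $\absval{g_{\varepsilon}'}\leq\absval{\rho'}/(2\sqrt{\rho})\in L^{2}(\Reals)\subseteq L^{1}_{\mathrm{loc}}$, so dominated convergence in $g_{\varepsilon}(b)-g_{\varepsilon}(a)=\int_{a}^{b}g_{\varepsilon}'$ identifies the a.e.\ derivative of $\sqrt{\rho}$ and its $L^{2}$ norm; the remaining ingredients (the $H^{1}$ translation estimate for the upper bound, Plancherel plus $\norm{(\sqrt{\rho})'}_{L^{2}}>0$ for the small-$t$ lower bound, and continuity, strict inequality $H(t)<1$ for $t\neq 0$, and decay of $H$ at infinity for the large-$t$ regime) are all justified as you describe. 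The only caveat worth recording is that your argument proves exactly the statement as used in the paper (sufficiency of finite Fisher information for the $\ell^{2}$ dichotomy), whereas Shepp's original paper establishes a stronger result including converse statements; for the purposes of \Cref{thm:shift_quasi_invariance_space_for_product_measures} your version is all that is needed.
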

\section{Technical supporting results}
\label{sec:technical}

\begin{lemma}
	\label{lemma:Sigma_algebras_are_fine}
	Let $X = \ell_{\alpha}^{p}$ for some $1 \leq p < \infty$ and $\alpha \in \Reals_{> 0}^{\Naturals}$ and let $Y=\Reals^{\Naturals}$ be equipped with the product topology and the corresponding Borel $\sigma$-algebra $\Borel{Y}$.
	Then $\Borel{X} \subseteq \Borel{Y}$.
\end{lemma}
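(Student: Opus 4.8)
The plan is to reduce the inclusion to a statement about generators. Since $X = \ell_{\alpha}^{p}$ is a separable metric space, its Borel $\sigma$-algebra $\Borel{X}$ is generated by the collection of open balls $\cBall{x}{r}$, $x \in X$, $r > 0$ (every open subset of a separable metric space is a countable union of open balls). Hence it suffices to produce a $\sigma$-algebra on the set $X$ that is contained in $\Borel{Y}$ and that contains every open ball of $X$; the natural candidate is $\mathcal{A} \defeq \set{A \in \Borel{Y}}{A \subseteq X}$.

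First I would check that $X$ itself is an element of $\Borel{Y}$. For each $n, M \in \Naturals$, the map $Y \to \Reals$, $y \mapsto \sum_{k=1}^{n} \absval{y_{k}/\alpha_{k}}^{p}$, is continuous for the product topology, being a finite sum of (continuous) coordinate functions, so $F_{n,M} \defeq \set{y \in Y}{\sum_{k=1}^{n}\absval{y_{k}/\alpha_{k}}^{p} \leq M}$ is closed in $Y$. Consequently
\[
	X = \bigcup_{M \in \Naturals} \bigcap_{n \in \Naturals} F_{n,M} \in \Borel{Y}.
\]
Because $X \in \Borel{Y}$, the family $\mathcal{A}$ is closed under relative complements (for $A \in \mathcal{A}$ one has $X \setminus A = X \cap (Y \setminus A) \in \Borel{Y}$) and it is trivially closed under countable unions and contains $\emptyset$, so $\mathcal{A}$ is a $\sigma$-algebra on $X$.

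It remains to show that every open ball of $X$ belongs to $\mathcal{A}$. Fix $x \in X$ and $r > 0$, and define $g \colon Y \to \eReals_{\geq 0}$ by $g(y) \defeq \sup_{n \in \Naturals} \bigl( \sum_{k=1}^{n} \absval{(y_{k} - x_{k})/\alpha_{k}}^{p} \bigr)^{1/p}$. Each partial-sum function is continuous on $Y$, so $g$ is a countable supremum of continuous functions, hence $\Borel{Y}$-measurable, and therefore $\set{y \in Y}{g(y) < r} \in \Borel{Y}$. Now $g(y) < \infty$ forces $y - x \in \ell_{\alpha}^{p}$, hence $y \in X$ (since $x \in X$), so this set is contained in $X$; and on $X$ one has $g(y) = \norm{y - x}_{\ell_{\alpha}^{p}}$ because the partial sums increase to the full sum, so $\set{y \in Y}{g(y) < r} = \cBall{x}{r}$. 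Thus $\cBall{x}{r} \in \mathcal{A}$, and since $\mathcal{A}$ is a $\sigma$-algebra on $X$ containing all open balls we conclude $\Borel{X} \subseteq \mathcal{A} \subseteq \Borel{Y}$. I do not expect a genuine obstacle here; the only points needing care are the use of separability of $\ell_{\alpha}^{p}$ (so that $\Borel{X}$ is ball-generated) and the observation that a sublevel set $\{g < r\}$ with $r$ finite is automatically a subset of $X$, which is what makes $\mathcal{A}$ large enough.
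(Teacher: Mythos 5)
Your proof is correct and takes essentially the same route as the paper: the paper likewise shows that $y \mapsto \norm{y-z}_{\ell_{\alpha}^{p}}^{p}$ is Borel on $Y$ as a countable sum of continuous (coordinate-wise) functions, concludes that each open ball of $X$ lies in $\Borel{Y}$, and then uses that $\Borel{X}$ is ball-generated. Your additional steps (checking $X \in \Borel{Y}$ and forming the $\sigma$-algebra $\mathcal{A}$ of Borel subsets of $Y$ contained in $X$) simply make explicit the transfer argument the paper leaves implicit.
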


\begin{proof}
	By definition of the product topology, for $i\in\Naturals$, the projections $\pi_{i}(y) = y_{i}$, $y\in Y$, are continuous and so are the functions $f_{i}(y) = \absval{\frac{y_{i}-z_{i}}{\alpha_{i}}}^{p}$, where $z\in Y$ is any fixed sequence.
	Hence, the $(f_{i})_{i\in\Naturals}$ are Borel measurable, and so is the function $f(y) = \norm{y-z}_{\ell_{\alpha}^{p}}^{p}$ as a countable sum of non-negative measurable functions.
	Therefore each open ball $\cBall{z}{r} = f^{-1}( (-\infty,r^p) )$ lies in $\Borel{Y}$, and we have shown that $\Borel{X} \subseteq \Borel{Y}$.
\end{proof}

\begin{remark}
	In fact, $\Borel{X} = \{ B \cap X \mid B \in \Borel{Y}\}$.
	This can seen by considering sets of the form $\pi_{i}^{-1}( (a,b) ) \cap \ell_{\alpha}^{p}$, $a,b\in \Reals$.
	The collection of these sets forms a generator of $\Borel{\Reals^{\Naturals}}\cap\ell^p_\alpha$.
	The sets belong to $\Borel{\ell^p_\alpha}$, since they are open in $\ell^p_\alpha$.
\end{remark}

\begin{lemma}
	\label{lemma:Connection_between_scaling_sequences}
	Let \Cref{assump:basic_assumptions_for_product_measures} \ref{item:basic_assumption_product_full_measure}--\ref{item:basic_assumption_product_scaled_marginals} hold.
	Then:
	\begin{enumerate}[label = (\alph*)]
		\item
		\label{item:gamma_square_summable}
		$\gamma \in \ell_{\alpha}^{p}$.
		\item
		\label{item:gamma_tau_summable}
		$\gamma \in \ell_{\alpha}^{\tau}$ for some $0 < \tau < \infty $, if the following condition is fulfilled:
		\begin{equation}
		\label{equ:Decay_Rate_Density}
		\exists C>0\, \exists x_{0}>0\colon
		\quad
		x\geq x_{0}
		\ \implies\
		\int_{x}^{\infty} \rho(y)\, \rd y \geq C x^{-\tau}.
		\end{equation}
	\end{enumerate}
\end{lemma}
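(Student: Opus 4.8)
The plan is to read off the required summability of the scale sequence $\gamma$ from the support condition \ref{item:basic_assumption_product_full_measure} by a Kolmogorov three-series argument, and then, for part~\ref{item:gamma_tau_summable}, to feed in the tail-decay hypothesis \eqref{equ:Decay_Rate_Density} on $\rho$.

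First I would set $\tilde\mu \defeq \bigotimes_{k\in\Naturals}\mu_0$ and write a $\tilde\mu$-distributed sequence as $z = (z_k)_{k\in\Naturals}$, so that $(\gamma_k z_k + m_k)_{k\in\Naturals}$ is $\mu$-distributed by \ref{item:basic_assumption_product_scaled_marginals}. By \ref{item:basic_assumption_product_full_measure}, $\sum_{k\in\Naturals}\absval{(\gamma_k z_k + m_k)/\alpha_k}^{p}<\infty$ for $\tilde\mu$-a.e.\ $z$; since $m\in X=\ell_\alpha^p$ and $p\ge 1$, the triangle inequality together with subadditivity of $x\mapsto\min(x,1)$ absorbs the shift and gives $\sum_{k\in\Naturals}\min\bigl(\absval{\gamma_k z_k/\alpha_k}^{p},1\bigr)<\infty$ $\tilde\mu$-a.s.\ as well. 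The summands $Y_k\defeq\min(\absval{\gamma_k z_k/\alpha_k}^{p},1)$ are independent, nonnegative and bounded by $1$, so a standard fact (the converse part of Kolmogorov's three-series theorem, or equivalently an elementary Chebyshev estimate since $Y_k\in[0,1]$) upgrades this to $\sum_{k\in\Naturals}\bE_{\tilde\mu}[Y_k]<\infty$. This reduction is the heart of the matter; everything after it is a comparison of $\bE_{\tilde\mu}[Y_k]$ with an explicit probability that visibly involves $\gamma_k/\alpha_k$.

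For part~\ref{item:gamma_square_summable} I would note that, by \ref{item:basic_assumption_product_mu_0}, $\rho$ is strictly positive (a continuous density that is strictly decreasing on $\Reals_{\ge 0}$ cannot vanish), so $q\defeq\mu_0(\set{u\in\Reals}{\absval{u}\ge 1})>0$; on the event $\{\absval{z_k}\ge 1\}$ one has $\absval{\gamma_k z_k/\alpha_k}^{p}\ge\absval{\gamma_k/\alpha_k}^{p}$, hence $\bE_{\tilde\mu}[Y_k]\ge q\min(\absval{\gamma_k/\alpha_k}^{p},1)$. With the reduction this gives $\sum_k\min(\absval{\gamma_k/\alpha_k}^{p},1)<\infty$; in particular $\gamma_k/\alpha_k\to 0$, the truncation is eventually inactive, and $\sum_k\absval{\gamma_k/\alpha_k}^{p}<\infty$, i.e.\ $\gamma\in\ell_\alpha^p$. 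For part~\ref{item:gamma_tau_summable} the case $\tau\ge p$ is immediate from part~\ref{item:gamma_square_summable} because $\gamma_k/\alpha_k\to 0$, so I may assume $\tau<p$. Setting $b_k\defeq\alpha_k/\gamma_k$ (so $b_k\to\infty$ and thus $b_k\ge x_0$ for all large $k$), I would bound $Y_k\ge\one_{\{\absval{z_k}\ge b_k\}}$ and use symmetry of $\rho$ together with \eqref{equ:Decay_Rate_Density}:
\[
\bE_{\tilde\mu}[Y_k]\;\ge\;\mu_0(\set{u}{\absval{u}\ge b_k})\;=\;2\int_{b_k}^{\infty}\rho(y)\ud y\;\ge\;2C\,b_k^{-\tau}\;=\;2C\,\absval{\gamma_k/\alpha_k}^{\tau}
\]
for all large $k$. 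The reduction then yields $\sum_k\absval{\gamma_k/\alpha_k}^{\tau}<\infty$, i.e.\ $\gamma\in\ell_\alpha^\tau$.

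The main obstacle is the first step: justifying the passage from almost-sure convergence of a series of independent random variables to summability of the truncated means. This is precisely the elementary direction of Kolmogorov's three-series theorem for the bounded nonnegative variables $Y_k$, and the only mildly delicate bookkeeping is the removal of the shift $m$, which is where $m\in\ell_\alpha^p$ and the hypothesis $p\ge 1$ enter. Once the reduction is available, both conclusions fall out of essentially one-line tail estimates for $\mu_0$.
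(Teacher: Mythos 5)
Your proposal is correct and takes essentially the same route as the paper: the paper also reduces the almost-sure finiteness of $\sum_k \absval{\gamma_k \rv{u}_k/\alpha_k}^p$ (with the shift $m$ removed, there by translating to $m=0$, in your case by the triangle/truncation argument) to summability of truncated means and tail probabilities for independent nonnegative variables, citing Kallenberg's convergence criterion in place of your appeal to the necessity direction of the three-series theorem. The subsequent comparisons with $\absval{\gamma_k/\alpha_k}^{p}$ for part (a) and, via \eqref{equ:Decay_Rate_Density}, with $\absval{\gamma_k/\alpha_k}^{\tau}$ for part (b) match the paper's estimates up to minor differences in the choice of lower bound.
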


\begin{proof}
	Let $m=0$ and $\rv{v} = (\rv{v}_{k})_{k\in\Naturals} \sim \mu$, i.e.\ $\rv{v}_{k} = \gamma_{k} \rv{u}_{k}$ with $\rv{u}_{k} \stackrel{\text{i.i.d.}}{\sim} \mu_{0}$.
	Note that we may assume $m=0$ without loss of generality since $m\in X$ and therefore $\rv{v} \in X$ if and only if $\rv{v}+m \in X$.
	Let $\rv{w}_{k} \defeq \absval{\frac{\gamma_{k}}{\alpha_{k}} \rv{u}_{k}}^{p}$.
	Since $\mu(\ell_{\alpha}^{p}) = 1$, $\norm{ \rv{v} }_{\ell_{\alpha}^{p}}^{p} = \sum_{k \in \Naturals} \rv{w}_{k} < \infty$ a.s., which, by \citet[Theorem~5.18]{kallenberg2021foundations}, implies:
	\begin{enumerate}[label = (\roman*)]
		\item
		\label{item:Convergence_Of_Random_Series_Criterion_1}
		for any $A > 0$, $\sum_{k \in \Naturals} \bP( \absval{ \rv{w}_{k}} > A ) < \infty $ and
		\item
		\label{item:Convergence_Of_Random_Series_Criterion_2}
		$\sum_{k \in \Naturals} \bE \bigl[ \rv{w}_{k} \, \one_{\{ \absval{ \rv{w}_{k}} \leq 1 \}} \bigr] < \infty$.
	\end{enumerate}
	First note that \ref{item:Convergence_Of_Random_Series_Criterion_1} implies $\gamma_{k}/\alpha_{k} \to 0$ as $k\to\infty$.
	Hence, $c \defeq \min_{k\in\Naturals} c_{k}$ is strictly positive, where
	\[
	c_{k}
	\defeq
	\int_{-\alpha_{k}/\gamma_{k}}^{\alpha_{k}/\gamma_{k}} \absval{y}^{p}\, \rho(y) \, \rd y.
	\]
	Since $\absval{ \rv{w}_{k} } < 1$ if and only if $\absval{ \rv{u}_{k} } < \tfrac{\alpha_{k}}{\gamma_{k}}$, it follows from \ref{item:Convergence_Of_Random_Series_Criterion_2} that
	\[
	\infty
	>
	\sum_{k \in \Naturals} \bE \bigl[ \rv{w}_{k} \, \one_{\{ \absval{ \rv{w}_{k} } \leq 1 \}} \bigr]
	\geq
	\sum_{k \in \Naturals} \int_{-\alpha_{k}/\gamma_{k}}^{\alpha_{k}/\gamma_{k}}
	\absval{\tfrac{\gamma_{k}}{\alpha_{k}} y}^{p} \, \rho(y)\, \rd y
	=
	\sum_{k \in \Naturals} c_{k} \, \absval{\tfrac{\gamma_{k}}{\alpha_{k}}}^{p}
	\geq
	c \sum_{k \in \Naturals} \absval{\tfrac{\gamma_{k}}{\alpha_{k}}}^{p},
	\]
	proving \ref{item:gamma_square_summable}.
	If condition \eqref{equ:Decay_Rate_Density} is fulfilled, then there exists $K \in \Naturals$ such that, for all $k \geq K$, $\tfrac{\alpha_{k}}{\gamma_{k}} \geq x_{0}$ and thereby
	\[
	\bP[ \absval{ \rv{w}_{k} } > 1]
	=
	\int_{1}^{\infty} \tfrac{\alpha_{k}}{\gamma_{k}} \, \rho(\tfrac{\alpha_{k}}{\gamma_{k}}y)\, \rd y
	=
	\int_{\alpha_{k}/\gamma_{k}}^{\infty} \rho(y) \, \rd y
	\geq
	C \absval{\tfrac{\gamma_{k}}{\alpha_{k}}}^{\tau}.
	\]
	Hence, condition \ref{item:Convergence_Of_Random_Series_Criterion_1} implies \ref{item:gamma_tau_summable}.
\end{proof}

\begin{proposition}
	\label{prop:embedding_of_ell_p_alpha_spaces}
	Let $p,q \in [1,\infty)$ and $\alpha,\gamma \in \Reals_{> 0}^{\Naturals}$.
	Then $\ell_{\gamma}^{q} \subseteq \ell_{\alpha}^{p}$,
	\begin{itemize}
		\item
		if $p<q$ and $\gamma \in \ell_{\alpha}^{\frac{qp}{q-p}}$ (in particular, if $p<q$ and $\gamma \in \ell_{\alpha}^{p} \subseteq \ell_{\alpha}^{\frac{qp}{q-p}}$); or
		\item
		if $p \geq q$ and $\gamma \in \ell_{\alpha}^{\infty}$.
	\end{itemize}
\end{proposition}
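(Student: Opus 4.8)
The plan is to reduce the statement to the standard (unweighted) inclusions between $\ell^{s}$ spaces together with a single application of Hölder's inequality. Given $x \in \ell_{\gamma}^{q}$, I would set $y_{k} \defeq x_{k}/\gamma_{k}$ and $\beta_{k} \defeq \gamma_{k}/\alpha_{k}$, so that $y = (y_{k})_{k \in \Naturals} \in \ell^{q}$ and the claim $x \in \ell_{\alpha}^{p}$ is precisely the assertion $(\beta_{k} y_{k})_{k \in \Naturals} \in \ell^{p}$. The key observation is that under this rewriting the hypothesis $\gamma \in \ell_{\alpha}^{r}$ becomes simply $\beta \in \ell^{r}$ (including the case $r = \infty$), so the whole proposition is equivalent to: $y \in \ell^{q}$ and $\beta \in \ell^{r}$, with $r$ as prescribed in each case, imply $(\beta_{k} y_{k})_{k} \in \ell^{p}$.

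For the case $p \geq q$, I would use $\beta \in \ell^{\infty}$, i.e.\ $M \defeq \sup_{k \in \Naturals} \beta_{k} < \infty$, together with the elementary inclusion $\ell^{q} \subseteq \ell^{p}$ (valid since $q \leq p$), which gives $y \in \ell^{p}$. Then $\sum_{k \in \Naturals} \absval{\beta_{k} y_{k}}^{p} \leq M^{p} \sum_{k \in \Naturals} \absval{y_{k}}^{p} < \infty$, finishing this case.

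For the case $p < q$, I would apply Hölder's inequality to the product $\sum_{k} \absval{y_{k}}^{p} \cdot \absval{\beta_{k}}^{p}$ with conjugate exponents $q/p$ and $q/(q-p)$, obtaining
\[
\sum_{k \in \Naturals} \absval{\beta_{k} y_{k}}^{p}
\leq
\Bigl( \sum_{k \in \Naturals} \absval{y_{k}}^{q} \Bigr)^{p/q}
\Bigl( \sum_{k \in \Naturals} \absval{\beta_{k}}^{\frac{pq}{q-p}} \Bigr)^{\frac{q-p}{q}} ,
\]
where the first factor is finite because $y \in \ell^{q}$ and the second is finite precisely because $\beta \in \ell^{pq/(q-p)}$, i.e.\ $\gamma \in \ell_{\alpha}^{pq/(q-p)}$. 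For the parenthetical strengthening I would note that $pq/(q-p) > p$ (since $q/(q-p) > 1$) and invoke the monotonicity $\ell_{\alpha}^{p} \subseteq \ell_{\alpha}^{pq/(q-p)}$, which follows from the unweighted inclusion $\ell^{p} \subseteq \ell^{pq/(q-p)}$ applied coordinatewise after dividing by $\alpha$; hence $\gamma \in \ell_{\alpha}^{p}$ already suffices.

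I do not anticipate a genuine obstacle: the argument is Hölder's inequality plus the standard nesting of $\ell^{s}$ spaces. The only points requiring minor care are the bookkeeping of the Hölder exponents and the translation of each weighted condition $\gamma \in \ell_{\alpha}^{r}$ into the unweighted condition $\beta \in \ell^{r}$; once this dictionary is fixed, both cases are immediate.
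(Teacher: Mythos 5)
Your proof is correct and follows essentially the same route as the paper: H\"{o}lder's inequality with conjugate exponents $q/p$ and $q/(q-p)$ for the case $p<q$, and a supremum bound on the weight ratio $\gamma_{k}/\alpha_{k}$ for the case $p\geq q$; your substitution $y_{k}=x_{k}/\gamma_{k}$, $\beta_{k}=\gamma_{k}/\alpha_{k}$ is only a cosmetic repackaging of the paper's direct computation. The one minor variation is that for $p\geq q$ the paper bounds $|x_{k}/\gamma_{k}|$ by its supremum and keeps the $q$-th power inside the sum, whereas you invoke the nesting $\ell^{q}\subseteq\ell^{p}$ --- the same estimate in disguise.
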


\begin{proof}
	Let $p<q$ and $h \in \ell_{\gamma}^{q}$.
	By H\"{o}lder's inequality,
	\[
		\sum_{k \in \Naturals} \Absval{\frac{h_{k}}{\alpha_{k}}}^{p}
		=
		\sum_{k \in \Naturals} \Absval{\frac{h_{k}}{\gamma_{k}}}^{p}\cdot \Absval{\frac{\gamma_{k}}{\alpha_{k}}}^{p}
		\leq
		\Norm{ \biggl( \Absval{\frac{h_{k}}{\gamma_{k}}}^{p} \biggr)_{k \in \Naturals} }_{\ell^{\frac{q}{p}}} \cdot
		\Norm{ \biggl( \Absval{\frac{\gamma_{k}}{\alpha_{k}}}^{p} \biggr)_{k \in \Naturals} }_{\ell^{\frac{q}{q-p}}}
		=
		\norm{h}_{\ell_{\gamma}^{q}}^{p} \cdot
		\norm{\gamma}_{\ell_{\alpha}^{\frac{qp}{q-p}}}^{p}
		<
		\infty.
	\]
	Now let $p \geq q$ and $h \in \ell_{\gamma}^{q}$.
	Then there exists some constant $M>0$ such that for all $k$, $\absval{h_{k}/\gamma_{k}} \leq M$.
	Hence,
	\[
		\sum_{k \in \Naturals} \Absval{\frac{h_{k}}{\alpha_{k}}}^{p}
		=
		\sum_{k \in \Naturals} \Absval{\frac{h_{k}}{\gamma_{k}}}^{q}\cdot \frac{\absval{h_{k}}^{p-q}\gamma_{k}^{q}}{\alpha_{k}^{p}}
		\leq
		M^{p-q} \sum_{k \in \Naturals} \Absval{\frac{h_{k}}{\gamma_{k}}}^{q}\cdot  \Absval{\frac{\gamma_{k}}{\alpha_{k}}}^{p}
		=
		\change{M^{p-q} \norm{h}_{\ell_{\gamma}^{q}}^{q} \cdot
		\norm{\gamma}_{\ell_{\alpha}^{\infty}}^{p}}
		<
		\infty.
	\]
\end{proof}

\begin{corollary}
	\label{cor:handy_inclusion_of_weighted_lp_spaces_general}
	Under \Cref{assump:basic_assumptions_for_product_measures} \ref{item:basic_assumption_product_full_measure}--\ref{item:basic_assumption_product_scaled_marginals}, $\ell_{\gamma}^{2} \subseteq \ell_{\alpha}^{p}$.
\end{corollary}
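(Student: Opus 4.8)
The plan is to deduce this immediately by feeding \Cref{lemma:Connection_between_scaling_sequences}\ref{item:gamma_square_summable} into \Cref{prop:embedding_of_ell_p_alpha_spaces} with the choice $q = 2$. First I would invoke \Cref{lemma:Connection_between_scaling_sequences}\ref{item:gamma_square_summable}, which under \Cref{assump:basic_assumptions_for_product_measures} \ref{item:basic_assumption_product_full_measure}--\ref{item:basic_assumption_product_scaled_marginals} tells us that $\gamma \in \ell_{\alpha}^{p}$. This is the only substantive input; the rest is exponent bookkeeping.

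Then I would split into the two cases of the dichotomy in \Cref{prop:embedding_of_ell_p_alpha_spaces}. If $p < 2$, the first bullet of \Cref{prop:embedding_of_ell_p_alpha_spaces} with $q = 2$ (so that $\tfrac{qp}{q-p} = \tfrac{2p}{2-p}$, which is well defined precisely because $2 - p > 0$) yields $\ell_{\gamma}^{2} \subseteq \ell_{\alpha}^{p}$ provided $\gamma \in \ell_{\alpha}^{2p/(2-p)}$; since $\tfrac{2p}{2-p} \geq p$, the standard nesting $\ell_{\alpha}^{p} \subseteq \ell_{\alpha}^{2p/(2-p)}$ of weighted sequence spaces together with $\gamma \in \ell_{\alpha}^{p}$ supplies exactly this — indeed it is the ``in particular'' clause already recorded in the statement of \Cref{prop:embedding_of_ell_p_alpha_spaces}. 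If instead $p \geq 2$, the second bullet of \Cref{prop:embedding_of_ell_p_alpha_spaces} with $q = 2$ yields $\ell_{\gamma}^{2} \subseteq \ell_{\alpha}^{p}$ provided $\gamma \in \ell_{\alpha}^{\infty}$, and $\gamma \in \ell_{\alpha}^{p}$ forces $(\gamma_{k}/\alpha_{k})_{k\in\Naturals} \in \ell^{p} \subseteq \ell^{\infty}$, i.e.\ $\gamma \in \ell_{\alpha}^{\infty}$. Either way the claimed inclusion follows, with the boundary case $p = 2$ routed through the second bullet.

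There is no real obstacle here: both ingredients are already available, so the proof is essentially a two-line corollary. The only points requiring a little care are ensuring $2 - p > 0$ before forming $\tfrac{2p}{2-p}$, and making sure $p = 2$ is handled by the $p \geq 2$ branch rather than the (then ill-defined) $p < 2$ branch.
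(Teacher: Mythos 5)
Your proposal is correct and follows essentially the same route as the paper: the paper's proof also invokes \Cref{lemma:Connection_between_scaling_sequences} to get $\gamma \in \ell_{\alpha}^{p} \subseteq \ell_{\alpha}^{\infty}$ and then applies the first and second alternatives of \Cref{prop:embedding_of_ell_p_alpha_spaces} with $q = 2$ in the cases $p < 2$ and $p \geq 2$, respectively. Your explicit exponent bookkeeping (the ``in particular'' clause for $p<2$ and routing $p=2$ through the second bullet) just spells out what the paper leaves implicit.
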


\begin{proof}
 Since $\gamma \in \ell_{\alpha}^{p} \subseteq \ell_{\alpha}^{\infty}$ by \Cref{lemma:Connection_between_scaling_sequences}, the claim follows directly by considering the first and second alternatives in \Cref{prop:embedding_of_ell_p_alpha_spaces} for the case where $p<2$ and $p\geq 2$ respectively.
\end{proof}

\subsection{Proof of \Cref{thm:OM_for_product_measures}}
\label{section:Proof_Theorem_OM_for_product_measures}

In this section we give the proof of \Cref{thm:OM_for_product_measures} which is technical and requires additional notation and lemmas:

\begin{definition}
	A non-negative function $f\colon \Reals^{d} \to \Reals_{\geq 0}$, $d\in\Naturals$, has the \defterm{symmetric decay property} if
	\begin{itemize}
		\item
		$d = 1$ and $f$ is symmetric, i.e.\ $f(x)=f(-x)$ for every $x \in \Reals$, and the restriction $f|_{\Reals_{\geq 0}}$ is monotonically decreasing;
		\item
		$d > 1$ and $f$ has the symmetric decay property ``along each coordinate'', i.e., for any $u \in \Reals^{d}$, the functions $f(\quark,u_{2},\dots,u_{d}),\, f(u_{1},\quark,u_{3},\dots,u_{d}), \dots,\, f(u_{1},\dots,u_{d-1},\quark)$ have the symmetric decay property.
	\end{itemize}
\end{definition}

\begin{lemma}
	\label{lemma:Decay_property_inheritance}
	Let $d\in\Naturals\setminus \{1\}$, let both $s\colon \Reals^{d-1} \to \Reals_{\geq 0}$ and $f\colon \Reals^{d} \to \Reals_{\geq 0}$ have the symmetric decay property and let $g\colon \Reals \to \Reals_{\geq 0}$.
	Then $h\colon \Reals^{d-1} \to \Reals_{\geq 0}$ also has the symmetric decay property, where
	\[
	h(u)
	\defeq
	\int_{-s(u)}^{s(u)} f(u,v) \, g(v) \, \rd v.
	\]
\end{lemma}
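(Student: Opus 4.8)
The plan is to verify the defining conditions of the symmetric decay property for $h$ directly. Since $d \geq 2$, we must show, for each coordinate index $j \in \{1,\dots,d-1\}$ and each fixed choice of the remaining entries of $u$, that the one-variable map $u_{j} \mapsto h(u)$ is symmetric and that its restriction to $\Reals_{\geq 0}$ is monotonically decreasing. (When $d-1 = 1$ this is literally the base clause of the definition; when $d-1 > 1$ it is the clause ``along each coordinate''.) So I fix $j$, freeze all entries of $u$ other than the $j$-th, and write $u^{t}$ for the point obtained from $u$ by setting its $j$-th coordinate to $t \in \Reals$.

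For symmetry: because $s$ has the symmetric decay property on $\Reals^{d-1}$, it is symmetric in its $j$-th argument, so $s(u^{-t}) = s(u^{t})$; because $f$ has the symmetric decay property on $\Reals^{d}$, it is symmetric in its $j$-th argument, so $f(u^{-t},v) = f(u^{t},v)$ for every $v \in \Reals$. Hence both the integrand $v \mapsto f(u^{t},v)\,g(v)$ and the limits $\pm s(u^{t})$ are unchanged under $t \mapsto -t$, so $h(u^{-t}) = h(u^{t})$. No hypothesis on $g$ beyond non-negativity enters here, nor below.

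For monotone decrease: let $0 \leq a < b$. The symmetric decay property of $s$ in its $j$-th argument gives $s(u^{a}) \geq s(u^{b}) \geq 0$, whence $[-s(u^{b}),s(u^{b})] \subseteq [-s(u^{a}),s(u^{a})]$; the symmetric decay property of $f$ in its $j$-th argument gives $f(u^{a},v) \geq f(u^{b},v)$ for every $v$. Since $f \geq 0$ and $g \geq 0$, this yields
\[
	h(u^{a})
	= \int_{-s(u^{a})}^{s(u^{a})} f(u^{a},v)\,g(v)\,\rd v
	\geq \int_{-s(u^{a})}^{s(u^{a})} f(u^{b},v)\,g(v)\,\rd v
	\geq \int_{-s(u^{b})}^{s(u^{b})} f(u^{b},v)\,g(v)\,\rd v
	= h(u^{b}),
\]
where the first inequality is monotonicity of the integral in its integrand and the second is monotonicity of the integral of a non-negative function under shrinking the domain of integration. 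This establishes the claim; all the estimates are valid in $[0,+\infty]$, so no finiteness of $h$ need be assumed.

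There is no genuine obstacle: the argument is elementary once the bookkeeping is pinned down. The only points requiring care are (i) recording which arguments carry the symmetric decay property — it is used in all $d-1$ arguments of $s$ and in the first $d-1$ arguments of $f$, while the last argument of $f$ (the one integrated out) and the factor $g$ need no such structure — and (ii) noting that the degenerate case $d = 2$, where ``symmetric decay along each coordinate'' collapses to the one-variable definition, is covered verbatim by the same symmetry and monotonicity computations.
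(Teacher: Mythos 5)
Your proof is correct and follows essentially the same route as the paper's: symmetry of $h$ in each coordinate from the symmetry of $s$ and $f$, and monotone decay from the nesting of the integration intervals (via the decay of $s$) combined with the pointwise decay of $f$ and non-negativity of the integrand. The only difference is presentational — you treat every coordinate and split the paper's single chained inequality into two explicit steps, where the paper argues along the first coordinate and says the rest is analogous.
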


\begin{proof}
	We will show that $h$ has the symmetric decay property along the first coordinate.
	The proofs for the other coordinates proceed analogously.
	For any $u = (u_{2},\dots,u_{d-1})\in\Reals^{d-2}$ and any $u_{1}, u_{1}' \in \Reals$ with $\absval{u_{1}} \leq \absval{u_{1}'}$, it holds that $s(u_{1},u) \geq s(u_{1}',u)$, and therefore
	\[
	h(u_{1},u)
	=
	\int_{-s(u_{1},u)}^{s(u_{1},u)} f(u_{1},u,v) \, g(v) \, \rd v
	\geq
	\int_{-s(u_{1}',u)}^{s(u_{1}',u)} f(u_{1}',u,v) \, g(v) \, \rd v
	=
	h(u_{1}',u).
	\]
	The symmetry of $h$ follows directly from the symmetry of $s$ and $f$.
\end{proof}

\begin{lemma}
	\label{lemma:Inequalities_Volumes_around_h_1_d}
	Let $s>0$ and $f,g\colon [-s,s] \to \Reals$ both have the symmetric decay property and $v \in \Reals$.
	Then
	\[
	\int_{-s}^{s} f(u+v)\, g(u)\, \rd u
	\leq
	\int_{-s}^{s} f(u)\, g(u)\, \rd u.
	\]
\end{lemma}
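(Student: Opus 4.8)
The plan is to use the layer-cake (``bathtub'') representation of $f$ and $g$ by their superlevel sets. First I would extend $f$ and $g$ to all of $\Reals$ by declaring them $0$ outside $[-s,s]$; since $f,g\geq 0$ and $f|_{\Reals_{\geq 0}}$, $g|_{\Reals_{\geq 0}}$ are monotonically decreasing, the extensions are still symmetric and non-increasing on $[0,\infty)$, and the two integrals in the statement are unchanged (the integrand is supported in $[-s,s]$ because $g$ is). One may also reduce to $v\geq 0$ via $u\mapsto -u$ and the symmetry of $f,g$, although the argument below handles all $v$ at once. For $t\geq 0$ set $A_{t}\defeq \set{u\in\Reals}{f(u)>t}$ and $B_{t}\defeq \set{u\in\Reals}{g(u)>t}$. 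Because $f$ is symmetric and non-increasing on $[0,\infty)$, each $A_{t}$ coincides, up to a Lebesgue-null set, with the centred interval $[-a_{t},a_{t}]$ where $a_{t}\defeq \sup\set{u\geq 0}{f(u)>t}\in[0,s]$ (with $\sup\emptyset\defeq 0$); likewise $B_{t}$ agrees up to a null set with $[-b_{t},b_{t}]$, $b_{t}\in[0,s]$.

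Writing $\lambda$ for Lebesgue measure, the layer-cake formula gives $f(u)=\int_{0}^{\infty}\one_{A_{t}}(u)\,\rd t$ and $g(u)=\int_{0}^{\infty}\one_{B_{\sigma}}(u)\,\rd\sigma$ pointwise, so by Tonelli's theorem (legitimate since all integrands are non-negative, so no integrability of $f$ or $g$ is needed)
\[
	\int_{-s}^{s} f(u+v)\,g(u)\,\rd u
	=
	\int_{0}^{\infty}\!\!\int_{0}^{\infty} \lambda\bigl((A_{t}-v)\cap B_{\sigma}\bigr)\,\rd\sigma\,\rd t,
\]
and the same identity with $v=0$ evaluates the right-hand side of the lemma. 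For fixed $t,\sigma$ the set $(A_{t}-v)\cap B_{\sigma}$ is contained in both $A_{t}-v$ and $B_{\sigma}$, hence
\[
	\lambda\bigl((A_{t}-v)\cap B_{\sigma}\bigr)
	\leq
	\min\{\lambda(A_{t}-v),\lambda(B_{\sigma})\}
	=
	\min\{2a_{t},2b_{\sigma}\}
	=
	\lambda(A_{t}\cap B_{\sigma}),
\]
where the last equality holds because $A_{t}$ and $B_{\sigma}$ are, modulo null sets, concentric centred intervals, so one is contained in the other. Integrating this pointwise inequality over $(t,\sigma)\in(0,\infty)^{2}$ yields the claim.

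The routine parts are the zero-extension and the Tonelli bookkeeping (and, if one prefers, a trivial preliminary reduction to the case where the right-hand side is finite, since otherwise the inequality is vacuous). The single place where the symmetric decay property is genuinely used — beyond mere non-negativity, which is all the layer-cake formula needs — is the identification of the superlevel sets $A_{t},B_{\sigma}$ with concentric centred intervals up to null sets, which is exactly what upgrades the trivial bound $\lambda(A_{t}\cap B_{\sigma})\leq\min\{\lambda(A_{t}),\lambda(B_{\sigma})\}$ to an equality and thereby drives the whole estimate; I expect stating that measure-theoretic identification carefully to be the main (and only mildly delicate) point. An alternative route substitutes $w=u+v$ to rewrite the left side as $\int f(w)\,g(w-v)\,\rd w$ and then invokes unimodality of the convolution $f\ast g$, but this is essentially equivalent in difficulty, so the layer-cake argument seems the cleanest.
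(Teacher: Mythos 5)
Your layer-cake argument is sound and takes a genuinely different route from the paper. The paper's proof is elementary and direct: by symmetry it reduces to $v \geq 0$, splits into the cases $v \in [0,2s]$ and $v > 2s$, and uses the reflection $u \mapsto -u-v$ on $[-s,-v/2]$ together with monotonicity of $f$ and $g$ to show $\int_{-s}^{s}\bigl(f(u+v)-f(u)\bigr)g(u)\,\rd u \leq 0$; no level sets or product-measure bookkeeping appear. You instead write $f$ and $g$ as superpositions of indicators of superlevel sets, identify those sets (up to null sets) as concentric centred intervals --- the one place the symmetric decay property enters --- and reduce everything to $\lambda\bigl((A_{t}-v)\cap B_{\sigma}\bigr) \leq \min\{\lambda(A_{t}),\lambda(B_{\sigma})\} = \lambda(A_{t}\cap B_{\sigma})$, integrated via Tonelli. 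This is an Anderson/rearrangement-type argument: more conceptual, and it generalises readily (e.g.\ to symmetric unimodal functions on $\Reals^{d}$), whereas the paper's computation is shorter, needs no identification of level sets, and only uses the pointwise bound $f(u+v)\leq f(s)\leq f(u)$ on $[s-v,s]$.

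One point you should fix: do not truncate $f$ to $[-s,s]$. In the paper the lemma is applied (in \Cref{lemma:Inequalities_Volumes_around_h}) with $f=\rho$ a strictly positive density on all of $\Reals$, so the left-hand side genuinely involves values $f(u+v)$ with $u+v\notin[-s,s]$; replacing $f$ by its zero extension strictly decreases that side and proves a weaker inequality than the one being invoked. (The paper's proof is insensitive to this, since it only needs $f(u+v)\leq f(s)$ for $u+v\geq s$, which holds under either convention.) The remedy in your framework is immediate: extend only $g$ by zero --- that is all that is needed to pass from integrals over $[-s,s]$ to integrals over $\Reals$ --- keep $f$ with its symmetric decay on $\Reals$, and observe that each $A_{t}$ is still, up to a null set, a centred interval (possibly of infinite length), so the chain $\lambda\bigl((A_{t}-v)\cap B_{\sigma}\bigr)\leq\min\{\lambda(A_{t}),\lambda(B_{\sigma})\}=\lambda(A_{t}\cap B_{\sigma})$ is unchanged. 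With that one adjustment your proof covers exactly the setting in which the lemma is used.
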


\begin{proof}
	Due to symmetry, we only need to consider $v \geq 0$, and we split this into two cases, according to whether or not $v \leq 2 s$.

	We first consider the case that $v \in [0,2s]$.
	First note that $g(u+v) \leq g(u)$ for any $u \in [-\tfrac{v}{2} , s-v]$.
	For $u\geq 0$, this follows from the symmetric decay property.
	For $u \in [-\tfrac{v}{2} , 0]$, it holds that $u + v \geq \tfrac{v}{2}$, and thus $g(u+v) \leq g(\tfrac{v}{2}) = g(-\tfrac{v}{2}) \leq g(u)$.
	Using the transformation $u \mapsto -u-v$ we obtain
	\begin{align*}
	\int_{-s}^{-v/2} (f(u+v)-f(u))\, g(u)\, \rd u
	&=
	\int_{-v/2}^{s-v} (f(-u)-f(-u-v))\, g(-u-v)\, \rd u
	\\
	&=
	\int_{-v/2}^{s-v} (f(u)-f(u+v))\, g(u+v)\, \rd u
	\\
	&\leq
	-\int_{-v/2}^{s-v} (f(u+v)-f(u))\, g(u)\, \rd u.
	\end{align*}
	Further, for any $u \in [s-v,s]$, $u+v \geq s$, and thus $f(u+v) \leq f(s) \leq f(u)$.
	Therefore,
	{\small
		\begin{align*}
			& \int_{-s}^{s} (f(u+v)-f(u))\, g(u)\, \rd u
			\\
			& \quad =
			\underbrace{\int_{-s}^{-v/2} (f(u+v)-f(u))\, g(u)\, \rd u
				+
				\int_{-v/2}^{s-v} (f(u+v)-f(u))\, g(u)\, \rd u}_{\leq\, 0}
			+
			\int_{s-v}^{s} \underbrace{(f(u+v)-f(u))}_{\leq\, 0}\, \underbrace{g(u)}_{\geq\, 0}\, \rd u
			\\
			& \quad \leq
			0.
		\end{align*}
	}

	Secondly, we consider the case that $v > 2s$.
	For any $u \in [-s,s]$, $u+v > s$ and thus $f(u+v) \leq f(s) \leq f(u)$.
	Therefore,
	\[
		\int_{-s}^{s} \underbrace{(f(u+v)-f(u))}_{\leq\, 0}\, \underbrace{g(u)}_{\geq\, 0}\, \rd u
		\leq
		0.
	\]
\end{proof}

\begin{lemma}
	\label{lemma:Perturbation_by_shifted_rho}
	Under \Cref{assump:basic_assumptions_for_product_measures} \ref{item:basic_assumption_product_mu_0} and \ref{item:basic_assumption_product_integrable_second_derivative},
	there exists $M>0$ such that, for any $s > 0$, any $\Lambda\colon \Reals \to \Reals_{\geq 0}$ with the symmetric decay property, and any $v \in \Reals$ with $\absval{v}\leq 1$,
	\begin{enumerate}[label = (\alph*)]
		\item
		\label{item:Perturbation_shifted_second_derivative}
		$\displaystyle
		\Absval{\int_{-s}^{s} \rho''(u+v)\, \Lambda(u) \, \rd u}
		\leq
		M \int_{-s}^{s} \rho(u)\, \Lambda(u) \, \rd u;
		$
		\item
		\label{item:Perturbation_by_shifted_rho}
		there exists $\zeta = \zeta(s,\Lambda,v) \in [-\frac{M}{2},\frac{M}{2}]$ such that
		\[
		\int_{-s}^{s} \rho(u+v)\, \Lambda(u) \, \rd u
		=
		\bigl(1 + \zeta \, v^{2}\bigr)
		\int_{-s}^{s} \rho(u)\, \Lambda(u) \, \rd u.
		\]
	\end{enumerate}
\end{lemma}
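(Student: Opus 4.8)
The two parts are closely linked, and I would prove \ref{item:Perturbation_shifted_second_derivative} first and then deduce \ref{item:Perturbation_by_shifted_rho} from it by a second-order Taylor expansion. For \ref{item:Perturbation_shifted_second_derivative}, the idea is to reduce to the case of an indicator weight by a layer-cake decomposition. Since $\Lambda$ has the symmetric decay property, each superlevel set $\{u\in[-s,s]\mid\Lambda(u)>t\}$ is a symmetric subinterval of $[-s,s]$, i.e.\ of the form $[-a_{t},a_{t}]$ (up to endpoints), so writing $\Lambda(u)=\int_{0}^{\infty}\one_{[-a_{t},a_{t}]}(u)\,\rd t$ and applying Tonelli's theorem --- legitimate because $\rho''$ is continuous, hence bounded on $[-s-1,s+1]$, and $\Lambda$ is bounded on $[-s,s]$ --- it suffices to find $M>0$, independent of $a$ and $v$, with
\[
\Absval{\int_{-a}^{a}\rho''(u+v)\,\rd u}\leq M\int_{-a}^{a}\rho(u)\,\rd u
\qquad\text{for all }a>0\text{ and all }\absval{v}\leq1 .
\]
(The restriction $a\leq s$ is then irrelevant.)

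To prove this reduced inequality I would use the antiderivative identity $\int_{-a}^{a}\rho''(u+v)\,\rd u=\rho'(a+v)-\rho'(v-a)$ and split on the size of $a$ relative to a threshold $a_{0}\in(0,1]$ chosen so that $\rho(a_{0})>0$ (possible because $\rho(0)>0$ --- otherwise $\rho\equiv0$ by \ref{item:basic_assumption_product_mu_0} --- and $\rho$ is continuous). For $a\leq a_{0}$ one has, since $\absval{u+v}\leq2$ on the domain of integration, $\Absval{\int_{-a}^{a}\rho''(u+v)\,\rd u}\leq2a\max_{[-2,2]}\absval{\rho''}$, while $\int_{-a}^{a}\rho\geq2a\,\rho(a)\geq2a\,\rho(a_{0})$ by monotonicity; hence the ratio is bounded by $\max_{[-2,2]}\absval{\rho''}/\rho(a_{0})$. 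For $a\geq a_{0}$ I would use that $\rho'$ is globally bounded by $\int_{\Reals}\absval{\rho''}$: indeed $\rho''\in L^{1}(\Reals)$ by \ref{item:basic_assumption_product_integrable_second_derivative} forces $\rho'$ to have limits at $\pm\infty$, and these limits must be $0$ because $\rho\geq0$ is bounded and monotone on $[0,\infty)$ by \ref{item:basic_assumption_product_mu_0}, so $\rho'(w)=\int_{-\infty}^{w}\rho''$ and $\absval{\rho'(w)}\leq\int_{\Reals}\absval{\rho''}$ for every $w$. Thus the left-hand side is at most $2\int_{\Reals}\absval{\rho''}$, while $\int_{-a}^{a}\rho\geq\int_{-a_{0}}^{a_{0}}\rho>0$. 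Taking $M$ to be the larger of the two resulting constants completes \ref{item:Perturbation_shifted_second_derivative}.

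For \ref{item:Perturbation_by_shifted_rho}, I would use the integral form of Taylor's theorem, valid since $\rho\in C^{2}$:
\[
\rho(u+v)=\rho(u)+v\,\rho'(u)+v^{2}\int_{0}^{1}(1-\tau)\,\rho''(u+\tau v)\,\rd\tau .
\]
Multiplying by $\Lambda(u)$ and integrating over $[-s,s]$, the first-order term vanishes because $\rho'$ is odd ($\rho$ being even) and $\Lambda$ is even, so $u\mapsto\rho'(u)\Lambda(u)$ is odd and integrable on $[-s,s]$. Swapping the $u$- and $\tau$-integrals by Fubini's theorem, the remainder equals $v^{2}\int_{0}^{1}(1-\tau)\bigl(\int_{-s}^{s}\rho''(u+\tau v)\Lambda(u)\,\rd u\bigr)\rd\tau$, and since $\absval{\tau v}\leq1$ for $\tau\in[0,1]$, part \ref{item:Perturbation_shifted_second_derivative} bounds the inner integral in absolute value by $M\,J$, where $J\defeq\int_{-s}^{s}\rho(u)\Lambda(u)\,\rd u$; using $\int_{0}^{1}(1-\tau)\,\rd\tau=\tfrac12$, the whole remainder is at most $\tfrac{M}{2}v^{2}J$ in absolute value. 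One then sets $\zeta\defeq0$ when $v=0$ or when $J=0$ --- in the latter case $\Lambda=0$ Lebesgue-a.e.\ (because $\rho>0$ near the origin and $\Lambda$ is monotone on $[0,\infty)$), so both sides of the asserted identity vanish --- and $\zeta\defeq\bigl(\int_{-s}^{s}\rho(u+v)\Lambda(u)\,\rd u-J\bigr)/(v^{2}J)$ otherwise; this gives $\absval{\zeta}\leq M/2$ and the claimed identity. As a consistency check, \Cref{lemma:Inequalities_Volumes_around_h_1_d} applied with $f=\rho$ and $g=\Lambda$ already yields $\zeta\leq0$.

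The crux of the argument is the reduced inequality in \ref{item:Perturbation_shifted_second_derivative} with a constant $M$ that is uniform in both $a$ and $v$: the small-$a$ regime relies on a local sup bound on $\rho''$ together with the elementary estimate $\int_{-a}^{a}\rho\geq2a\,\rho(a)$, and the large-$a$ regime relies on the decay $\rho'(w)\to0$ (hence on $\rho''\in L^{1}$) together with a strictly positive lower bound on $\int_{-a}^{a}\rho$; matching the two regimes at a suitable threshold $a_{0}$ is the only delicate point. The layer-cake reduction and the passage from \ref{item:Perturbation_shifted_second_derivative} to \ref{item:Perturbation_by_shifted_rho} are routine.
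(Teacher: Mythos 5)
Your proof is correct, but it takes a genuinely different route from the paper for part \ref{item:Perturbation_shifted_second_derivative}. The paper never reduces to indicator weights: it fixes once and for all an interval $S_{\ast}=[-s_{\ast},s_{\ast}]$ depending only on $\rho$, chosen so that $\int_{S_{\ast}}\rho\geq\tfrac12$ and $\int_{\Reals\setminus[-s_{\ast}-1,s_{\ast}+1]}\absval{\rho''}\leq\tfrac12$, bounds the contribution of $[-s,s]\cap S_{\ast}$ by a uniform ratio bound $\absval{\rho''(u_{1})/\rho(u_{2})}\leq M-1$ on the compact set $[-s_{\ast}-1,s_{\ast}+1]$, and controls the tail $[-s,s]\setminus S_{\ast}$ directly through the decay of $\Lambda$ (via $\Lambda(u)\leq\Lambda(s_{\ast})\leq\Lambda(u')$ for $\absval{u}\geq s_{\ast}\geq\absval{u'}$), so the symmetric decay property enters through these pointwise comparisons rather than through the structure of superlevel sets. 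Your layer-cake reduction plus the antiderivative identity and the two-regime split in the half-length $a$ (local sup bound on $\rho''$ and $\int_{-a}^{a}\rho\geq 2a\rho(a_{0})$ for small $a$; $\sup\absval{\rho'}\leq\norm{\rho''}_{L^{1}}$ and a fixed positive lower bound on $\int_{-a}^{a}\rho$ for large $a$) achieves the same uniformity in $s$, $\Lambda$, $v$, and has the merit of isolating exactly where the decay property is used (symmetric-interval superlevel sets) — at the cost of the Fubini bookkeeping (strictly, you invoke Fubini rather than Tonelli for the sign-changing $\rho''$, and the absolute-integrability justification you give is the right one). For part \ref{item:Perturbation_by_shifted_rho} the two arguments are essentially the same idea: the paper applies Lagrange-form Taylor to $F_{s}(t)\defeq\int_{-s}^{s}\rho(u+t)\Lambda(u)\,\rd u$, using $F_{s}'(0)=0$ by symmetry and $\absval{F_{s}''}\leq MF_{s}(0)$ from part \ref{item:Perturbation_shifted_second_derivative} (which requires differentiating under the integral sign), whereas you expand $\rho$ pointwise with the integral remainder and then integrate against $\Lambda$, killing the first-order term by odd symmetry; your handling of the degenerate cases $v=0$ and $J=0$ is careful and correct, and your uniform constant $M$ depends only on $\rho$, as required (it can always be enlarged to exceed $1$, matching the paper's later use of $M>1$).
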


\begin{proof}
	Since $\rho$ is a probability density and $\rho'' \in L^{1}(\Reals)$ by \ref{item:basic_assumption_product_integrable_second_derivative}, we can choose $s_{\ast} > 0$ such that
	\[
	\int_{S_{\ast}} \rho(u)\, \rd u
	\geq
	\frac{1}{2},
	\qquad
	\int_{\Reals \setminus \tilde{S}_{\ast}} \absval{\rho''(u)}\, \rd u
	\leq
	\frac{1}{2},
	\]
	where $S_{\ast} \defeq [-s_{\ast},s_{\ast}]$ and $\tilde{S}_{\ast} \defeq [-s_{\ast}-1,s_{\ast}+1]$.
	Hence, for any  $v \in \Reals$ with $\absval{v}\leq 1$, it follows that $\int_{\Reals \setminus S_{\ast}} \absval{\rho''(u+v)}\, \rd u \leq \frac{1}{2}$.
	Since $\tilde{S}_{\ast}$ is compact, $\rho$ and $\rho''$ are continuous and $\rho$ is strictly positive by \Cref{assump:basic_assumptions_for_product_measures} \ref{item:basic_assumption_product_mu_0}, there exists $M > 1$ such that, for any $u_{1},u_{2} \in \tilde{S}_{\ast}$,
	\[
	\Absval{\frac{\rho''(u_{1})}{\rho(u_{2})}} \leq M-1.
	\]
	Now let $s > 0$, $S\defeq [-s,s]$, $\Lambda\colon \Reals \to \Reals_{\geq 0}$ be any function with the symmetric decay property and $v \in \Reals$ with $\absval{v}\leq 1$.
	By the mean value theorem for definite integrals, there exists for any closed interval $A\subseteq S_{\ast}$ some $u_{A} = u_{A}(\Lambda,v) \in S_{\ast}$ such that
	\begin{equation}
	\label{equ:Technical_perturbation_second_derivative_inside}
	\Absval{\int_{A} \rho''(u+v)\, \Lambda(u) \, \rd u}
	\leq
	\Absval{\frac{\rho''(u_{A}+v)}{\rho(u_{A})}}
	\int_{A} \rho(u)\, \Lambda(u) \, \rd u
	\leq
	(M-1) \int_{A} \rho(u)\, \Lambda(u) \, \rd u.
	\end{equation}
	If $s\leq s_{\ast}$, then $S \subseteq S_{\ast}$ and the proof of \ref{item:Perturbation_shifted_second_derivative} is finished.
	Otherwise, since $\Lambda$ has the symmetric decay property,
	\begin{align}
	\begin{split}
	\label{equ:Technical_perturbation_second_derivative_outside}
	\Absval{\int_{S \setminus S_{\ast}} \rho''(u+v)\, \Lambda(u) \, \rd u}
	&\leq
	\Lambda(s_{\ast}) \int_{S \setminus S_{\ast}} \absval{\rho''(u+v)}\, \rd u
	\leq
	\Lambda(s_{\ast}) \frac{1}{2}
	\leq
	\Lambda(s_{\ast}) \int_{S_{\ast}} \rho(u)\, \rd u
	\\
	&\leq
	\int_{S_{\ast}} \rho(u)\, \Lambda(u) \, \rd u.
	\end{split}
	\end{align}
	Hence, combining \eqref{equ:Technical_perturbation_second_derivative_inside} and \eqref{equ:Technical_perturbation_second_derivative_outside} and using $S_{\ast} \subset S$,
	\begin{align*}
	\Absval{\int_{S} \rho''(u+v)\, \Lambda(u) \, \rd u}
	& \leq
	\Absval{\int_{S_{\ast}} \rho''(u+v)\, \Lambda(u) \, \rd u}
	+
	\Absval{\int_{S \setminus S_{\ast}} \rho''(u+v)\, \Lambda(u) \, \rd u} \\
	& \leq
	M \int_{S} \rho(u)\, \Lambda(u) \, \rd u,
	\end{align*}
	proving \ref{item:Perturbation_shifted_second_derivative}.
	Now let $F_{s}(t) \defeq \int_{-s}^{s} \rho(u+t)\, \Lambda(u)\, \rd u$.
	Since $\rho \in C^{2}(\Reals)$,
	\[
	F_{s}'(t) \defeq \int_{-s}^{s} \rho'(u+t)\, \Lambda(u)\, \rd u,
	\qquad
	F_{s}''(t) \defeq \int_{-s}^{s} \rho''(u+t)\, \Lambda(u)\, \rd u.
	\]
	By symmetry of $\rho$ and $\Lambda$, $F_{s}'(0) = 0$ and, if $\absval{t}\leq 1$,
	\ref{item:Perturbation_shifted_second_derivative} implies
	\[
	\absval{F_{s}''(t)}
	\leq
	M
	\int_{-s}^{s} \rho(u)\, \Lambda(u)\, \rd u
	=
	M F_{s}(0).
	\]
	Hence, by Taylor's theorem, there exists $\xi \in [-v,v] \subseteq [-1,1]$ and $\zeta = \zeta(s,\Lambda,v) \in [-\frac{M}{2},\frac{M}{2}]$ such that
	\[
	F_{s}(v)
	=
	F_{s}(0) + F_{s}'(0)\, v + F_{s}''(\xi)\, \frac{v^{2}}{2}
	=
	F_{s}(0) \, ( 1 + \zeta\, v^{2} ),
	\]
	proving \ref{item:Perturbation_by_shifted_rho}.
\end{proof}

\begin{lemma}
	\label{lem:Perturbation_by_shifts_for_Besov}
	For $p\in [1,2]$, $s > 0$, any symmetric function $\Lambda\colon \Reals \to \Reals_{\geq 0}$ and any $v \in \Reals$,
	\[
	\int^{s}_{-s} e^{-|u+v|^p}\, \Lambda(u)\, \rd u
	\geq
	e^{-|v|^p}\int^{s}_{-s} e^{-|u|^p}\, \Lambda(u)\, \rd u.
	\]
\end{lemma}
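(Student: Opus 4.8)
The plan is to symmetrise the integral using the symmetry of $\Lambda$, apply the arithmetic--geometric mean inequality to the resulting pair of exponentials, and then reduce everything to the scalar inequality $|u+v|^{p} + |u-v|^{p} \leq 2(|u|^{p} + |v|^{p})$, which holds for every $p \in [0,2]$.

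First I would observe that, since $\Lambda$ is symmetric and the interval $[-s,s]$ is symmetric about $0$, the substitution $u \mapsto -u$ yields
\[
\int_{-s}^{s} e^{-|u+v|^{p}}\, \Lambda(u) \, \rd u = \int_{-s}^{s} e^{-|u-v|^{p}}\, \Lambda(u) \, \rd u ,
\]
so that the left-hand side of the claimed inequality equals $\tfrac{1}{2} \int_{-s}^{s} \bigl( e^{-|u+v|^{p}} + e^{-|u-v|^{p}} \bigr) \Lambda(u) \, \rd u$. By the arithmetic--geometric mean inequality, $e^{-|u+v|^{p}} + e^{-|u-v|^{p}} \geq 2 \exp\bigl( -\tfrac{1}{2}(|u+v|^{p} + |u-v|^{p}) \bigr)$ pointwise in $u$. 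Hence it suffices to prove $\tfrac{1}{2}(|u+v|^{p} + |u-v|^{p}) \leq |u|^{p} + |v|^{p}$ for all $u,v \in \Reals$; granting this, the symmetrised integrand dominates $e^{-|v|^{p}} e^{-|u|^{p}}\, \Lambda(u)$ pointwise, and integrating this inequality over $[-s,s]$ (which is legitimate in $[0,\infty]$, so no integrability assumption on $\Lambda$ is needed) gives the assertion.

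For the scalar inequality I would invoke the parallelogram law $|u+v|^{2} + |u-v|^{2} = 2(|u|^{2} + |v|^{2})$ together with two properties of $t \mapsto t^{q}$ on $\Reals_{\geq 0}$ for $q \defeq p/2 \in [0,1]$: concavity, which gives $a^{q} + b^{q} \leq 2\bigl( \tfrac{a+b}{2} \bigr)^{q}$ for $a,b \geq 0$, and subadditivity, which gives $(c+d)^{q} \leq c^{q} + d^{q}$. Taking $a = |u+v|^{2}$, $b = |u-v|^{2}$ and then $c = |u|^{2}$, $d = |v|^{2}$ yields
\[
|u+v|^{p} + |u-v|^{p} \leq 2\bigl( |u|^{2} + |v|^{2} \bigr)^{q} \leq 2\bigl( |u|^{p} + |v|^{p} \bigr),
\]
as required.

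The argument is essentially routine; the only point requiring a little care is the scalar inequality, and there the decisive observation is that $p \leq 2$ makes $t \mapsto t^{p/2}$ simultaneously concave and subadditive, which is exactly what turns the parallelogram identity into the desired one-sided estimate. I do not anticipate any obstacle beyond this.
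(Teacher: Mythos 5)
Your proof is correct and follows the same overall route as the paper's: symmetrise the integral using the evenness of $\Lambda$, establish the pointwise bound $e^{-|u+v|^{p}} + e^{-|u-v|^{p}} \geq 2\,e^{-|u|^{p}-|v|^{p}}$, and integrate against $\Lambda \geq 0$. The only differences are local: the paper obtains the key scalar inequality $|u+v|^{p} + |u-v|^{p} \leq 2(|u|^{p}+|v|^{p})$ by citing Clarkson's inequality for $1<p\leq 2$ (with $p=1$ handled separately by the triangle inequality) and then passes to the exponential bound via $e^{x}\geq 1+x$, whereas you prove the scalar inequality elementarily from the parallelogram identity combined with the concavity and subadditivity of $t\mapsto t^{p/2}$ --- which treats all $p\in[1,2]$ uniformly and keeps the argument self-contained --- and you use the AM--GM inequality for the exponential step; both variants are equally valid.
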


\begin{proof}
If $1 < p \leq 2$, then \citet[Theorem 2]{Clarkson} yields, for any $x,y \in \Reals$,
\[
\absval{x+y}^{p} + \absval{x-y}^{p}
\geq
2^{p-1}(\absval{x}^{p} + \absval{y}^{p}).
\]
Using the transformation $x = u+v$, $y = u-v$ proves
\begin{equation}
	\label{eq:parallelogram_law}
	2(\absval{u}^{p} + \absval{v}^{p}) \geq \absval{u+v}^{p} + \absval{u-v}^{p}
\end{equation}
for any $u,v\in \Reals$, whenever $1 < p \leq 2$, while for $p=1$ the inequality 	\eqref{eq:parallelogram_law} follows directly from the triangle inequality.
Using the inequality $e^{x} \geq 1+x$, $x\in\Reals$, it follows that
\[
e^{-|u+v|^p + |u|^p + |v|^p} + e^{-|u-v|^p + |u|^p + |v|^p}
\geq
2 - |u+v|^p - |u-v|^p + 2 |u|^p + 2 |v|^p
\geq
2
\]
and therefore
\[
e^{-|u+v|^p} + e^{-|u-v|^p} \geq 2 e^{- |u|^p - |v|^p}.
\]
Since $\Lambda\colon \Reals \to \Reals_{\geq 0}$ is even and non-negative, we obtain
\begin{align*}
	\int^{s}_{-s} e^{-|u+v|^p}\, \Lambda(u)\, \rd u
	&=
	\int^{s}_{0} \bigl(e^{-|u+v|^p} + e^{-|u-v|^p}\bigr) \, \Lambda(u)\, \rd u
	\\
	&\geq
	\int^{s}_{0} 2 e^{- |v|^p - |u|^p} \, \Lambda(u)\, \rd u
	\\
	&=
	e^{-|v|^p} \int^{s}_{-s} e^{-|u|^p} \, \Lambda(u)\, \rd u.
\end{align*}
\end{proof}

\change{
\begin{notation}
	\label{notation:OM_for_product_measures}
	Under \Cref{assump:basic_assumptions_for_product_measures} \ref{item:basic_assumption_product_full_measure}--\ref{item:basic_assumption_product_scaled_marginals}, we introduce the following notation for any $r > 0$ and any $a,b \in \Naturals$:
	\begin{itemize}
		\item
		$\displaystyle [a\mathord{:}b]
		\defeq
		\{ a,\dots,b \}$.
		\item
		For $x \in \Reals^{\Naturals}$ define $x_{[a:b]} \defeq (x_{i})_{i\in [a:b]}$.
		\item
		$\displaystyle
		\cylBall{x}{r}{[a:b]}
		\defeq
		\biggl\{ y\in\Reals^{[a:b]} \ \bigg| \ \norm{y-x}_{\ell_{\alpha}^{p}([a:b])} = \Big(\sum_{k\in [a:b]} \absval{\alpha_{k}^{-1}(y_{k}-x_{k})}^{p}\Big)^{1/p} < r  \biggr\}$ for $x\in \Reals^{[a:b]}$.
		
		\item
		$\displaystyle
		\cylBall{x|z}{r}{[1:a]}
		\defeq
		\cylBall{x}{r(z)}{[1:a]}$,
		where
		$r(z)
		\defeq
		\Bigl( r^{p} - \norm{z}_{\ell_{\alpha}^{p}([a+1:b])}^{p} \Bigr)^{1/p}$, for $x\in \Reals^{[1:a]}$ and $z \in \cylBall{0}{r}{[a+1:b]}$.

		\item
		$\lambda_{[a:b]}$ denotes the Lebesgue measure on $\Reals^{[a:b]}$.
		\item
		$\mu^{[a:b]} = \bigotimes_{k \in [a:b]} \mu_{k}$ is the probability measure on $(\Reals^{[a:b]},\Borel{\Reals^{[a:b]}})$ given by the density
		\[
		\rho^{[a:b]}(x)
		\defeq
		\prod_{k \in [a:b]} \gamma_{k}^{-1}\, \rho(\gamma_{k}^{-1}x_{k}),
		\qquad
		x\in\Reals^{[a:b]}.
		\]

		\item
		Let $\logden$, $\logden_{\gamma,m}$ and $E_{\gamma,m}$ be defined as in \Cref{def:logdensity}.
		Recall that $\logden$ is continuous and $\logden(0) = 0$.
		Thus, for any $\varepsilon > 0$ and $u \in \Reals$, there exists $\delta_{u}(\varepsilon) > 0$ such that
		\[
		\absval{v} < \delta_{u}(\varepsilon)
		\ \implies\
		\absval{\logden(u+v) - \logden(u)} \leq \varepsilon,
		\
		\absval{\logden(v)} \leq \varepsilon.
		\]
		\item
		$ \displaystyle
		V_{r}(h,a,b)
		\defeq
		\int_{\cylBall{0}{r}{[a+1:b]}} \rho^{[a+1:b]} (u + h_{[a+1:b]})
		\, \lambda_{[1:a]}(\cylBall{0|u}{r}{[1:a]})
		\, \rd u,
		\qquad
		h \in X.$
		\item
		$\displaystyle \gamma_{[a:b]} \odot u \defeq (\gamma_{k}u_{k})_{k\in[a:b]},
		\quad
		\gamma_{[a:b]}^{-1} \odot A \defeq \Set{u \in \Reals^{[a:b]}}{\gamma_{[a:b]} \odot u \in A},
		\quad
		u \in \Reals^{[a:b]},\, A \subseteq \Reals^{[a:b]};$

		\item
		For $u \in \gamma_{[a+1:b]}^{-1} \odot \cylBall{0}{r}{[a+1:b]}$, we define
		\begin{align*}
		s_{r}^{[a+1:k]}(u_{[a+1:k]})
		&\defeq
		\frac{\alpha_{k+1}}{\gamma_{k+1}} \biggl( r^{p} - \sum_{j=a+1}^{k} \Absval{\frac{\gamma_{j}u_{j}}{\alpha_{j}}}^{p} \biggr)^{1/p},
		&&
		a \leq k < b ,
		\\
		\Lambda_{b,r}^{[a+1:b]}(u_{[a+1:b]})
		&\defeq
		\lambda_{[1:a]}\bigl(\cylBall{0|\gamma_{[a+1:b]} \odot u}{r}{[1:a]}\bigr),
		\\
		\Lambda_{b,r}^{[a+1:k]}(u_{[a+1:k]})
		&\defeq
		\int_{-s_{r}^{[a+1:k]}(u_{[a+1:k]})}^{s_{r}^{[a+1:k]}(u_{[a+1:k]})}
		\rho(u_{k+1}) \Lambda_{b,r}^{[a+1:k+1]}(u_{[a+1:k+1]}) \,
		\rd u_{k+1},
		&& a \leq k < b.
		\end{align*}
		For $k = a$, we use the convention that the empty sum in the parentheses is zero.
		Hence, we define $s_{r}^{a} \defeq s_{r}^{[a+1:a]}(u_{[a+1:a]}) \defeq \frac{\alpha_{a+1}}{\gamma_{a+1}}\, r$ in this case.			
	\end{itemize}
\end{notation}
}

\begin{lemma}
	\label{lemma:Symmetric_decay_property_Lambda}
	For any $a \leq k < b$ and $u \in \gamma_{[a+1:b]}^{-1} \odot \cylBall{0}{r}{[a+1:b]}$,
	the functions $s_{r}^{[a+1:k]}$ and $\Lambda_{b,r}^{[a+1:k+1]}$ satisfy the symmetric decay property, where we extend them to the corresponding Euclidean space by setting them to zero outside their domain.
	Further, $\Lambda_{b,r}^{[a+1:a]} = V_{r}(0,a,b)$.
\end{lemma}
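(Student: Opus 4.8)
The plan is to dispatch the three assertions in order, with \Cref{lemma:Decay_property_inheritance} and Tonelli's theorem doing the real work. Throughout, I regard $a<b$, the degenerate cases $b=a$ and $b=a+1$ being covered by the stated conventions and the base case below.

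First I would read off the symmetric decay property of $s_r^{[a+1:k]}$ directly from its formula. After extension by zero it is
$s_r^{[a+1:k]}(u) = \tfrac{\alpha_{k+1}}{\gamma_{k+1}}\bigl(\bigl(r^p - \sum_{j=a+1}^k \absval{\gamma_j u_j/\alpha_j}^p\bigr)_+\bigr)^{1/p}$,
and, fixing all coordinates but $u_{j_0}$, this is the composition of the nondecreasing map $t\mapsto(t_+)^{1/p}$ on $\Reals$ with the map $u_{j_0}\mapsto C-\absval{\gamma_{j_0}u_{j_0}/\alpha_{j_0}}^p$, which is symmetric in $u_{j_0}$ and nonincreasing in $\absval{u_{j_0}}$, followed by multiplication by the positive constant $\alpha_{k+1}/\gamma_{k+1}$. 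Hence $s_r^{[a+1:k]}$ is symmetric and decreasing along each coordinate. For $k=a$ the object $s_r^{a}$ is a positive constant, so there is nothing to check.

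Second, I would obtain the symmetric decay property of $\Lambda_{b,r}^{[a+1:j]}$ for $j=b,b-1,\dots,a+1$ by downward induction on $j$. For the base case $j=b$, writing $t_u\defeq r^p-\sum_{j=a+1}^b\absval{\gamma_j u_j/\alpha_j}^p$ one has $\Lambda_{b,r}^{[a+1:b]}(u)=\lambda_{[1:a]}\bigl(\cylBall{0}{((t_u)_+)^{1/p}}{[1:a]}\bigr)$, which is a nondecreasing function of $t_u$ because the Lebesgue measure of a ball is nondecreasing in its radius; the composition argument of the first step then gives the symmetric decay property. For the inductive step, given $a+1\le k\le b-1$ and $\Lambda_{b,r}^{[a+1:k+1]}$ already known to have the symmetric decay property, I would apply \Cref{lemma:Decay_property_inheritance} in dimension $d=k+1-a\ge 2$ with $s=s_r^{[a+1:k]}$ (symmetric decay by the first step), $f=\Lambda_{b,r}^{[a+1:k+1]}$ (inductive hypothesis), and $g=\rho\ge 0$; its conclusion is exactly that $\Lambda_{b,r}^{[a+1:k]}$ has the symmetric decay property.

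Third, to identify $\Lambda_{b,r}^{[a+1:a]}$ with $V_r(0,a,b)$, I would unroll the recursion from $k=a$ up to $k=b-1$ and apply Tonelli to the nonnegative integrand to obtain
\[
\Lambda_{b,r}^{[a+1:a]}
=
\int_{\tilde D}\Bigl(\prod_{k=a+1}^{b}\rho(u_k)\Bigr)\,\Lambda_{b,r}^{[a+1:b]}(u_{[a+1:b]})\,\rd u_{[a+1:b]},
\qquad
\tilde D \defeq \bigl\{u : \absval{u_k}<s_r^{[a+1:k-1]}(u_{[a+1:k-1]}) \text{ for } a+1\le k\le b\bigr\}.
\]
Since $\absval{u_k}<s_r^{[a+1:k-1]}(u_{[a+1:k-1]})$ is equivalent to $\sum_{j=a+1}^{k}\absval{\gamma_j u_j/\alpha_j}^p<r^p$ and the partial sums increase in $k$, the whole system collapses to $\sum_{j=a+1}^{b}\absval{\gamma_j u_j/\alpha_j}^p<r^p$, i.e.\ $\tilde D=\gamma_{[a+1:b]}^{-1}\odot\cylBall{0}{r}{[a+1:b]}$. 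The substitution $x=\gamma_{[a+1:b]}\odot u$ then turns $\prod_k\rho(u_k)\,\rd u$ into $\rho^{[a+1:b]}(x)\,\rd x$, maps $\tilde D$ onto $\cylBall{0}{r}{[a+1:b]}$, and sends $\Lambda_{b,r}^{[a+1:b]}(u)$ to $\lambda_{[1:a]}(\cylBall{0|x}{r}{[1:a]})$ by its very definition; comparison with the definition of $V_r(\quark,a,b)$ at $h=0$ yields $\Lambda_{b,r}^{[a+1:a]}=V_r(0,a,b)$. I expect this last step to be the only one needing genuine care — tracking the nested, variable-dependent integration limits and checking that they telescope to a single ball — but it is bookkeeping rather than a real obstacle.
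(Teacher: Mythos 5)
Your proposal is correct and follows essentially the same route as the paper: symmetric decay of $s_{r}^{[a+1:k]}$ and $\Lambda_{b,r}^{[a+1:b]}$ read off from the definitions, the remaining $\Lambda_{b,r}^{[a+1:k]}$ handled by recursive application of \Cref{lemma:Decay_property_inheritance} with $g=\rho$, and the identity $\Lambda_{b,r}^{[a+1:a]}=V_{r}(0,a,b)$ obtained from the definitions. Your third step merely spells out (via Tonelli, the telescoping of the nested limits, and the substitution $x=\gamma_{[a+1:b]}\odot u$) what the paper's proof asserts follows directly from the definitions, which is a welcome amount of extra detail rather than a deviation.
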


\begin{proof}
	The symmetric decay properties of $s_{r}^{[a+1:k]}$, $a \leq k < b$, and $\Lambda_{b,r}^{[a+1:b]}$ follow directly from the definitions.
	The symmetric decay property of $\Lambda_{b,r}^{[a+1:k]}$, $a < k < b$, then follows recursively by consecutive application of \Cref{lemma:Decay_property_inheritance} with $g=\rho$.
	The statement $\Lambda_{b,r}^{[a+1:a]} = V_{r}(0,a,b)$ follows from the definitions of $\rho^{[a:b]}$, $V_{r}(0,a,b)$, $s_{r}^{[a+1:k]}$ and $\Lambda_{b,r}^{[a+1:k]}$, $a \leq k \leq b$.
\end{proof}

\begin{lemma}
	\label{lemma:Inequalities_Volumes_around_h}
	Let \Cref{assump:basic_assumptions_for_product_measures} \ref{item:basic_assumption_product_full_measure}--\ref{item:basic_assumption_product_scaled_marginals} hold with $m=0$.
	Then, using \Cref{notation:OM_for_product_measures}, for any $r>0$, $a,b \in \Naturals$ and $h \in X$,
	\[
	\lambda_{[1:a]}(\cylBall{0}{r}{[1:a]})
	\geq
	V_{r}(m,a,b)=V_{r}(0,a,b)
	\geq
	V_{r}(h,a,b).
	\]
	If, in addition, either \Cref{assump:basic_assumptions_for_product_measures} \ref{item:basic_assumption_product_integrable_second_derivative}  or \ref{item:basic_assumption_product_Besov_p_1_2} is satisfied, then for any $h \in E_{\gamma,m} \cap \ell_{\gamma}^{2}$ with $\gamma_{k}^{-1} \absval{h_{k}} \leq 1$, $k\in[a+1:b]$,
	\[
	V_{r}(h,a,b)
	\geq
	V_{r}(0,a,b) \, \prod_{k\in\cJ} c_{k},
	\qquad
	c_{k}
	=
	\begin{cases}
		1 + \overline{\zeta}_{k}\, \Absval{\frac{h_{k}}{\gamma_{k}}}^{2}
		& \text{if \ref{item:basic_assumption_product_integrable_second_derivative} holds,}
		\\[1ex]
		\exp\left(- \Absval{\frac{h_{k}}{\gamma_{k}}}^{p}\right)
		& \text{if \ref{item:basic_assumption_product_Besov_p_1_2} holds,}
	\end{cases}
	\]
	for certain $\overline{\zeta}_{k} \in [-\frac{M}{2},\frac{M}{2}]$ with $M>1$ as in \Cref{lemma:Perturbation_by_shifted_rho}.
\end{lemma}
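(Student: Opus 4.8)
The plan is to turn $V_{r}(h,a,b)$ into an iterated one‑dimensional integral and then peel it off one coordinate at a time, feeding each coordinate to the one‑dimensional perturbation inequalities \Cref{lemma:Inequalities_Volumes_around_h_1_d}, \Cref{lemma:Perturbation_by_shifted_rho} and \Cref{lem:Perturbation_by_shifts_for_Besov}, while \Cref{lemma:Symmetric_decay_property_Lambda} keeps the intermediate integrands in the symmetry class those lemmas require. Concretely, in the definition of $V_{r}(h,a,b)$ from \Cref{notation:OM_for_product_measures} I would substitute $u_{k} = \gamma_{k} v_{k}$ for $k \in [a+1\mathord{:}b]$; since $\rho^{[a+1:b]}(x) = \prod_{k \in [a+1:b]} \gamma_{k}^{-1}\rho(\gamma_{k}^{-1}x_{k})$, the Jacobian cancels, and writing $\tilde{h}_{k} \defeq \gamma_{k}^{-1} h_{k}$ and using the nested radii $s_{r}^{[a+1:k]}$ of \Cref{notation:OM_for_product_measures} together with Fubini's theorem gives
\[
V_{r}(h,a,b)
=
\int_{-s_{r}^{a}}^{s_{r}^{a}}\rho(v_{a+1}+\tilde{h}_{a+1})\cdots\int_{-s_{r}^{[a+1:b-1]}}^{s_{r}^{[a+1:b-1]}}\rho(v_{b}+\tilde{h}_{b})\,\Lambda_{b,r}^{[a+1:b]}(v_{[a+1:b]})\,\rd v_{b}\cdots\rd v_{a+1},
\]
whose value at $h = 0$ (equivalently $h = m$, since $m = 0$ by hypothesis) is, by unwinding the recursive definition of $\Lambda_{b,r}^{[a+1:k]}$, exactly $\Lambda_{b,r}^{[a+1:a]} = V_{r}(0,a,b)$ --- this last identity is \Cref{lemma:Symmetric_decay_property_Lambda}.

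For the three‑term chain, the identity $V_{r}(m,a,b) = V_{r}(0,a,b)$ is immediate from $m = 0$. The bound $\lambda_{[1:a]}(\cylBall{0}{r}{[1:a]}) \geq V_{r}(0,a,b)$ I would read off the defining integral: $\cylBall{0|u}{r}{[1:a]} = \cylBall{0}{r(u)}{[1:a]}$ with $r(u) \leq r$, hence $\lambda_{[1:a]}(\cylBall{0|u}{r}{[1:a]}) \leq \lambda_{[1:a]}(\cylBall{0}{r}{[1:a]})$, and pulling this constant out leaves $\int_{\cylBall{0}{r}{[a+1:b]}}\rho^{[a+1:b]}(u)\,\rd u \leq 1$. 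For $V_{r}(0,a,b) \geq V_{r}(h,a,b)$ I would peel the Fubini expansion from the innermost integral outwards: \Cref{assump:basic_assumptions_for_product_measures}\ref{item:basic_assumption_product_mu_0} gives $\rho$ the symmetric decay property and \Cref{lemma:Symmetric_decay_property_Lambda} gives $\Lambda_{b,r}^{[a+1:b]}(v_{[a+1:b-1]},\quark)$ the symmetric decay property, so \Cref{lemma:Inequalities_Volumes_around_h_1_d} with $f = \rho$ and $g = \Lambda_{b,r}^{[a+1:b]}(v_{[a+1:b-1]},\quark)$ replaces $\rho(v_{b} + \tilde{h}_{b})$ by $\rho(v_{b})$, collapsing the $v_{b}$‑integral to $\Lambda_{b,r}^{[a+1:b-1]}(v_{[a+1:b-1]})$; since every remaining factor $\rho(v_{j} + \tilde{h}_{j})$ is non‑negative, this pointwise inequality survives the outer integrations. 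Iterating over $v_{b-1},\dots,v_{a+1}$, each step again licensed by \Cref{lemma:Inequalities_Volumes_around_h_1_d} and \Cref{lemma:Symmetric_decay_property_Lambda}, collapses the whole expression to $\Lambda_{b,r}^{[a+1:a]} = V_{r}(0,a,b)$.

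For the lower bound I would run the same peeling but extract a \emph{constant} factor at each coordinate. Under \ref{item:basic_assumption_product_integrable_second_derivative}, at the $v_{k}$‑integral (with $s = s_{r}^{[a+1:k-1]}(v_{[a+1:k-1]})$ and $\absval{\tilde{h}_{k}} \leq 1$) \Cref{lemma:Perturbation_by_shifted_rho}\ref{item:Perturbation_by_shifted_rho}, applied with $\Lambda = \Lambda_{b,r}^{[a+1:k]}(v_{[a+1:k-1]},\quark)$, rewrites $\int_{-s}^{s}\rho(v_{k}+\tilde{h}_{k})\,\Lambda(v_{k})\,\rd v_{k}$ as $(1 + \zeta\,\tilde{h}_{k}^{2})\,\Lambda_{b,r}^{[a+1:k-1]}(v_{[a+1:k-1]})$ for some $\zeta \in [-\tfrac{M}{2},\tfrac{M}{2}]$; since $\zeta$ still depends on the not‑yet‑integrated $v_{[a+1:k-1]}$, I would immediately bound $1 + \zeta\,\tilde{h}_{k}^{2} \geq c_{k} \defeq 1 - \tfrac{M}{2}\tilde{h}_{k}^{2}$, which is a variable‑free constant of the required shape $1 + \overline{\zeta}_{k}\absval{h_{k}/\gamma_{k}}^{2}$ with $\overline{\zeta}_{k} = -\tfrac{M}{2}$, and pull $c_{k}$ outside the remaining integrals (valid regardless of its sign). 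Under \ref{item:basic_assumption_product_Besov_p_1_2}, $\rho$ is proportional to $u \mapsto e^{-\absval{u}^{p}}$ with $1 \leq p \leq 2$, so \Cref{lem:Perturbation_by_shifts_for_Besov} (which needs only that $\Lambda$ is even) performs the same step with the positive constant $c_{k} = \exp(-\absval{h_{k}/\gamma_{k}}^{p})$. Multiplying these bounds over $k \in [a+1\mathord{:}b]$ (an index with $h_{k} = 0$ contributing a factor $1$) and collapsing the resulting shift‑free iterated integral back to $\Lambda_{b,r}^{[a+1:a]} = V_{r}(0,a,b)$ yields $V_{r}(h,a,b) \geq \bigl(\prod_{k \in \cJ} c_{k}\bigr) V_{r}(0,a,b)$.

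The main obstacle is the bookkeeping in this induction: one must verify at each level that the integrand that remains after eliminating the innermost variable, namely $\Lambda_{b,r}^{[a+1:j]}$, is again of the kind the perturbation lemmas accept --- i.e.\ still has the symmetric decay property in its current last argument --- which is precisely what \Cref{lemma:Symmetric_decay_property_Lambda} supplies; and, in the \ref{item:basic_assumption_product_integrable_second_derivative} case, one must resist carrying the factor $1 + \zeta\,\tilde{h}_{k}^{2}$ forward, since its $\zeta$ depends on the outer variables, and instead replace it by the constant bound $1 - \tfrac{M}{2}\tilde{h}_{k}^{2}$ before moving to the next coordinate.
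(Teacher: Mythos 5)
Your treatment of the chain $\lambda_{[1:a]}(\cylBall{0}{r}{[1:a]}) \geq V_{r}(0,a,b) \geq V_{r}(h,a,b)$ and of the lower bound under \Cref{assump:basic_assumptions_for_product_measures} \ref{item:basic_assumption_product_Besov_p_1_2} matches the paper's argument: the same change of variables, the same innermost-outward peeling via \Cref{lemma:Inequalities_Volumes_around_h_1_d} respectively \Cref{lem:Perturbation_by_shifts_for_Besov}, with \Cref{lemma:Symmetric_decay_property_Lambda} certifying the symmetric decay of each $\Lambda_{b,r}^{[a+1:k]}$. Those parts are fine (in the Besov case the extracted constants $e^{-\absval{\tilde{h}_{k}}^{p}}$ are positive, so the iteration is unproblematic).

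The gap is in the \ref{item:basic_assumption_product_integrable_second_derivative} branch, precisely at the step you flag as ``valid regardless of its sign.'' Pulling the constant $c_{k} = 1 - \tfrac{M}{2}\tilde{h}_{k}^{2}$ out of the integral is indeed sign-independent, but the \emph{next} step of your induction multiplies the already-extracted product $\prod_{j>k} c_{j}$ by a fresh lower bound for the remaining iterated integral; if any $c_{j} < 0$, this reverses the inequality and the induction collapses. Under the stated hypotheses this case is not excluded: $M > 1$ is whatever \Cref{lemma:Perturbation_by_shifted_rho} delivers (possibly $M > 2$) and $\absval{\tilde{h}_{k}}$ may equal $1$, so $1 - \tfrac{M}{2}\tilde{h}_{k}^{2}$ can be negative. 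Moreover, the conclusion with the specific choice $\overline{\zeta}_{k} = -\tfrac{M}{2}$ that you are trying to prove is not what the lemma asserts --- it only claims existence of \emph{some} $\overline{\zeta}_{k} \in [-\tfrac{M}{2},\tfrac{M}{2}]$ --- and your strengthened version is not obviously true when signs mix. The paper avoids all of this by exploiting that \Cref{lemma:Perturbation_by_shifted_rho}\ref{item:Perturbation_by_shifted_rho} is an exact \emph{identity}: the inner integral equals $\bigl(1+\zeta_{k}(v_{[a+1:k-1]})\,\tilde{h}_{k}^{2}\bigr)\Lambda_{b,r}^{[a+1:k-1]}(v_{[a+1:k-1]})$, and integrating this against the non-negative outer weight and applying the (weighted) mean value theorem for definite integrals replaces the variable-dependent $\zeta_{k}(\quark)$ by a single constant $\overline{\zeta}_{k}$ in the same interval, so that each peeling step is an \emph{equality} and no sign bookkeeping is needed. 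You should adopt that device (or restrict to the regime $\tfrac{M}{2}\tilde{h}_{k}^{2} < 1$, which suffices for the application in Step~2 of the proof of \Cref{thm:OM_for_product_measures} but does not prove the lemma as stated).
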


\begin{proof}
	Since $\rho^{[a+1:b]} (\quark + h_{[a+1:b]})$ integrates to $1$ as a probability density, and since for any $u \in \Reals^{[a+1:b]}$ it holds that $\cylBall{0|u}{r}{[1:a]} \subseteq \cylBall{0}{r}{[1:a]}$, the first inequality follows.
	Let $\tilde{h}_{k} \defeq \gamma_{k}^{-1} h_{k}$.
	The second inequality follows by applying \Cref{lemma:Inequalities_Volumes_around_h_1_d} and by using $\Lambda_{b,r}^{[a+1:a]} = V_{r}(0,a,b)$ (cf.\ \Cref{lemma:Symmetric_decay_property_Lambda}):
	
	\newpage
	
	\begin{align*}
		V_{r}(h,a,b)
		&=
		\int_{\cylBall{0}{r}{[a+1:b]}} \rho^{[a+1:b]} (u + h_{[a+1:b]})
		\, \lambda_{[1:a]}(\cylBall{0|u}{r}{[1:a]})
		\, \rd u
		\\
		&=
		\int_{\gamma_{[a+1:b]}^{-1} \odot \cylBall{0}{r}{[a+1:b]}}
		\biggl(
		\prod_{k = a+1}^{b} \rho (u_{k} + \tilde{h}_{k})
		\biggr)
		\, \lambda_{[1:a]}\bigl( \cylBall{0|\gamma_{[a+1:b]} \odot u}{r}{[1:a]} \bigr)
		\, \rd u
		\\
		&=
		\int_{-s_{r}^{a}}^{s_{r}^{a}}
		\rho (u_{a+1} + \tilde{h}_{a+1})
		\int_{-s_{r}^{[a+1:a+1]}(u_{a+1})}^{s_{r}^{[a+1:a+1]}(u_{a+1})}
		\rho (u_{a+2} + \tilde{h}_{a+2})
		\,
		\cdots
		\\  
		& \hspace{2em}
		\underbrace{
			\int_{-s_{r}^{[a+1:b-1]}(u_{[a+1:b-1]})}^{s_{r}^{[a+1:b-1]}(u_{[a+1:b-1]})}
			\rho (u_{b} + \tilde{h}_{b})
			\, \Lambda_{b,r}^{[a+1:b]}(u_{[a+1:b]})
			\, \rd u_{b}
		}_{\leq\, \Lambda_{b,r}^{[a+1:b-1]}(u_{[a+1:b-1]}) \text{ by \Cref{lemma:Inequalities_Volumes_around_h_1_d,lemma:Symmetric_decay_property_Lambda}}}
		\cdots \rd u_{a+2} \, \rd u_{a+1}
		\\
		&\leq
		\int_{-s_{r}^{a}}^{s_{r}^{a}}
		\rho (u_{a+1} + \tilde{h}_{a+1})
		\int_{-s_{r}^{[a+1:a+1]}(u_{a+1})}^{s_{r}^{[a+1:a+1]}(u_{a+1})}
		\rho (u_{a+2} + \tilde{h}_{a+2})
		\,
		\cdots
		\\
		& \hspace{2em}
		\underbrace{
			\int_{-s_{r}^{[a+1:b-2]}(u_{[a+1:b-2]})}^{s_{r}^{[a+1:b-2]}(u_{[a+1:b-2]})}
			\rho (u_{b-1} + \tilde{h}_{b-1})
			\, \Lambda_{b,r}^{[a+1:b-1]}(u_{[a+1:b-1]})
			\, \rd u_{b-1}
		}_{\leq\, 		\Lambda_{b,r}^{[a+1:b-2]}(u_{[a+1:b-2]}) \text{ by \Cref{lemma:Inequalities_Volumes_around_h_1_d,lemma:Symmetric_decay_property_Lambda}}
		}
		\cdots \rd u_{a+2} \, \rd u_{a+1}
		\\
		&\vdotsquark
		\\
		&\leq
		\Lambda_{b,r}^{[a+1:a]}
		=
		V_{r}(0,a,b).
	\end{align*}
	
	Now let \change{in addition} \Cref{assump:basic_assumptions_for_product_measures} \ref{item:basic_assumption_product_integrable_second_derivative}
	hold, $h \in E_{\gamma,m} \cap \ell_{\gamma}^{2}$ and $M>1$ as in \Cref{lemma:Perturbation_by_shifted_rho}.
	Then, for $k = a+1,\dots,b$, there exist values $\zeta_{k}(u_{[a+1:k-1]}) \in [-\frac{M}{2},\frac{M}{2}]$ by \Cref{lemma:Perturbation_by_shifted_rho} and $\overline{\zeta}_{k} \in [-\frac{M}{2},\frac{M}{2}]$ by the mean value theorem for definite integrals, such that
	\begin{align*}
		\allowdisplaybreaks
		V_{r}(h,a,b)
		&=
		\int_{\cylBall{0}{r}{[a+1:b]}} \rho^{[a+1:b]} (u + h_{[a+1:b]})
		\, \lambda_{[1:a]}(\cylBall{0|u}{r}{[1:a]})
		\, \rd u
		\\
		&=
		\int_{\gamma_{[a+1:b]}^{-1} \odot \cylBall{0}{r}{[a+1:b]}}
		\biggl(
		\prod_{k = a+1}^{b} \rho (u_{k} + \tilde{h}_{k})
		\biggr)
		\, \lambda_{[1:a]}\bigl( \cylBall{0|\gamma_{[a+1:b]} \odot u}{r}{[1:a]} \bigr)
		\, \rd u
		\\
		&=
		\int_{-s_{r}^{a}}^{s_{r}^{a}}
		\rho (u_{a+1} + \tilde{h}_{a+1})
		\int_{-s_{r}^{[a+1:a+1]}(u_{a+1})}^{s_{r}^{[a+1:a+1]}(u_{a+1})}
		\rho (u_{a+2} + \tilde{h}_{a+2})
		\,
		\cdots
		\\  
		& \hspace{2em}
		\underbrace{
			\int_{-s_{r}^{[a+1:b-1]}(u_{[a+1:b-1]})}^{s_{r}^{[a+1:b-1]}(u_{[a+1:b-1]})}
			\rho (u_{b} + \tilde{h}_{b})
			\, \Lambda_{b,r}^{[a+1:b]}(u_{[a+1:b]})
			\, \rd u_{b}
		}_{
			\geq \bigl( 1 + \zeta_{b}(u_{[a+1:b-1]}) \, \absval{\tilde{h}_{b}}^{2} \bigr)
			\Lambda_{b,r}^{[a+1:b-1]}(u_{[a+1:b-1]}) \text{ by \Cref{lemma:Perturbation_by_shifted_rho,lemma:Symmetric_decay_property_Lambda}}
		}
		\cdots \rd u_{a+2} \, \rd u_{a+1}
		\\
		& \quad \geq
		\bigl( 1 + \overline{\zeta}_{b}\, \absval{\tilde{h}_{b}}^{2} \bigr)
		\int_{-s_{r}^{a}}^{s_{r}^{a}}
		\rho (u_{a+1} + \tilde{h}_{a+1})
		\int_{-s_{r}^{[a+1:a+1]}(u_{a+1})}^{s_{r}^{[a+1:a+1]}(u_{a+1})}
		\rho (u_{a+2} + \tilde{h}_{a+2})
		\,
		\cdots
		\\
		& \hspace{2em}
		\underbrace{
			\int_{-s_{r}^{[a+1:b-2]}(u_{[a+1:b-2]})}^{s_{r}^{[a+1:b-2]}(u_{[a+1:b-2]})}
			\rho (u_{b-1} + \tilde{h}_{b-1})
			\, \Lambda_{b,r}^{[a+1:b-1]}(u_{[a+1:b-1]})
			\, \rd u_{b-1}
		}_{
			\geq \bigl( 1 + \zeta_{b-1}(u_{a+1},\dots,u_{b-2}) \, \tilde{h}_{b-1}^{2} \bigr)
			\Lambda_{b,r}^{[a+1:b-2]}(u_{[a+1:b-2]}) \text{ by \Cref{lemma:Perturbation_by_shifted_rho,lemma:Symmetric_decay_property_Lambda}}
		}
		\cdots \rd u_{a+2} \, \rd u_{a+1}
		\\
		\intertext{and iterating this process yields}
		& V_{r}(h,a,b)
		\geq
		\Lambda_{b,r}^{[a+1:a]} \, \prod_{k=a+1}^{b}
		\bigl( 1 + \overline{\zeta}_{k}\, \tilde{h}_{k}^{2} \bigr)
		=
		V_{r}(0,a,b) \, \prod_{k=a+1}^{b}
		\bigl( 1 + \overline{\zeta}_{k}\, \tilde{h}_{k}^{2} \bigr),
	\end{align*}
	proving the first formula for $c_{k}$.
	Now, let \Cref{assump:basic_assumptions_for_product_measures} \ref{item:basic_assumption_product_Besov_p_1_2} be satisfied instead of \ref{item:basic_assumption_product_integrable_second_derivative}.
	Then $\rho(u_{k}+\tilde{h}_{k})\propto \exp(-\absval{u_{k}+\tilde{h}_{k}}^p)$.
	Using \Cref{lem:Perturbation_by_shifts_for_Besov} instead of \Cref{lemma:Perturbation_by_shifted_rho} and replacing $\bigl( 1 + \zeta_{k}(u_{[a+1:k-1]}) \, \absval{\tilde{h}_{k}}^{2} \bigr)$ and $\bigl( 1 + \overline{\zeta}_{k}\, \tilde{h}_{k}^{2} \bigr)$ by $\exp \bigl(- \absval{\tilde{h}_{k}}^{p}\bigr)$ in the above derivation, we obtain the second formula for $c_{k}$.
	Note that, in the case that \ref{item:basic_assumption_product_integrable_second_derivative} holds, all $\geq$ inequalities in the above derivation are actually equalities.
	We stated them as inequalities such that the proof can be transferred to the case where \ref{item:basic_assumption_product_Besov_p_1_2} is satisfied.
\end{proof}

\begin{lemma}
	\label{lemma:Inequality_for_small_balls_using_Volumes_around_h}
	Under \Cref{assump:basic_assumptions_for_product_measures} \ref{item:basic_assumption_product_full_measure}--\ref{item:basic_assumption_product_scaled_marginals} and using \Cref{notation:OM_for_product_measures}, for any $r>0$, $a,b \in \Naturals$ and $h\in X$,
	\begin{align*}
		\mu^{[1:b]}( \cylBall{h_{[1:b]}}{r}{[1:b]} )
		&\geq
		V_{r}(h,a,b) \, \inf_{v \in \cylBall{0}{r}{[1:a]}} \rho^{[1:a]} (v+h_{[1:a]}),
		\\
		\mu^{[1:b]}( \cylBall{h_{[1:b]}}{r}{[1:b]} )
		&\leq
		V_{r}(h,a,b) \, \sup_{v \in \cylBall{0}{r}{[1:a]}} \rho^{[1:a]} (v+h_{[1:a]}).		
	\end{align*}
\end{lemma}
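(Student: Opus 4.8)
The plan is to write the left-hand ball mass as an iterated integral, splitting the coordinate set $[1\mathord:b]$ into the first $a$ and the last $b-a$ coordinates, and then to estimate the integrand over the first $a$ coordinates by its extremal values over the full ball $\cylBall{0}{r}{[1:a]}$.

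\textbf{Step 1 (decomposition of the ball via Tonelli).}
For $x = (x_{[1:a]},x_{[a+1:b]}) \in \Reals^{[1:a]}\times\Reals^{[a+1:b]}$, the defining relation of $\norm{\quark}_{\ell^p_\alpha}$ shows that $x \in \cylBall{h_{[1:b]}}{r}{[1:b]}$ is equivalent to $\norm{x_{[a+1:b]}-h_{[a+1:b]}}_{\ell^p_\alpha([a+1:b])}^p < r^p$ together with $\norm{x_{[1:a]}-h_{[1:a]}}_{\ell^p_\alpha([1:a])}^p < r^p - \norm{x_{[a+1:b]}-h_{[a+1:b]}}_{\ell^p_\alpha([a+1:b])}^p$. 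Since $\mu^{[1:b]}$ has the product density $\rho^{[1:b]} = \rho^{[1:a]}\otimes\rho^{[a+1:b]}$ and the integrand is non-negative and measurable, Tonelli's theorem permits integrating over $x_{[1:a]}$ first; the inner integral is empty (hence $0$) precisely when $x_{[a+1:b]}\notin\cylBall{h_{[a+1:b]}}{r}{[a+1:b]}$. After the substitutions $u \defeq x_{[a+1:b]}-h_{[a+1:b]}$ (admissible range $\cylBall{0}{r}{[a+1:b]}$) and $v \defeq x_{[1:a]}-h_{[1:a]}$ (admissible range $\norm{v}_{\ell^p_\alpha([1:a])}^p < r^p - \norm{u}_{\ell^p_\alpha([a+1:b])}^p$, i.e.\ $v\in\cylBall{0|u}{r}{[1:a]}$ by the definition in \Cref{notation:OM_for_product_measures}), one obtains
\begin{equation*}
	\mu^{[1:b]}(\cylBall{h_{[1:b]}}{r}{[1:b]})
	=
	\int_{\cylBall{0}{r}{[a+1:b]}} \rho^{[a+1:b]}(u + h_{[a+1:b]}) \Biggl( \int_{\cylBall{0|u}{r}{[1:a]}} \rho^{[1:a]}(v + h_{[1:a]})\,\rd v \Biggr) \rd u .
\end{equation*}

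\textbf{Step 2 (estimating the inner integral).}
Because the remaining radius $(r^p - \norm{u}_{\ell^p_\alpha([a+1:b])}^p)^{1/p}$ never exceeds $r$, one has $\cylBall{0|u}{r}{[1:a]}\subseteq\cylBall{0}{r}{[1:a]}$ for every $u\in\cylBall{0}{r}{[a+1:b]}$, so on the domain of the inner integral $\inf_{v'\in\cylBall{0}{r}{[1:a]}}\rho^{[1:a]}(v'+h_{[1:a]}) \le \rho^{[1:a]}(v+h_{[1:a]}) \le \sup_{v'\in\cylBall{0}{r}{[1:a]}}\rho^{[1:a]}(v'+h_{[1:a]})$; the supremum is finite since $\rho$ is continuous, hence bounded on the compact closure of $\cylBall{0}{r}{[1:a]}$. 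Pulling these constants out of the inner integral leaves $\lambda_{[1:a]}(\cylBall{0|u}{r}{[1:a]})$, and integrating against $\rho^{[a+1:b]}(u+h_{[a+1:b]})$ over $\cylBall{0}{r}{[a+1:b]}$ reproduces $V_r(h,a,b)$ by its definition. This yields exactly the two claimed inequalities.

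\textbf{Main obstacle.}
There is no analytic difficulty; the work is purely organisational. The point requiring care is the bookkeeping in Step 1 — checking that the change of variables lines up the single constraint $\norm{u}_{\ell^p_\alpha([a+1:b])}^p + \norm{v}_{\ell^p_\alpha([1:a])}^p < r^p$ with the nested description ``$u\in\cylBall{0}{r}{[a+1:b]}$ and $v\in\cylBall{0|u}{r}{[1:a]}$'', and that the surviving outer integral is literally the expression defining $V_r(h,a,b)$ in \Cref{notation:OM_for_product_measures}.
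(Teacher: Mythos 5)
Your proof is correct and follows essentially the same route as the paper's: the same Tonelli/fiber decomposition of $\cylBall{h_{[1:b]}}{r}{[1:b]}$ into an outer integral over $u\in\cylBall{0}{r}{[a+1:b]}$ and inner integrals over $\cylBall{0|u}{r}{[1:a]}$, followed by bounding the inner integrand via the inclusion $\cylBall{0|u}{r}{[1:a]}\subseteq\cylBall{0}{r}{[1:a]}$ and recognising the remaining outer integral as $V_{r}(h,a,b)$.
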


\begin{proof}
	For any $u \in \Reals^{[a+1:b]}$, $\cylBall{0|u}{r}{[1:a]} \subseteq \cylBall{0}{r}{[1:a]}$.
	\change{
	In addition, $\cylBall{0}{r}{[1:b]}=\biguplus_{u\in\cylBall{0}{r}{[a+1:b]}}\cylBall{0|u}{r}{[1:a]} \times \{u\}$, where $\uplus$ indicates a disjoint union.}
	\change{This is because every $y \in \cylBall{0}{r}{[1:b]}$ satisfies $y=(y_{[1:a]},y_{[a+1:b]})\in\Reals^{[1:b]}$, where $y_{[1:a]}\in \cylBall{0|y_{[a+1:b]}}{r}{[1:a]}$ and $y_{[a+1:b]}\in \cylBall{0}{r}{[a+1:b]}$ are unique.
	This partition of the domain of integration yields
	}

	\begin{align*}
		\mu^{[1:b]}( \cylBall{h_{[1:b]}}{r}{[1:b]} )
		&=
		\int_{\cylBall{0}{r}{[1:b]}} \rho^{[1:b]} (y + h_{[1:b]}) \, \rd y
		\\
		&=
		\int_{\cylBall{0}{r}{[a+1:b]}} \rho^{[a+1:b]} (u+h_{[a+1:b]})
		\int_{\cylBall{0|u}{r}{[1:a]}} \rho^{[1:a]} (v+h_{[1:a]})
		\, \rd v \, \rd u
		\\
		&\geq
		\int_{\cylBall{0}{r}{[a+1:b]}} \rho^{[a+1:b]} (u+h_{[a+1:b]})
		\, \lambda_{[1:a]}(\cylBall{0|u}{r}{[1:a]})
		\inf_{v \in \cylBall{0|u}{r}{[1:a]}} \rho^{[1:a]} (v+h_{[1:a]})
		\, \rd u
		\\
		&\geq
		V_{r}(h,a,b)\, \inf_{v \in \cylBall{0}{r}{[1:a]}} \rho^{[1:a]} (v+h_{[1:a]}).
	\end{align*}
	A similar argument yields the second inequality.
\end{proof}

\begin{proof}[Proof of \Cref{thm:OM_for_product_measures}]
	Since $\norm{\quark}_{X} = \norm{\quark}_{\ell_{\alpha}^{p}}$, we have that, for any $r>0$ and $h\in X$, $\cBall{h}{r} = \bigcap_{K \in \Naturals} \bigl( \cylBall{h}{r}{[1:K]} \times \Reals^{\Naturals\setminus [1:K]} \bigr)$.
	Thus, by the continuity of probability measures,
	\[
		\mu(\cBall{h}{r})
		=
		\lim_{K \to \infty}
		\mu\bigl( \cylBall{h_{[1:K]}}{r}{[1:K]} \times \Reals^{\Naturals\setminus [1:K] }\bigr)
		=
		\lim_{K \to \infty}
		\mu^{[1:K]}\bigl( \cylBall{h_{[1:K]}}{r}{[1:K]}\bigr)
	\]
	and thereby
	\[
		\frac{\mu(\cBall{h}{r})}{\mu(\cBall{m}{r})}
		=
		\lim_{K \to \infty} \frac{\mu^{[1:K]}( \cylBall{h_{[1:K]}}{r}{[1:K]} )}{\mu^{[1:K]}( \cylBall{m_{[1:K]}}{r}{[1:K]} )}.
	\]
    The proof will now be established using the following three steps, of which the third is straightforward:
	
	\smallskip

	\noindent\textbf{Step 1.} Let $m=0$.
	For every $h \in X$, $N>0$ and $0<\varepsilon<1$ there exist $r_{\ast}>0$ and $K_{\ast}\in\Naturals$ such that for any $0<r<r_{\ast}$ and $K>K_{\ast}$,
	\[
	- \log \frac{\mu^{[1:K]}( \cylBall{h_{[1:K]}}{r}{[1:K]} )}{\mu^{[1:K]}( \cylBall{m_{[1:K]}}{r}{[1:K]} )}
	\geq
	\begin{cases}
	(1-\varepsilon) \, \logden_{\gamma,m}(h) - \varepsilon & \text{if } h\in E_{\gamma,m},
	\\
	N & \text{if } h\notin E_{\gamma,m}.
	\end{cases}
	\]
	Since the right-hand side does not depend on $r$ and $K$ and since $N,\varepsilon > 0$ are arbitrary, this proves \eqref{eq:Limit_ball_ratio_product_measures_upper_bound} for  $m=0$.

	\smallskip

	\noindent\textbf{Step 2.}
	Let $m=0$.
	If either \Cref{assump:basic_assumptions_for_product_measures} \ref{item:basic_assumption_product_integrable_second_derivative} or \ref{item:basic_assumption_product_Besov_p_1_2} is satisfied, there exist, for every $h \in E_{\gamma,m} \cap \ell_{\gamma}^{2}$ and $0 < \varepsilon < 1$, values $r_{\ast}>0$ and $K_{\ast}\in\Naturals$ such that, for any $0<r<r_{\ast}$ and $K>K_{\ast}$,
	\[
	- \log \frac{\mu^{[1:K]}( \cylBall{h_{[1:K]}}{r}{[1:K]} )}{\mu^{[1:K]}( \cylBall{m_{[1:K]}}{r}{[1:K]} )}
	\leq
	\logden_{\gamma,m}(h) + \varepsilon.
	\]
	Since the right-hand side does not depend on $r$ and $K$ and since $\varepsilon > 0$ is arbitrary, this proves \eqref{eq:Limit_ball_ratio_product_measures_lower_bound_l_2} for $m=0$.

	\smallskip

	\noindent\textbf{Step 3.} For arbitrary $m \in X$, \eqref{eq:Limit_ball_ratio_product_measures_upper_bound} and \eqref{eq:Limit_ball_ratio_product_measures_lower_bound_l_2} follow directly from
	\[
	\logden_{\gamma,m}(\quark) = \logden_{\gamma,0}(\quark - m),
	\qquad
	E_{\gamma,m} = m + E_{\gamma,0},
	\]
	finalizing the proof.

	\medskip
	
	We now give the proofs of the non-trivial first and second steps.
	
	\medskip

	\noindent\textbf{Proof of Step 1.}
	Let $m=0$, $h \in X$, $N>0$ and $0<\varepsilon<1$ and denote $\tilde{h}_{k} \defeq \gamma_{k}^{-1} h_{k}$.
	Choose $K_{\ast}$ such that
	\begin{equation}
	\label{equ:technical_choice_K_star_in_Step_1}
	\sum_{k=1}^{K_{\ast}} \logden(\tilde{h}_{k})
	\geq
	\begin{cases}
		(1-\varepsilon) \, \logden_{\gamma,m}(h) & \text{if } h\in E_{\gamma,m},
		\\
		N + \varepsilon & \text{if } h\notin E_{\gamma,m},
	\end{cases}
	\end{equation}
	where $\logden_{\gamma,m}(h)=\sum_{k\in\Naturals}\logden(\tilde{h}_{k})$ by \eqref{eq:formal_negative_log-density} and the assumption that $m=0$.
    Recall the definition of $\delta_{u}(\varepsilon)$ in \Cref{notation:OM_for_product_measures}.
    Choose
	\[
	r_{\ast}
	\defeq
	\min_{k = 1,\dots,K_{\ast}} \frac{\gamma_{k}}{\alpha_{k}}\,
	\delta_{\tilde{h}_{k}} \bigl(\tfrac{\varepsilon}{2K_{\ast}}\bigr)
	>
	0,
	\]
	which implies the following inequalities for any $0<r\leq r_{\ast}$, $v\in \cylBall{0}{r}{[1:K_{\ast}]}$ and $k\in[1:K_{\ast}]$:
	\begin{equation}
	\label{equ:technical_equation_continuity_q}
	\absval{\gamma_{k}^{-1} v_{k}}
	\leq
	\delta_{\tilde{h}_{k}}\bigl(\tfrac{\varepsilon}{2K_{\ast}}\bigr),
	\qquad
	\logden(\gamma_{k}^{-1} v_{k} + \tilde{h}_{k})
	\geq
	\logden(\tilde{h}_{k}) - \tfrac{\varepsilon}{2K_{\ast}},
	\qquad
	\logden(\gamma_{k}^{-1} v_{k})
	\leq
	\tfrac{\varepsilon}{2K_{\ast}}.
	\end{equation}
	It follows for any $0<r\leq r_{\ast}$, $K \geq K_{\ast}$ that
	\begin{align*}	
	- \log & \frac{\mu^{[1:K]}( \cylBall{h_{[1:K]}}{r}{[1:K]} )}{\mu^{[1:K]}( \cylBall{0_{[1:K]}}{r}{[1:K]} )}
	\\
	&\geq
	- \log \frac{
		V_{r}(h,K_{\ast},K) \, \sup_{v \in \cylBall{0}{r}{[1:K_{\ast}]}} \rho^{[1:K_{\ast}]} (v+h_{[1:K_{\ast}]})
	}
	{
		V_{r}(0,K_{\ast},K) \, \inf_{v \in \cylBall{0}{r}{[1:K_{\ast}]}} \rho^{[1:K_{\ast}]} (v),
	}
	& & \text{by \Cref{lemma:Inequality_for_small_balls_using_Volumes_around_h}}
	\\
	&\geq
	\inf_{v \in \cylBall{0}{r}{[1:K_{\ast}]}} \sum_{k = 1}^{K_{\ast}} \logden(\gamma_{k}^{-1} (v_{k} + h_{k})) -
	\sup_{v \in \cylBall{0}{r}{[1:K_{\ast}]}} \sum_{k = 1}^{K_{\ast}} \logden(\gamma_{k}^{-1} v_{k})
	& & \text{by \Cref{lemma:Inequalities_Volumes_around_h}}
	\\
	&\geq
	\sum_{k = 1}^{K_{\ast}} \bigl(\logden(\tilde{h}_{k}) - \tfrac{\varepsilon}{2K_{\ast}} - \tfrac{\varepsilon}{2K_{\ast}} \bigr)
	& & \text{by \eqref{equ:technical_equation_continuity_q}}
	\\
	&\geq
	- \varepsilon  + \sum_{k=1}^{K_{\ast}} \logden\bigl(\tilde{h}_{k} \bigr)
	\\
	&\geq
	\begin{cases}
	(1-\varepsilon)\, \logden_{\gamma,m}(h) - \varepsilon & \text{if } h\in E_{\gamma,m},
	\\
	N & \text{if } h\notin E_{\gamma,m},
	\end{cases}
	& & \text{by \eqref{equ:technical_choice_K_star_in_Step_1}}.
	\end{align*}
	
	\smallskip

	\noindent\textbf{Proof of Step 2.}
	Let $m=0$, $h \in E_{\gamma,m} \cap \ell_{\gamma}^{2}$ and $0<\varepsilon<1$ and denote $\tilde{h} \defeq (\tilde{h}_{k})_{k \in \Naturals} \defeq (\gamma_{k}^{-1} h_{k})_{k \in \Naturals}$.

	First let the additional \Cref{assump:basic_assumptions_for_product_measures} \ref{item:basic_assumption_product_integrable_second_derivative}
	hold.
	Since $\tilde{h}\in \ell^{2}$, we can choose $K_{\ast} \in \Naturals$ such that $\sum_{k=K_{\ast} + 1}^{\infty} \tilde{h}_{k}^{2} < \frac{\varepsilon}{2M}$, where $M>1$ is chosen as in \Cref{lemma:Perturbation_by_shifted_rho}.
	In particular, $\absval{\tilde{h}_{k}} \leq 1$ for all $k > K_{\ast}$.
	Let $r>0$ and $K \geq K_{\ast}+1$ be arbitrary.
	It follows from the second conclusion of \Cref{lemma:Inequalities_Volumes_around_h} that
	\begin{equation}
	\label{equ:lower_bound_on_V_ratio}
	\log \frac{V_{r}(h,K_{\ast},K)}{V_{r}(0,K_{\ast},K)}
	\geq
	\sum_{k = K_{\ast}+1}^{K} \log\bigl( 1 - \tfrac{M}{2} \tilde{h}_{k}^{2} \bigr)
	\geq
	-M \sum_{k = K_{\ast}+1}^{K} \tilde{h}_{k}^{2}
	\geq
	-\frac{\varepsilon}{2},
	\end{equation}
	where we used that $0 \leq \tfrac{M}{2} \tilde{h}_{k}^{2} < \tfrac{1}{4}$ and $\log(1-x) \geq - \tfrac{x}{1-x} \geq -2x$
	for $0 \leq x \leq \tfrac{1}{2}$.

	Similarly, if \Cref{assump:basic_assumptions_for_product_measures} \ref{item:basic_assumption_product_Besov_p_1_2} holds in place of \ref{item:basic_assumption_product_integrable_second_derivative}, then $h\in E_{\gamma,m} = \ell_{\gamma}^{p}$ implies the existence of $K_{\ast} \in \Naturals$ such that $\sum_{k=K_{\ast} + 1}^{\infty} |\tilde{h}_{k}|^{p} < \varepsilon/2$.
	In particular, $\absval{\tilde{h}_{k}} \leq 1$ for all $k > K_{\ast}$.
	Again, for any $r>0$ and $K \geq K_{\ast} +1$ it follows from the second conclusion of \Cref{lemma:Inequalities_Volumes_around_h} that
	\begin{equation}
		\label{equ:lower_bound_on_V_ratio_for_Besov_1_2}
		\log \frac{V_{r}(h,K_{\ast},K)}{V_{r}(0,K_{\ast},K)}
		\geq
		-\sum_{k = K_{\ast}+1}^{K} |\tilde{h}_{k}|^{p}
		\geq
		-\frac{\varepsilon}{2}.
	\end{equation}
	The rest of the proof is identical for both \Cref{assump:basic_assumptions_for_product_measures} \ref{item:basic_assumption_product_integrable_second_derivative} and \ref{item:basic_assumption_product_Besov_p_1_2}.
	Recall the definition of $\delta_{u}(\varepsilon)$ in \Cref{notation:OM_for_product_measures} and choose
	\[
	r_{\ast}
	\defeq
	\min_{k = 1,\dots,K_{\ast}} \frac{\gamma_{k}}{\alpha_{k}}\,
	\delta_{\tilde{h}_{k}} \bigl(\tfrac{\varepsilon}{2K_{\ast}}\bigr)
	>
	0,
	\]
	which implies the following inequalities for any $0<r\leq r_{\ast}$, $v\in \cylBall{0}{r}{[1:K_{\ast}]}$ and $k\in[1:K_{\ast}]$:
	\begin{equation}
	\label{equ:technical_equation_continuity_q_Step_2}
	\absval{\gamma_{k}^{-1} v_{k}}
	\leq
	\delta_{\tilde{h}_{k}}\bigl(\tfrac{\varepsilon}{2K_{\ast}}\bigr),
	\qquad
	\logden(\gamma_{k}^{-1} v_{k} + \tilde{h}_{k})
	\leq
	\logden(\tilde{h}_{k}) + \tfrac{\varepsilon}{2K_{\ast}}.
	\end{equation}
	Since $\rho$ is symmetric and $\rho\vert_{\Reals_{\geq 0}}$ is monotonically decreasing, it follows that $\logden$ is symmetric and nonnegative on $\Reals$, and $\logden\vert_{\Reals_{\geq 0}}$ is monotonically increasing, with $\logden(0)=0$.
	It follows for any $0<r\leq r_{\ast}$ and $K \geq K_{\ast}$ that
	\begin{align*}	
	- \log & \frac{\mu^{[1:K]}( \cylBall{h_{[1:K]}}{r}{[1:K]} )}{\mu^{[1:K]}( \cylBall{0_{[1:K]}}{r}{[1:K]} )}
	\\
	&\leq
	- \log \frac{
		V_{r}(h,K_{\ast},K) \, \inf_{v \in \cylBall{0}{r}{[1:K_{\ast}]}} \rho^{[1:K_{\ast}]} (v+h_{[1:K_{\ast}]})
	}
	{
		V_{r}(0,K_{\ast},K) \, \sup_{v \in \cylBall{0}{r}{[1:K_{\ast}]}} \rho^{[1:K_{\ast}]} (v),
	}
	& & \text{by \Cref{lemma:Inequality_for_small_balls_using_Volumes_around_h}}
	\\
	&\leq
	\frac{\varepsilon}{2}
	+
	\sup_{v \in \cylBall{0}{r}{[1:K_{\ast}]}} \sum_{k = 1}^{K_{\ast}} \logden(\gamma_{k}^{-1} (v_{k} + h_{k})) -
	\underbrace{
		\inf_{v \in \cylBall{0}{r}{[1:K_{\ast}]}} \sum_{k = 1}^{K_{\ast}} \logden(\gamma_{k}^{-1} v_{k})
	}_{= \, 0}
	& & \text{by \eqref{equ:lower_bound_on_V_ratio} and \eqref{equ:lower_bound_on_V_ratio_for_Besov_1_2}}
	\\
	&\leq
	\frac{\varepsilon}{2} +
	\sum_{k =1}^{K_{\ast}} \bigl( \logden\bigl(\tilde{h}_{k} \bigr) + \tfrac{\varepsilon}{2K_{\ast}} \bigr)
	& & \text{by \eqref{equ:technical_equation_continuity_q_Step_2}}
	\\
	&\leq
	\logden_{\gamma,m}(h) + \varepsilon,
	\end{align*}
	where $\inf_{v \in \cylBall{0}{r}{[1:K_{\ast}]}} \sum_{k = 1}^{K_{\ast}} \logden(\gamma_{k}^{-1} v_{k})
	= 0$ follows from the nonnegativity of $\logden$ on $\Reals$.
\end{proof}

\setcounter{section}{17}

\section*{Acknowledgements}
\addcontentsline{toc}{section}{Acknowledgements}

BA and TJS are supported in part by the \ac{DFG} through project \href{https://gepris.dfg.de/gepris/projekt/415980428}{415980428}.
Portions of this work were completed during the employment of BA and TJS at the Freie Universit\"at Berlin and while guests of the Zuse Institute Berlin, and during the employment of IK at the Zuse Institute Berlin.
IK and TJS have been supported in part by the \ac{DFG} through projects TrU-2 and EF1-10 of the Berlin Mathematics Research Centre MATH+ (EXC-2046/1, project \href{https://gepris.dfg.de/gepris/projekt/390685689}{390685689}).
\change{The research of HCL has been partially funded by the \ac{DFG} --- Project-ID \href{https://gepris.dfg.de/gepris/projekt/318763901}{318763901} --- SFB1294.}
\change{The authors thank two anonymous peer reviewers for their helpful suggestions.}

\bibliographystyle{abbrvnat}
\bibliography{references}
\addcontentsline{toc}{section}{References}

\end{document}